\numberwithin{equation}{section}    
\theoremstyle{plain}
\newtheorem{thm}{Theorem}[section]
\newtheorem{lem}[thm]{Lemma}
\newtheorem{prop}[thm]{Proposition}
\theoremstyle{definition}
\newtheorem{defn}[thm]{Definition}
\newtheorem{exmp}[thm]{Example}
\theoremstyle{remark}
\newtheorem{rem}[thm]{Remark}
\newtheorem*{rem*}{Remark}
\newtheorem*{ack}{Acknowledgements}
\newcommand{\bs}{\boldsymbol}
\newcommand{\be}{\begin{equation}}    
\newcommand{\ee}{\end{equation}}    
\newcommand{\beu}{\begin{equation*}}    
\newcommand{\eeu}{\end{equation*}}    
\newcommand{\bea}{\begin{eqnarray}}    
\newcommand{\eea}{\end{eqnarray}}    
\newcommand{\beaa}{\begin{eqnarray*}}    
\newcommand{\eeaa}{\end{eqnarray*}}    
\newcommand{\bmx}{\begin{pmatrix}}    
\newcommand{\emx}{\end{pmatrix}}
\newcommand{\del}{\partial}    
\newcommand{\g}{{\mathfrak g}}    
\newcommand{\h}{{\mathfrak h}}
\newcommand{\tpd}[1]{\textstyle{\frac{\del}{\del\smash{ #1}}}}
\newcommand{\pd}[1]{{\frac{\del}{\del\smash{ #1}}}}
\newcommand{\mf}{\mathfrak}
\newcommand{\mc}{\mathcal}
\newcommand{\alf}{{\textstyle{\frac{1}{2}}}}
\newcommand{\nn}{\nonumber}
\newcommand{\8}{{\infty}}
\newcommand{\Z}{{\mathbb Z}}
\newcommand{\C}{{\mathbb C}}
\renewcommand{\P}{{\mathcal P}}
\newcommand{\id}{{\mathrm{id}}}
\newcommand{\uq}{{U_q}}
\newcommand{\uqslt}{{\uq(\mf{sl}_2)}}    
\newcommand{\uqgh}{{\uq(\widehat\g)}}    
\newcommand{\uqlg}{{\uq(\mathcal L\g)}}
\newcommand{\uqg}{{\uq(\g)}}
\newcommand{\goi}[2]{=}    
\newcommand{\Hom}{\mathrm{Hom}}
\newcommand{\on}{.}    
\newcommand{\groth}[1]{{\mathrm{Rep}(#1)}}    
\newcommand{\Cx}{\mathbb C^\times}
\newcommand{\qnum}[1]{\left[ #1\right]_q}
\newcommand{\qbinom}[2]{\binom{#1}{#2}_q}
\newcommand{\btp}{\begin{tikzpicture}[baseline=0pt,scale=0.9,line width=0.25pt]}    
\newcommand{\etp}{\end{tikzpicture}}    
\newcommand{\Roff}{\color{black}}
\renewcommand{\L}{L}
\newcommand{\scr}{\mathscr}
\newcommand{\atp}[1]{}
\newcommand{\path}{\longrightarrow}
\newcommand{\cat}{\mathcal O}
\newcommand{\alg}{\mc A}
\newcommand{\algr}{{\overline{\mc A}}}
\newcommand{\cc}{\mc C}
\newcommand{\algf}[1]{\mathsf{#1}}
\newcommand{\E}{\algf E}
\renewcommand{\H}{\algf H}
\newcommand{\Er}{\overline\E}
\newcommand{\Hr}{\overline\H}
\newcommand{\HHr}{\overline\H}
\newcommand{\K}{\algf K}
\newcommand{\G}{\algf G}
\DeclareMathOperator{\End}{End}
\DeclareMathOperator{\Ob}{Ob}
\author{Charles Young}
\address{\vspace{-.15cm} 
School of Physics, Astronomy and Mathematics, University of Hertfordshire, College Lane, Hatfield AL10 9AB, UK.}  \email{charlesyoung@cantab.net}
\date{July 2012 \\ \emph{Email address:} \texttt{charlesyoung@cantab.net} }
\begin{document} 
\title{Quantum loop algebras and $\ell$-root operators}
\begin{abstract}
Let $\g$ be a simple Lie algebra over $\C$ and $q\in \Cx$ transcendental. We consider the category $\cc_\P$ of finite-dimensional representations of the quantum loop algebra $\uqlg$ in which the poles of all $\ell$-weights belong to specified finite sets $\P$. Given the data $(\g,q,\P)$, we define an algebra $\alg$ whose raising/lowering operators are constructed to act with definite $\ell$-weight (unlike those of $\uqlg$ itself). It is shown that there is a homomorphism $\uqlg\to\alg$ such that 
every representation $V$ in $\cc_P$ is the pull-back of a representation of $\alg$. 
\end{abstract}

\maketitle

\section{Introduction}
Quantum loop algebras and their finite-dimensional representations have been a topic of interest for two decades at least: for a recent review see \cite{CHbeyondKR}. Besides their original setting in integrable quantum- and statistical-mechanical models,
they appear in the contexts of 
algebraic geometry \cite{GV,Nakajima1,VV,Nakajima3}, combinatorics \cite{SchillingE6, LSS}, and cluster algebras \cite{HernandezLeclerc,NakajimaCluster}.


Given $\g$, a simple Lie algebra over $\C$, and $q\in \Cx$ transcendental, let $\uqlg$ be the corresponding quantum loop algebra and $\cc$ the category of its finite-dimensional representations.  
Let $(U^+,U^0,U^-)$ be the triangular decomposition of $\uqlg$ in Drinfeld's ``new'' realization \cite{Drinfeld,Beck}. The subalgebra $U^0\subset \uqlg$ is commutative, and any $V\in \Ob(\cc)$ can be decomposed into a direct sum of generalized eigenspaces of the generators of $U^0$.
The eigenvalues are known as \emph{$\ell$-weights}, and the  \emph{$q$-character} of $V$ is by definition the formal sum of its $\ell$-weights \cite{FR,Knight}. It is usually encoded as a Laurent polynomial $\chi_q(V)$ in formal variables $Y_{i,a}$, where $a\in \Cx$ and where $i$ runs over the set $I$ of nodes of the Dynkin diagram of $\g$. 
If one sends $Y_{i,a}\mapsto y_i:=e^{\omega_i}$ (where $\omega_i$ are the fundamental weights) one recovers the usual formal character $\chi(V)$ of $V$ regarded as a $\uqg$-module. In this sense $q$-characters refine the usual characters, and they have proven to be a powerful tool in understanding the structure of finite-dimensional representations \cite{FM, HernandezKR,HernandezMinAff, MY1,MY2}. 

\medskip

There is a fruitful analogy between the weight lattice $P$ of $\g$, and the so-called \emph{$\ell$-weight lattice} of $\uqlg$, which is defined to be the 
free abelian group $\P$ generated by the $Y_{i,a}$, $i\in I$, $a\in \Cx$.
\emph{Dominant} $\ell$-weights are the monomials in the $Y_{i,a}$, $i\in I$, $a\in \Cx$; they form a free monoid $\P^+\subset \P$. In the literature, dominant $\ell$-weights are usually called Drinfeld polynomials, and one of the first key results concerning $\cc$ was the classification of its irreducibles \cite{CPbook,CP94}:
the isomorphism classes of irreducible modules in $\cc$ are in bijection with the dominant $\ell$-weights. We write $\L(\bs\gamma)$ for the irreducible $\uqlg$-module with \emph{highest $\ell$-weight} $\bs\gamma\in \P^+$. 
Recall the corresponding classical result that the isomorphism classes of irreducible finite-dimensional $\g$-modules, and $\uqg$-modules, are in bijection with the dominant weights $P^+\subset P$; we write $V(\omega)$ for the irreducible $\uqg$-module with highest weight $\omega\in P^+$.
The analogy between the weight theory of $\g$ and its quantum-loop counterpart goes further: there are also counterparts $A_{i,a}\in \P$, $i\in I$, $a\in \Cx$ of the simple roots $\alpha_i\in P$, $i\in I$. 
We call them \emph{simple $\ell$-roots}.
Then, just as $\chi(V(\omega)) \in e^\omega \Z[e^{-\alpha_i}]_{i\in I}$, so also it is known that  $\chi_q(\L(\bs\gamma)) \in \bs\gamma\, \Z[A_{i,a}^{-1}]_{i\in I,a\in \Cx}$ \cite{FM}. 

However, at this stage the analogy with the usual weight theory breaks down, in the following important sense. 
Let $x^\pm_i$ and $k^{\pm 1}_i$, $i\in I$, be the usual Drinfeld-Jimbo generators of $\uqg$. We are accustomed to thinking of $x^\pm_i$ as step-operators between weight subspaces of a weight module $V$ of $\uqg$: if $v\in V$ is a weight vector of weight $\omega$ then $x^\pm_i\on v$ is again a weight vector, of weight $\omega\pm \alpha_i$. This follows, of course, from the defining relations 
\be k_i x_j^{\pm} = q^{\pm B_{ij}} x_j^\pm k_i,\nn\ee 
where $B_{ij}$ is the symmetrized Cartan matrix.
It is natural to ask for something similar in $\cc$: namely, to each simple $\ell$-root $A_{i,a}$ one would like to associate an operator ``$x_{A_{i,a}}^\pm$'' in $U^\pm$  such that for any $V\in \Ob(\cc)$ and any $v\in V$ of $\ell$-weight $\bs\gamma$,  ``$x_{A_{i,a}}^\pm\on v$'' has $\ell$-weight $\bs\gamma A_{i,a}^{\pm 1}$. But this is apparently too much to ask: the relevant commutation relation is
\be\phi^+_i(u)\,  x^\pm_j(v) =  \, \frac{q^{\pm B_{ij}}- uv}{1- q^{\pm B_{ij}}uv} \, x^\pm_j(v) \, \phi^+_i(u),\nn\ee
and $(A_{j,a}^{\pm 1})_i(u) :=  \frac{q^{\pm B_{ij}}- ua}{1- q^{\pm B_{ij}}ua}$ (details are recalled in \S2). Naively therefore, one can only obtain ``$x_{A_{i,a}}^\pm$'' by ``evaluating at $z=a$'' the formal series of generators $x_{i}^\pm(z):= \sum_{r\in \Z} z^{-r} x_{i,r}^\pm$. Such an infinite sum $x_{i}^\pm(a)= \sum_{r\in \Z} a^{-r} x_{i,r}^\pm$ is ill-defined in a sense that is not merely technical: its would-be matrix representatives have singular entries (c.f. equation (\ref{formofxpm}) below).

The lack of such operators ``$x_{A_{i,a}}^\pm$'' in $\uqlg$ is a problem. On the one hand  it makes it hard to use algebraic techniques to prove statements about $q$-characters: for example, the combinatorial Frenkel-Mukhin algorithm \cite{FM} is believed to yield the correct $q$-character for a much larger class of representations than the class for which one can currently prove that it does so. 
At the same time it means that the combinatorial structure the $q$-character -- which is in some sense elegant and sparse, certainly when compared to that of the usual  formal character -- can in practice be hard to lift to the level of representation theory. Ideally, one would like to have a general procedure to go from the $q$-character of a representation to an explicit $\ell$-weight basis, and for that one needs raising/lowering operators adapted to the $\ell$-weight decomposition.

\medskip

In the present work, it is argued that this obstruction 
can be circumvented by working not with $\uqlg$ directly, but rather with a new algebra $\alg$ whose raising/lowering operators act with definite $\ell$-weight by construction. $\alg$ will be defined in such a way that there is a homomorphism of algebras $\uqlg\to \alg$, allowing $\alg$-modules to be pulled back to recover $\uqlg$-modules.

The key observation (Propositions \ref{stepprop} and \ref{idea})  that makes such a construction possible  is the following.
For any finite-dimensional representation, say $V\in \Ob(\cc_\P)$, the matrix representatives $\rho(x_{i}^\pm(z))\in \End(V)[[z,z^{-1}]]$ take a very specific form. Namely,
\be \rho(x_{i}^\pm(z)) = \sum_{a\in \P_i}\sum_{m=0}^M E^\pm_{i,a,m} \frac{a^m}{m!} \left(\pd a\right)^m \delta\left(\frac a z\right),\label{formofxpm}\ee 
for certain maps $E^\pm_{i,a,m}\in \End(V)$. Here the $\P_i$ are finite collections of points in $\Cx$, and the upper limit $M$ is related to the maximal dimension of the $\ell$-weight spaces. Both depend on the choice of representation $V$. The $\delta$-function $\delta(a/z)$ is the formal distribution $\sum_{r\in \Z} (a/z)^r$, and the important point is that the dependence on the mode number $r$ of $x_{i,r}^\pm$ is solely through these formal $\delta$-functions and their derivatives (which, intuitively speaking, have ``support'' at the points $a\in \P_i$). 
It is then natural to ask what algebraic relations are obeyed by the maps $E^\pm_{i,a,m}$. The interesting relations are the ones that are independent of $V$, and, by   abstracting these, we arrive at the definition (\S\ref{algdef}) of $\alg$.

We shall work in subcategories $\cc_\P$ of $\cc$ (see Definition \ref{cpdef}) whose objects have $q$-characters lying in $\Z[Y_{i,a},Y_{i,aq_i^2}^{-1}]_{i\in I,a\in \P_i}$, where $(\P_i)_{i\in I}$ are arbitrary finite subsets of $\Cx$. Given any finite collection of finite-dimensional representations there always exists a choice of $\P$ such that they all belong to $\cc_\P$; to this extent, to study finite-dimensional representations it suffices to study such subcategories. Our definition of $\alg$ depends on $\P$, as well as on $\g$ and $q$.
(Particular categories of this form played a prominent role recently in \cite{HernandezLeclerc}.) 
The main result of the paper, Theorem \ref{mainthm}, is that a homomorphism $\uqlg\to \alg$ exists and that every representation in $\cc_\P$ arises as a pull-back of a representation of $\alg$. 

The $\E^\pm_{i,a,m}$, now to be thought of as abstract generators of $\alg$, are the desired operators of definite $\ell$-weight $A_{i,a}^{\pm 1}$. Meanwhile the ``Cartan'' generators $(\phi_{i,\pm r}^{\pm})_{i\in I,r\in \Z}$ of $U^0$ are mapped, in $\alg$, to generators $\H_{i,a,m}$, whose eigenvalues encode partial fraction decompositions of rational $\ell$-weights.  
The appearance of the extra label $m\in \Z_{\geq 0}$ is closely related to  a second sense in which the analogy between $\ell$-weights and the usual weight theory breaks down: 
whereas the $(k_i)_{i \in I}$ can be simultaneously diagonalized, the $(\phi_{i,\pm r}^{\pm})_{i\in I,r\in \Z}$ cannot; the $\ell$-weight encodes only their generalized eigenvalues with multiplicities. In highest weight representations, further information about their Jordan chains can now be read off from the commutation relations between $\H_{i,a,m}$'s and $\E^-_{i,a,m}$'s, cf. the examples in \S\ref{sl2sec}.

\medskip
Various natural questions about $\alg$ immediately arise and would be interesting to investigate. Here we shall merely note some of them. 
\begin{itemize}
\item One would like to construct a basis for $\alg$. Let $\alg^\pm$ and $\alg^0$ be the subalgebras of $\alg$ generated by the $\E^\pm_{i,a,m}$ and $\H_{i,a,m}$ respectively. In the present paper we establish that $\alg= \alg^-\cdot \alg^0 \cdot \alg^+$ (Proposition \ref{triangprop}). Because, for simplicity, we work with finite sets $\P_i$, the stronger statement $\alg\cong_\C \alg^-\otimes \alg^0 \otimes \alg^+$ is actually false (Proposition \ref{nottriang}), but were one to work instead with sets such that $q^{B_{ij}} \P_i = \P_i$ for all $i,j\in I$, then it is plausible that one would have such a triangular decomposition. Cf. Remark \ref{blockrem}. When $\g\neq \mf{sl}_2$, the form of the Serre relations, cf. Remark \ref{serrerem},  makes even the construction of bases of $\alg^\pm$ an interesting problem. (A basis of $\alg^\pm$ when $\g=\mf{sl}_2$ is given in Proposition \ref{sl2basisprop}.)
\item  It is well known that the untwisted quantum affine algebra $\uqgh$, and hence also $\uqlg$, has,  in addition to the standard Drinfeld-Jimbo coalgebra structure, also a ``Drinfeld current coproduct''. The latter is not a coproduct in a strict sense because it contains (in e.g. $\Delta x_{i}^+(z) = 1\otimes x_i^+(z) + x_i^+(z) \otimes \phi_i^-(1/z)$) formal sums that are ill-defined. See e.g. \cite{HernandezFusion2}. In view of the role of formal $\delta$-functions in the homomorphism $\uqlg\to \alg$, outlined above, it is reasonable to hope that in $\alg$ an analog of this coproduct can be made well-defined.  
\item For simplicity we assume $\g$ is simple but it is known \cite{Jing,Nakajima1,HernandezFusion} that the quantum affinization of $\uqg$ can be defined whenever $\g$ has symmetrizable Cartan matrix (so in particular when $\g$ is an affine Lie algebra). It would be interesting to establish whether the approach of the present paper goes through in that case too.
\item In simply-laced cases, there is a homomorphism from $\uqlg$ to the Grothendieck ring of the category of equivariant coherent sheaves on a certain Steinberg-type variety (endowed with the convolution product) \cite{Nakajima1}. It would be interesting to understand whether this homomorphism factors through (some generalization of) $\alg$. More tentatively, one can hope that $\alg$ allows for an algebraic approach to results that to date rely on geometrical input: notably the algorithm of \cite{Nakajima3} which, in the spirit of the Kazhdan-Lusztig conjecture, gives in principle the $q$-character of every irreducible of $\cc$.
\end{itemize}

\smallskip

This paper is structured as follows. Background results about $\uqlg$ and its finite-dimensional representations are recalled in \S\ref{qcharsec}. Then the first key result, namely the motivating observation -- (\ref{formofxpm}), above --  concerning the action of the raising/lowering operators on $\ell$-weight modules is proved in \S\ref{msec}, which also gives the definition of the categories $\cc_\P$. The algebra $\alg$ itself and main theorem, Theorem \ref{mainthm}, are given in section \S\ref{mainsec}. The proof of Theorem \ref{mainthm} is  in \S\ref{proofsec}. Finally in \S\ref{appssec} we note some first examples and properties of the algebra $\alg$; in particular that it appears to have a natural ``rational limit'', in the spirit of Yangians.

\medskip

\begin{ack} 
The author is very grateful to Evgeny Mukhin and Robin Zegers for useful discussions.
\end{ack}

\section{Background: Quantum Loop Algebras and $\ell$-weights}\label{qcharsec}
\subsection{Cartan data}
Let $\g$ be a simple Lie algebra over $\C$ and $\h$ a Cartan subalgebra of $\g$. We identify $\h$ and $\h^*$ by means of the invariant inner product $\left<\cdot,\cdot\right>$  on $\g$ normalized such that the square length of the maximal root equals 2. 
With $I$ a set of labels of the nodes of the Dynkin diagram of $\g$, let $\{\alpha_i\}_{i\in I}$ be a set of simple roots, and 
$\{\omega_i\}_{i\in   I}$, 
the corresponding set of
fundamental weights.
Let $C=(C_{ij})_{i,j\in I}$ be the Cartan matrix. We have 
\be\nn 2 \left< \alpha_i, \alpha_j\right> = C_{ij} \left<   \alpha_i,\alpha_i\right>,\quad 2 \left< \alpha_i, \omega_j\right> = \delta_{ij}\left<\alpha_i,\alpha_i\right>
.\ee 
Let $r^\vee$ be the maximal number of edges connecting two vertices of the Dynkin diagram of $\g$. Thus $r^\vee=1$ if $\g$ is of types A, D or E, $r^\vee = 2$ for types B, C and F and $r^\vee=3$ for $\mathrm G_2$.  Let $r_i= \alf r^\vee \left<\alpha_i,\alpha_i\right>$. The numbers $(r_i)_{i\in   I}$ are relatively prime integers. We set \be\nn D:= \mathrm{diag}(r_1,\dots,r_N),\qquad B := DC;\ee the latter is the symmetrized Cartan matrix, $B_{ij} = r^\vee \left<\alpha_i,\alpha_j\right>$. 
Let $Q$ (resp. $Q^+$) and $P$ (resp. $P^+$) denote the $\Z$-span (resp. $\Z_{\geq 0}$-span) of the simple roots and fundamental weights respectively. 
 


Fix a transcendental $q \in \Cx$. For each $i\in I$ let \be q_i:= q^{r_i}.\nn\ee 
Define the $q$-numbers, $q$-factorial and $q$-binomial: \be\nn \qnum n := \frac{q^n-q^{-n}}{q-q^{-1}},\quad \qnum n ! := \qnum n \qnum{n-1} \dots \qnum 1,\quad \qbinom n m := \frac{\qnum n !}{\qnum{n-m} ! \qnum m !}.\ee
We use also $\binom n m := \frac{n!}{(n-m)!m!}$ for the usual $(q=1)$ binomial coefficients.

\subsection{Quantum loop algebras}\label{uqlgdef}
The \emph{quantum loop algebra} $\uqlg$ is the associative unital algebra over $\C$ generated by 
\be (x^\pm_{i,n})_{i \in I, n \in \mathbb Z}, \quad\quad (k_i^{\pm 1})_{i \in I}, 
\quad\quad  (h_{i, r})_{i \in I , r \in \mathbb Z_{\neq 0}},\nn\ee
subject to the following relations. 
We arrange the generators into formal series     
\be x^\pm_i(u) := \sum_{n \in \mathbb Z} x^{\pm}_{i,n} u^{-n} \in \uqlg[[u,u^{-1}]]\nn ,\ee    
\be \phi_i^\pm (u) = \sum_{n=0}^\infty  \phi_{i,\pm n}^\pm u^{\pm n} 
:=  k_i^{\pm 1} \exp \left(\pm (q - q^{-1}) \sum_{m=1}^\infty h_{i,\pm m} u^{\pm m} \right )
\in \uqlg[[u^{\pm 1}]] \, , \label{phiu} \ee    
and set    
\be \delta(u) := \sum_{n \in \mathbb Z} u^n \in \C[[u,u^{-1}]].\nn\ee    
The defining relations of $\uqlg$ are then \cite{Drinfeld} $k_ik_i^{-1} = 1$ and
\be    
\left [\phi^\pm_i(u) , \phi^\pm_j (v) \right ] = 0,\quad \left [ \phi^\pm_i (u) , \phi^\mp_j(v) \right ] = 0, \label{phiphirel}\ee    
\be \left [x^+_i(u), x^-_j(v) \right ] = \delta_{ij} \delta\left(\frac u v\right) \frac{ \phi^+_i(1/u) -  \phi^-_i(1/u)}{q_i-q_i^{-1}}, \label{x+x-}\ee    
\bea    
(1- q^{\pm B_{ij}}uv)\phi^+_i(u)\,  x^\pm_j(v) &=&  \, (q^{\pm B_{ij}}- uv) \, x^\pm_j(v) \, \phi^+_i(u) , \label{phiprel}\\    
(1/(uv)- q^{\pm B_{ij}})\phi^-_i(u)\,  x^\pm_j(v) &=&  \, (q^{\pm B_{ij}}/(uv)-1) \, x^\pm_j(v) \, \phi^-_i(u), \label{phimrel}\\    
 \left ( u-q^{\pm B_{ij}}v \right )x^\pm_i(u) \, x^\pm_j(v) &=&     
\left (q^{\pm B_{ij}}u-v \right)\, x^\pm_j(v) \, x^\pm_i(u), \label{x+x+1}\eea    
together with Serre relations
\be\label{Serre} \sum_{\pi\in\Sigma_s}\sum_{r=0}^s(-1)^r
{\binom s r}_{\!q_i} 
x^\pm_{i}(w_{\pi(1)})\ldots
  x^\pm_{i}(w_{\pi(r)})  x^\pm_{j}(z) x^\pm_{i}(w_{\pi(r+1)})\ldots x^\pm_{i}(w_{\pi(s)}) =0,\ee
for all $i\neq j$, where $s=1-C_{ij}$ and $\Sigma_s$ is the symmetric group on $s$ letters.

Relations (\ref{phiprel}--\ref{phimrel}) are often written $[h_{i,n} , x_{j,m}^{\pm}] = \pm \frac{1}{n} [n B_{ij}]_qx_{j,n+m}^{\pm}$; see e.g. \cite{HernandezFusion} for a proof that they are equivalent.

In the present work it is useful have a slightly different presentation of $\uqlg$.
Let us define
\be \Phi_i(u) = \sum_{k\in \Z} \Phi_{i,k} u^k := \frac{\phi_i^+(u) - \phi^-(u)}{q_i-q_i^{-1}} \in \uqlg[[u,u^{-1}]].\label{Phidef}\ee
Here, as already in relation (\ref{x+x-}), the right-hand side is to be interpreted by extending $\phi^\pm_i(u)$ from a series in $\uqlg[[u^{\pm 1}]]$ to one in $\uqlg[[u,u^{-1}]]$ by setting $\phi^\pm_{i,\mp k}=0$ for all $k\in \Z_{\geq 1}$. So (\ref{Phidef}) is equivalent to
\be \Phi_{i,0} := \frac{\phi^+_{i,0} - \phi^-_{i,0}}{q_i-q_i^{-1}} = \frac{k_i - k_i^{-1}}{q_i-q_i^{-1}}, \quad \Phi_{i,\pm k} := \pm\frac{\phi^\pm_{i,\pm k}}{q_i-q_i^{-1}},\quad k\in \Z_{\geq 1}.\label{Phidef2}\ee
\begin{prop}
$\uqlg$ is the associative unital algebra over $\C$ generated by 
\be (x^\pm_{i,n})_{i \in I, n \in \mathbb Z}, \quad\quad (k_i^{\pm 1})_{i \in I}, 
\quad\quad  (\Phi_{i, r})_{i \in I , r \in \Z},\nn\ee
subject to the following relations: 
$k_ik_i^{-1}=1$, 
\be \Phi_{i,0} = \frac{k_i - k_i^{-1}}{q_i-q_i^{-1}},\label{Phidef3}\ee
\be \left[\Phi_i(u),\Phi_j(v)\right] = 0,\quad \left[\Phi_i(u),k_j\right] = 0,\quad k_ik_j =k_j k_i, \label{Phikrel}\ee
\be \left [x^+_i(u), x^-_j(v) \right ] = \delta_{ij} \delta\left(\frac u v\right) \Phi_i\left(\frac 1 u\right), \label{xpxmPhirel}\ee    
\be k_i x^\pm_j(v) = x^\pm_j(v) k_i q^{\pm B_{ij}}, \label{kxpxmrel}\ee
\bea 
(u- q^{\pm B_{ij}}v)\Phi_i(1/u)\,  x^\pm_j(v) &=&  \, (q^{\pm B_{ij}}u- v) \, x^\pm_j(v) \, \Phi_i(1/u) , \label{Phirel}\\    
 \left ( u-q^{\pm B_{ij}}v \right )x^\pm_i(u) \, x^\pm_j(v) &=&     
\left (q^{\pm B_{ij}}u-v \right)\, x^\pm_j(v) \, x^\pm_i(u) \label{xpxmrel}\eea    
together with the Serre relations as above. 
\end{prop}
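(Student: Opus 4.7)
The plan is to exhibit the two presentations as equivalent via an invertible change of the ``Cartan'' generators. On one side we have $(\phi^\pm_{i,\pm r})_{r \geq 0}$, determined by $(k_i^{\pm 1}, h_{i,r})$ through (\ref{phiu}); on the other, $(\Phi_{i,r})_{r \in \Z}$. The change of variables is made explicit by (\ref{Phidef2}) together with the observation that (\ref{Phidef}) has a well-defined inverse: the strictly positive and strictly negative parts in $u$ of $\Phi_i(u)$ determine the non-constant parts of $\phi^+_i(u)$ and $\phi^-_i(u)$ respectively up to the scalar $q_i - q_i^{-1}$, while the constant term $\Phi_{i,0}$ is fixed by (\ref{Phidef3}) in terms of $k_i^{\pm 1}$. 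Since this change of variables is linear and invertible, the proposition reduces to checking that the defining relations on each side imply (and are implied by) those on the other.

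Going from the old presentation to the new, most relations are immediate. Relation (\ref{Phidef3}) is the $u^0$-coefficient of the definition (\ref{Phidef}). Relation (\ref{xpxmPhirel}) is a direct rewriting of (\ref{x+x-}). Relation (\ref{Phikrel}) decomposes into the four sign-cases of (\ref{phiphirel}), together with $[k_i, \phi^\pm_j(u)] = 0$ and $[k_i, k_j] = 0$, all of which follow from (\ref{phiphirel}) at low orders in $u,v$. Relation (\ref{kxpxmrel}) is the constant-term coefficient of either (\ref{phiprel}) or (\ref{phimrel}). The only interesting step is (\ref{Phirel}): substituting $u \mapsto 1/u$ in (\ref{phiprel}) and multiplying through by $u$ yields $(u - q^{\pm B_{ij}} v)\phi^+_i(1/u)\, x^\pm_j(v) = (q^{\pm B_{ij}} u - v)\, x^\pm_j(v)\phi^+_i(1/u)$; the analogous manipulation applied to (\ref{phimrel}), now multiplying through by $v$, yields the identical relation with $\phi^+_i(1/u)$ replaced by $\phi^-_i(1/u)$. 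Subtracting and dividing by $q_i - q_i^{-1}$ gives (\ref{Phirel}).

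For the converse direction, I would define $\phi^\pm_i(u)$ from the $\Phi_{i,r}$ and $k_i^{\pm 1}$ by inverting the change of variables, and then recover the $h_{i,r}$ via the formal logarithm of (\ref{phiu}), which is well-defined because $k_i^{\pm 1}$ is invertible. Each original relation is then obtained by projecting an appropriate new relation onto the positive or negative powers of the relevant formal variable; for example, the strictly negative part of (\ref{Phirel}) in $u$ recovers (\ref{phiprel}) (after substitution $u \mapsto 1/u$ and rescaling), and the strictly positive part recovers (\ref{phimrel}). The main obstacle is purely bookkeeping: the single relation (\ref{Phirel}) simultaneously encodes both (\ref{phiprel}) and (\ref{phimrel}), and one must check that the constant-term contribution of $\Phi_{i,0}$ to $\Phi_i(1/u)$ is correctly accounted for by (\ref{kxpxmrel}) via (\ref{Phidef3}). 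Beyond this, the argument is a routine manipulation of two-sided formal power series, and the Serre relations (\ref{Serre}) along with (\ref{xpxmrel}) carry over unchanged since they involve no Cartan generators.
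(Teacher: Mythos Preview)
Your proposal is correct and follows essentially the same route as the paper: an invertible linear change of Cartan generators, followed by verifying that the relations on each side imply those on the other, with (\ref{Phirel}) obtained by subtracting the suitably rewritten (\ref{phiprel}) and (\ref{phimrel}). The only difference is one of detail in the converse direction: where you sketch the recovery of (\ref{phiprel})--(\ref{phimrel}) by projecting onto positive/negative powers and flag the constant-term bookkeeping, the paper carries this out explicitly mode-by-mode, showing that the boundary case $r=1$ is exactly where (\ref{kxpxmrel}) must be invoked to cancel the $k_i^{-1}$ contribution coming from $\Phi_{i,0}$.
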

\begin{proof} 
  The defining relations (\ref{phiphirel}--\ref{Serre}) involve the $h_{i,r}$ only through the $\phi^\pm_{i,\pm r}$. For every $i\in I$, each $h_{i,\pm r}$ can be expressed in terms of $(\phi^\pm_{i,\pm s})_{s\leq r}$ using (\ref{phiu}) and an induction on $r>0$. So we are free to regard the $(\phi^\pm_{i,\pm r})_{i\in I,r\in \Z_{\geq 1}}$ as generators in place of the $(h_{i,r})_{i\in I,r\in \Z_{\neq 0}}$, and then in turn to replace the $(\phi^\pm_{i,\pm r})_{i\in I,r\in \Z_{\geq 1}}$ by $(\Phi_{i,r})_{i\in I, r\in \Z_{\neq 0}}$ according to (\ref{Phidef2}). For later convenience, we also include $(\Phi_{i,0})_{i\in I}$ as generators; consequently we must impose the linear relation (\ref{Phidef3}) as a defining relation.

It is clear that (\ref{Phikrel}) and (\ref{xpxmPhirel}) are equivalent to, respectively, (\ref{phiphirel}) and (\ref{x+x-}). The relation (\ref{xpxmrel}) is copied verbatim. 
It remains to check that (\ref{kxpxmrel}--\ref{Phirel}) is equivalent to (\ref{phiprel}--\ref{phimrel}). 
The $u^0$ terms of (\ref{phiprel}) and (\ref{phimrel}) are, respectively, the relations involving $k_i^{+1}$ and $k_i^{-1}$ in (\ref{kxpxmrel}). As written, (\ref{phiprel}) and (\ref{phimrel}) are equations of formal series in $\uqlg[[u,v,v^{-1}]]$ and $\uqlg[[u^{-1},v,v^{-1}]]$ respectively.\footnote{Consequently (\ref{phiprel}), for example, is equivalent to $\phi^+_i(u)\,  x^\pm_j(v) =  \, \frac{q^{\pm B_{ij}}- uv}{1- q^{\pm B_{ij}}uv} \, x^\pm_j(v) \, \phi^+_i(u)$, with the understanding that one must expand $\frac{q^{\pm B_{ij}}- uv}{1- q^{\pm B_{ij}}uv}$ for small $u$ and equate powers of $u$. 
} But we may regard them both as equations of formal series in $\uqlg[[u,u^{-1},v,v^{-1}]]$, c.f. the definition (\ref{Phidef}-\ref{Phidef2}) of $\Phi_i(u)$ above, 
\bea    
(1- q^{\pm B_{ij}}uv)\phi^+_i(u)\,  x^\pm_j(v) &=&  \, (q^{\pm B_{ij}}- uv) \, x^\pm_j(v) \, \phi^+_i(u) , \nn\\    
(1- q^{\pm B_{ij}}uv)\phi^-_i(u)\,  x^\pm_j(v) &=&  \, (q^{\pm B_{ij}}-uv) \, x^\pm_j(v) \, \phi^-_i(u) \nn\eea
and subtract one from the other to find 
(\ref{Phirel}).  
Finally, we check that all remaining relations in (\ref{phiprel}--\ref{phimrel}) can be recovered from (\ref{kxpxmrel}) and (\ref{Phirel}). The latter unpacks to give
\be \Phi_{i,r} x^\pm_{j,s} - q^{\pm B_{ij}} \Phi_{i,r-1} x^\pm_{j,s-1} = 
  q^{\pm B_{ij}} x^\pm_{j,s} \Phi_{i,r} - x^\pm_{j,s-1} \Phi_{i,r-1}. \nn\ee
For all $r\in \Z_{\geq 2}$ this is 
\be \phi^+_{i,r} x^\pm_{j,s} - q^{\pm B_{ij}} \phi^+_{i,r-1} x^\pm_{j,s-1} = q^{\pm B_{ij}} x^\pm_{j,s} \phi^+_{i,r} - x^\pm_{j,s-1} \phi^+_{i,r-1},\nn\ee
while for $r=1$ it is
\be \phi^+_{i,1} x^\pm_{j,s} - q^{\pm B_{ij}} (k_i-k_i^{-1}) x^\pm_{j,s-1} = q^{\pm B_{ij}} x^\pm_{j,s} \phi^+_{i,1} - x^\pm_{j,s-1} (k_i-k_i^{-1}),\nn\ee
and given that $q^{\pm B_{ij}} k^{-1}_i x^\pm_{j,s-1} = x^\pm_{j,s-1} k^{-1}_i$ by (\ref{kxpxmrel}) and $k_ik_i^{-1}=1$, this is
\be \phi^+_{i,1} x^\pm_{j,s} - q^{\pm B_{ij}} k_i x^\pm_{j,s-1} = q^{\pm B_{ij}} x^\pm_{j,s} \phi^+_{i,1} - x^\pm_{j,s-1} k_i.\nn\ee
Thus we recover all the relations in (\ref{phiprel}). Similarly, by considering $r\in \Z_{\leq 0}$ we recover all the relations in (\ref{phimrel}).
\end{proof}



\subsection{Rational $\ell$-weights and $q$-characters}\label{lweightssec} 
Let $\cc$ denote the category whose objects are finite-dimensional representations of $\uqlg$ and whose morphisms are $\uqlg$-module maps. 

Every representation $V\in\Ob(\cc)$ is a direct sum of its generalized eigenspaces for the mutually commuting $\phi_{i,r}^\pm$:
\be V = \bigoplus_{\bs\gamma} V_{\bs\gamma}\,\,, \qquad \bs\gamma = (\gamma_{i,\pm r}^\pm)_{i\in I, r\in \Z_{\geq 0}}, \quad \gamma_{i,\pm r}^\pm \in \mathbb C\nn\ee 
where \be V_{\bs\gamma} = \{ v \in V : \exists k \in \mathbb N, \,\, \forall i \in I, m\geq 0 \quad \left(
  \phi_{i,\pm m}^\pm - \gamma_{i,\pm m}^\pm\right)^k \on v = 0 \} \,. \nn\ee
If $\dim (V_{\bs\gamma}) >0$, $\bs\gamma$ is called an \emph{$\ell$-weight} of $V$. Given $v\in V$ we write $v_{\bs \gamma}$ for the component of $v$ in $V_{\bs \gamma}$.
Note that $\gamma_{i,0}^+\gamma_{i,0}^-=1$ necessarily. Let us write the $\ell$-weight $\bs \gamma$ as a formal series
\be \gamma_i^\pm(u) := \sum_{r=0}^\8 u^{\pm r} \gamma_{i,\pm r}^\pm .\nn\ee
Whenever $\bs\gamma$ is an $\ell$-weight of a finite-dimensional representation (and more generally whenever $\bs\gamma$ is an $\ell$-weight of a representation in the larger category $\hat\cat$ of \cite{MY12}) the series $\gamma_i^+(u)$ and $\gamma_i^-(u)$ are the Laurent expansions, about $0$ and $\8$ respectively, of a complex-valued rational function $\gamma_i(u)\in \C(u)$ with the property that $\gamma_i(0)\gamma_i(\8)=1$. Following \cite{MY12}, we call such $\ell$-weights \emph{rational}, and henceforth we shall not distinguish between a rational $\ell$-weight and its corresponding tuple of rational functions. 

Rational $\ell$-weights form a abelian group, the group operation being (component-wise) multiplication of (tuples of) rational functions. 
Let $\mc R$ denote this group and $\Z\mc R$ its integral group ring. 
The $q$-character of a $\uqlg$-module $V$ is by definition  the formal sum
\be \chi_q(V) := \sum_{\bs\gamma\in\mc R} \dim(V_{\bs\gamma}) \bs\gamma \in \Z\mc R.\nn\ee
The $\ell$-weights of finite-dimensional representations actually \cite{FR} belong to the subgroup of $\mc R$ generated by rational $\ell$-weights $Y_{i,a}$, $i\in I$, $a\in \Cx$, defined by
\be\left(Y_{i,a}\right)_i(u) := \,q_i \frac{1 - q_i^{-2}ua}{1-ua}, \qquad \left(Y_{i,a}\right)_j(u) := 1 \text{ for all } j\neq i.  \nn\ee 
The $q$-character of a finite-dimensional representation thus belongs to the ring $\Z[Y_{i,a},Y_{i,a}^{-1}]_{i\in I,a \in \Cx}$ of formal Laurent polynomials.
Moreover, when $\uqlg$ is endowed with the standard Hopf algebra structure, the $q$-character map defines an injective homomorphism of rings \cite{FR} \be\label{chiinj}\chi_q:\groth\cc\to \Z[Y_{i,a},Y_{i,a}^{-1}]_{i\in I,a\in \Cx}\ee from the Grothendieck ring $\groth\cc$. 

An $\ell$-weight in $\Z[Y_{i,a},Y_{i,a}^{-1}]_{i\in I,a \in \Cx}$ is called \emph{dominant} if it is in $\Z[Y_{i,a}]_{i\in I,a\in \Cx}$. 
A vector $v$ of a $\uqlg$-module is an \emph{$\ell$-weight vector} of $\ell$-weight $\bs\gamma$ if $\phi_{i,\pm r}^\pm \on v= v\gamma_{i,\pm r}^\pm$, for all $i\in I$, $r\in \Z_{\geq 0}$, and it is a \emph{highest} $\ell$-weight vector if in addition $x_{i,r}^+ \on v=0$ for all $i\in I$, $r\in \Z$. We write $\L(\bs \gamma)$ for the irreducible quotient of $\uqlg\on v$. It is known \cite{CP94} that the map $\bs\gamma\mapsto \L(\bs\gamma)$ is a bijection from the set of dominant $\ell$-weights to the set of isomorphism classes of irreducible finite-dimensional $\uqlg$-modules, i.e. the simple objects of $\cc$.

For each $j\in I$ and $a\in \Cx$, define $A_{j,a} \in \mc R$ by \be (A_{j,a})_{i}(u) = q^{B_{ji}} \frac{1-q^{-B_{ji}} au}{1-q^{B_{ji}}au}\nn\ee for each $i\in I$. 
We call each $A_{j,a}$ a \emph{simple $l$-root}. 
The $A_{j,a}$ are algebraically independent. (The reader should be warned that in \cite{FR,FM} $A_{j,a}$ was instead labelled $A_{j,aq_j}$.)



\section{Motivation, and the categories $\cc_\P$}\label{msec}
It will be important for us that the raising/lowering operators $x_{i,r}^\pm$ of $\uqlg$ act in a rather specific way in all finite-dimensional representations (actually, in all $\ell$-weight modules). Proposition \ref{stepprop} establishes this property. Then, in \S\ref{catsec}, we introduce the subcategories $\cc_\P$ of $\cc$ that we shall work with throughout the rest of the paper.  
\subsection{Action of the raising/lowering operators between $\ell$-weight spaces} 
\begin{prop}\label{stepprop} Let $V\in \Ob(\cc)$.
Pick and fix any $i\in I$. Let $(\bs \mu,\bs \nu)$ be a pair of $\ell$-weights of $V$ such that 
$x^\pm_{i,r}(V_{\bs\mu}) \cap V_{\bs\nu} \neq \{0\}$ for some $r\in \Z$.  
Then:
\begin{enumerate}[(i)]
\item $\bs\nu  =  \bs \mu A_{i,a}^{\pm 1} $ for some $a\in \Cx$, and moreover 
\item there exist bases  $(v_k)_{1\leq k\leq \dim(V_{\bs \mu})}$ of $V_{\bs \mu}$ and $(w_\ell)_{1\leq \ell\leq \dim(V_{\bs \nu})}$ of $V_{\bs \nu}$, and complex polynomials $P^\pm_{k,\ell}(z)$, with $\deg(P^\pm_{k,\ell}) \leq k+\ell-2$, such that
\be(x^\pm_{i}(z) \on v_k)_{\bs \nu}= \sum_{\ell=1}^{\dim(V_{\bs \nu})} w_\ell P^\pm_{k,\ell}\!\left(\pd a\right) \delta\left(\frac{a}{z}\right).\nn\ee
\end{enumerate}
\end{prop}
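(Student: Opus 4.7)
Plan: I would analyze the matrix of $x_i^\pm(w)$ from $V_{\bs\mu}$ to $V_{\bs\nu}$ using the commutation relation (\ref{Phirel}) (with indices relabelled $i\leftrightarrow j$). First, since the family $\{\Phi_{j,r}\}$ is commutative by (\ref{Phikrel}) and acts on a finite-dimensional space, I would choose bases $(v_k)$ of $V_{\bs\mu}$ and $(w_\ell)$ of $V_{\bs\nu}$ in which every $\Phi_j(1/u)$ simultaneously takes upper-triangular form, with diagonal entries $\mu_j(1/u)$ (resp.\ $\nu_j(1/u)$) and strictly upper-triangular nilpotent parts $N^\mu_j(1/u)$ (resp.\ $N^\nu_j(1/u)$). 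Writing $X^\pm_{\ell k}(w)\in\C[[w,w^{-1}]]$ for the matrix element $v_k\mapsto w_\ell$ of $x_i^\pm(w)$, the $(\ell,k)$-entry of (\ref{Phirel}) rearranges to
\[ f_j^\pm(u,w)\, X^\pm_{\ell k}(w) = (q^{\pm B_{ij}}u - w)\!\!\sum_{m<k} X^\pm_{\ell m}(w)[N^\mu_j(1/u)]_{mk} - (u - q^{\pm B_{ij}}w)\!\!\sum_{m>\ell} [N^\nu_j(1/u)]_{\ell m}X^\pm_{mk}(w), \]
where $f_j^\pm(u,w) := (u - q^{\pm B_{ij}}w)\nu_j(1/u) - (q^{\pm B_{ij}}u - w)\mu_j(1/u)$ is linear in $w$ with rational coefficients in $u$.

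For (i): since $X^\pm(w)\not\equiv 0$ by hypothesis, set $k_0:=\min\{k:\exists\ell,\ X^\pm_{\ell k}\neq 0\}$ and then $\ell_0:=\max\{\ell:X^\pm_{\ell,k_0}\neq 0\}$. Both sums on the right-hand side vanish at $(k,\ell)=(k_0,\ell_0)$, reducing the identity to $f_j^\pm(u,w)\, X^\pm_{\ell_0 k_0}(w) = 0$ for all $j$. Writing $X^\pm_{\ell_0 k_0}(w)=\sum_r c_r w^{-r}$ and matching coefficients of powers of $w$ yields the two-term recursion $c_{r+1}/c_r = a$ for a single constant $a\in\Cx$, independent of $u$ (and, since determined by this one distribution, also independent of $j$). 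Thus $X^\pm_{\ell_0 k_0}(w)=c_0\,\delta(a/w)$, and comparing the resulting ratios $\nu_j(1/u)/\mu_j(1/u)$ against the explicit formula for $(A_{i,a})_j$ gives $\bs\nu = \bs\mu\,A^{\pm 1}_{i,a}$.

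For (ii): part (i) lets me factor $f_j^\pm(u,w) = B_j^\pm(u)(w-a)$, with $B_i^\pm(u)\not\equiv 0$. I would then induct on $\rho(k,\ell):=(k-1)+(\dim V_{\bs\nu}-\ell)$: the base case $\rho=0$ is the extremal pair handled above. For the inductive step, by hypothesis the right-hand side of the displayed identity is a $\C$-linear combination of $(\partial_a)^p\delta(a/w)$ weighted by rational functions of $u$; dividing through by $B_i^\pm(u)$ and inverting $(w-a)$ modulo its one-dimensional kernel $\C\,\delta(a/w)$---using the identity $(w-a)(\partial_a)^m\delta(a/w)=m(\partial_a)^{m-1}\delta(a/w)$---expresses $X^\pm_{\ell k}(w)$ itself as a polynomial in $\partial_a$ acting on $\delta(a/w)$. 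The requirement that the left-hand side be $u$-independent then pins down the polynomial coefficients.

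The hard part will be securing the \emph{sharp} degree bound $\deg P^\pm_{k,\ell}\le k+\ell-2$. A naive propagation of degrees through the above induction yields only $\deg\le k+\dim V_{\bs\nu}-1$, because the contributions from $X^\pm_{mk}(w)$ with $m>\ell$ can reach degree $k+\dim V_{\bs\nu}-2$ before the final inversion of $(w-a)$, which is weaker than the claim whenever $\ell$ is small. Closing the induction on the tighter bound should require choosing $(v_k),(w_\ell)$ adapted not merely to a simultaneous upper-triangularization of the $\Phi_j(1/u)$ but to a finer joint Jordan-chain filtration of the nilpotent families $(N^\mu_j)_{j,r}$ and $(N^\nu_j)_{j,r}$, so that the off-diagonal entries $[N^\mu_j(1/u)]_{mk}$ and $[N^\nu_j(1/u)]_{\ell m}$ vanish to sufficient order when $k-m$ (resp.\ $m-\ell$) is large. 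This refined basis selection, together with the associated degree bookkeeping, is the subtle step of the argument.
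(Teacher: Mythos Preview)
Your argument for part (i) is essentially correct and matches the paper's approach. The gap is in part (ii), and it is precisely the degree bound you flag at the end. However, the fix is much simpler than the ``finer joint Jordan-chain filtration'' you propose.

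The paper's trick is to choose the two bases with \emph{opposite} triangular orientations: take $(v_k)$ so that each $\phi_j^+(u)$ acts \emph{upper}-triangularly on $V_{\bs\mu}$, but take $(w_\ell)$ so that each $\phi_j^+(u)$ acts \emph{lower}-triangularly on $V_{\bs\nu}$. With this asymmetric choice, the analogue of your displayed recursion for $X^\pm_{\ell k}(w)$ involves only entries $X^\pm_{\ell,k'}$ with $k'<k$ and $X^\pm_{\ell',k}$ with $\ell'<\ell$. Both indices strictly decrease, so one inducts directly on $k+\ell$, and the bound $\deg P^\pm_{k,\ell}\le k+\ell-2$ falls out immediately: each application of $(w-a)^{-1}$ raises the degree by one, and at the base $k+\ell=2$ the degree is zero.

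By contrast, your choice (upper-triangular in both spaces) forces the recursion to involve $X^\pm_{m,k}$ with $m>\ell$, so the natural induction parameter becomes $\rho(k,\ell)=(k-1)+(\dim V_{\bs\nu}-\ell)$ rather than $k+\ell$, and you lose the sharp bound. No refined Jordan structure is needed; just flip the orientation on $V_{\bs\nu}$. Note that part (i) still works with this choice: the extremal entry is now the one with both $k$ and $\ell$ minimal, at which both sums are empty.
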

\begin{proof}
Let $(v_k)_{1\leq k\leq \dim V_{\bs \mu}}$ be a basis of $V_{\bs \mu}$ in which the action of the $\phi_{j, r}^+$ is upper-triangular, in the sense that for all $j\in I$ and $1\leq k\leq \dim V_{\bs \mu}$, 
\be(\phi_j^+(u)- \mu_{j}^+(u))\on v_k = \sum_{k'<k} v_{k'} \xi^{+,k,k'}_j(u),\nn \ee
for certain formal series $\xi^{+,k,k'}_j(u) \in u\C[[u]]$. (The leading order is $u^1$: recall that $\phi_{j,0}^\pm=k_j^{\pm 1}$ act diagonally.)
Let $(w_k)_{1\leq k\leq \dim V_{\bs \nu}}$ be a basis of $V_{\bs \nu}$ in which the action of the $\phi_{j,r}^+$ is lower-triangular, in the sense that for all $j\in I$ and $1\leq \ell\leq \dim V_{\bs \nu}$, 
\be\nn(\phi_j^+(u)-\nu_j^+(u))\on w_{\ell} = \sum_{\ell'>\ell} w_{\ell'} \zeta_{j}^{+,\ell,\ell'}(u),\nn\ee
for certain formal series $\zeta^{+,\ell,\ell'}_j(u) \in u\C[[u]]$.

Consider for definiteness the case of $x^+_{i}(z)$ ($x^-_i(z)$ is similar).
For all $1\leq k\leq \dim(V_{\bs \mu})$, 
\be (x_{i}^+(z) \on v_k)_{\bs \nu} = \sum_{\ell=1}^{\dim(V_{\bs \nu})} \lambda_{k,\ell}(z) w_\ell \nn\ee
for some formal series $\lambda_{k,\ell}(z)\in \C[[z,z^{-1}]]$ for each $\ell$, $1\leq \ell \leq \dim(V_{\bs \nu})$.

By (\ref{phiprel}), we have 
\be (q^{B_{ij}}-uz) x^+_i(z) \left(\phi^+_j(u) - \mu^+_j(u) \right) \on v_k = \left( (1-q^{B_{ij}}uz)\phi^+_j(u) -  (q^{B_{ij}}-uz)  \mu^+_j(u) \right) x^+_i(z)\on v_k\nn\ee
and, on resolving this equation in the basis above of $V_{\bs \nu}$ and taking the $w_{\ell}$ component,
\begin{align}(q^{B_{ij}}-uz)\sum_{k'=1}^{k-1} \xi^{+,k,k'}_j(u) \lambda_{k',\ell}(z) &= \left( (1-q^{B_{ij}}uz)\nu^+_j(u) -  (q^{B_{ij}}-uz) \mu^+_j(u) \right) \lambda_{k,\ell}(z) \nn\\
&\phantom = + (1-q^{B_{ij}}uz) \sum_{\ell'=1}^{\ell-1} \lambda_{k,\ell'}(z) \zeta_j^{+,\ell',\ell}(u) 
\label{XAeqn}.\end{align}
Suppose $(x_{i}^+(z) \on V_{\bs \mu})_{\bs \nu} \neq 0$. Then there is a smallest $K$ such that $(x_{i}^+(z) \on v_K)_{\bs \nu} \neq 0$ and then a smallest $L$ such that $\lambda_{K,L}(z)\neq 0$. So (\ref{XAeqn}) gives, in particular,
\be 0= \left( (1-q^{B_{ij}}uz)\nu^+_j(u) -  (q^{B_{ij}}-uz)\mu^+_j(u) \right) \lambda_{K,L}(z) 
\label{Xeqn}.\ee
This must hold for all $j\in I$. For each $j\in I$, (\ref{Xeqn}) is an equation of the form $0=\lambda_{K,L}(v) \sum_{n=0}^\8 u^n (b^{(i)}_n + c^{(i)}_nv)$  for the formal series $\lambda_{K,L}(v)\in \C[[v,v^{-1}]]$,  with $b^{(i)}_n,c^{(i)}_n\in \C$ for all $n\in\Z_{\geq 0}$. 
There are non-zero solutions if and only if there is an $a\in\Cx$ such that $b^{(i)}_n/c^{(i)}_n=-a$ for all $n\in \Z_{\geq 0}$ and all $j\in I$. That is,
\be \nu^+_j(u) \left(\mu^+_j(u)\right)^{-1} = q^{B_{ij}} \frac{1-q^{-B_{ij}} u a}{1-q^{B_{ij}}u a} =: (A_{i,a})_j(u)\nn\ee
as an equality of power series in $u$. 

This establishes (i), and equation (\ref{XAeqn}) then rearranges to give
\begin{align} \frac{(1-q^{2B_{ij}})\mu^+_j(u)u}{1-q^{B_{ij}}ua} (z-a)\lambda_{k,\ell}(z) &= (q^{B_{ij}}-uz)\sum_{k'=1}^{k-1} \xi^{+,k,k'}_j(u) \lambda_{k',\ell}(z)  \nn\\&\phantom = - (1-q^{B_{ij}}uz) \sum_{\ell'=1}^{\ell-1} \lambda_{k,\ell'}(z) \zeta_j^{+,\ell',\ell}(u)
\label{XBeqn}.\end{align}
Part (ii) now follows from (\ref{XBeqn}) by an induction on $k+\ell$: in  the base case $k+\ell=2$, the right hand side of equation (\ref{XBeqn}) is zero, so the equation for $\lambda_{1,1}$ is $ (z-a) \lambda_{1,1}(z)=0$,
whose solutions are $\lambda_{1,1}(z) = P_{1,1} \delta(a/z)$, with $P_{1,1}\in \C$. 
For the inductive step, pick $(k,\ell)$ and suppose (ii) is true for all $(k',\ell')$ such that $k'+\ell'<k+\ell$. Consider $\lambda_{k,\ell}$.
It is enough to consider the equation obtained by taking the leading power of $u$ in (\ref{XBeqn}), which is $u^1$. This equation is
\be (1-q^{2B_{ij}})\mu_{j,0}^+ (z-a)\lambda_{k,\ell}(z) =q^{B_{ij}}\sum_{k'=1}^{k-1} \lambda_{k',\ell}(z) \xi_{j,1}^{+,k,k'} -\sum_{\ell'=1}^{\ell-1} \lambda_{k,\ell'}(z) \zeta_{j,1}^{+,\ell',\ell}  ,\nn\ee
and every solution is of the form $\lambda_{k,\ell}(z) = P_{k,\ell}( \pd a) \delta(\frac{a}{z})$ with 
\be \nn\deg(P_{k,\ell}) \leq 1+\max_{\substack{k'<k\\\ell'<\ell}} \left(\deg(P_{k',\ell'})\right).\ee (To see this, note that $(z-a)\delta(\frac z a) = 0$,  $(z-a) \pd a \delta(\frac{a}{z}) = \delta(\frac{z}{a})$, and more generally for all $m\in\Z_{\geq 1}$, $(z-a)\frac 1 {m!}( \pd a)^m \delta(\frac z a) = \frac 1{(m-1)!}(\pd a)^{m-1}  \delta(\frac za)$.) This completes the inductive step.
\end{proof}
\begin{rem*} Part (i) in the above was proved in \cite{MY12}. \end{rem*}

\subsection{Categories $\cc_\P$ of finite dimensional representations}\label{catsec}
To get more control over the $\ell$-roots $A_{i,a}$ that can appear in Proposition \ref{stepprop}, it is useful to work not with the category $\cc$ of \emph{all} finite dimensional representations of $\uqlg$, but rather with smaller categories in which only rational $\ell$-weights with poles at certain prescribed points are allowed, as follows.

\begin{defn}\label{cpdef} Pick, for each $i\in I$, a set $\P_i\subset \Cx$ consisting of finitely many (pairwise distinct) points. Given such a tuple $\P= (\P_i)_{i\in I}$, let $\cc_\P$ denote the full subcategory of $\cc$ such that for any  $\ell$-weight $\bs\gamma= (\gamma_i(u))_{i\in I}$ of any representation $V\in \Ob(\cc_\P)$, and for each $i\in I$, the rational function $\gamma_i(u)$ has no poles lying outside the set $\{a^{-1}: a\in \P_i\}$.
\end{defn}

Equivalently, $\cc_\P$ is the full subcategory of $\cc$ whose objects have $q$-characters in $\Z[Y_{i,a},Y_{i,aq_i^{2}}^{-1}]_{i\in I, a\in \P_i}$. 

The following lemma shows that the definition could also be phrased in terms of the allowed $\ell$-roots $A_{i,a}$. (This is not trivial, since $(A^{\pm 1}_{i,a})_j(u)$ has no pole at $a^{-1}$.)
\begin{lem}\label{ainP} Let $V$ be a representation in $\cc_\P$ and $A_{i,a}$ a simple $\ell$-root. If $\bs \mu$ and $\bs\mu A_{i,a}^{-1}$ are both $\ell$-weights of $V$ then $a\in \P_i$. 
\end{lem}
\begin{proof}
This follows from Theorem 5.1 of \cite{FM}, which states that $\chi_q(\cc)$ is contained in (in fact, is equal to) the intersection $\bigcap_{i\in I} K_i$ of the sets $K_i := \Z[Y_{j,a}^{\pm 1}]_{j\neq i; a\in \Cx}\otimes \Z[Y_{i,b} + Y_{i,b} A_{i,b}^{-1}]_{b\in \Cx}$.
\end{proof}

By (\ref{chiinj}), $\cc_{\P}$ is closed under taking submodules, quotients, finite direct sums (and tensor products, when $\cc$ is made into a tensor category using the standard Hopf algebra structure on $\uqlg$). 

\begin{rem} The categories $\mathscr{C}_\ell$ of the paper \cite{HernandezLeclerc} are of this form. (In \cite{HernandezLeclerc}, the definition of $\mathscr{C}_{\ell}$ is given in terms of the allowed highest $\ell$-weights.)\end{rem}


\begin{rem}
Roughly speaking, the choices of $\P$ for which the categories $\cc_\P$ are non-trivial are those in which the $\P_i$ contain sufficiently long subsequences of geometric progressions in $q$. 
In much of \S\ref{appssec} we specialize to choices of the form $\P_i = \{aq^k:k\in \Z, 0\leq k \leq K\}$ for all $i\in I$, with $a\in \Cx$ and $K\in \Z_{\geq 0}$ fixed. 
The reader may find it helpful to keep such a choice of $\P$ in mind throughout.
\end{rem}

\begin{rem}
\label{blockrem}
The restriction to finite sets of allowed poles is for technical simplicity in what follows. Suitably modified, the main arguments should go through for countable sets without accumulation points in $\Cx$. Of particular interest are the blocks of the category $\cc$. In simply-laced cases these are of the form $\cc_\P$ with $\P_i = aq^{2\Z+ c(i)}$ for some $a\in \Cx$, where $c:I\to \{0,1\}$ is a two-colouring of the Dynkin diagram \cite{CMblocks}. In the non-simply-laced cases the blocks are still of the form $\cc_\P$ for certain countable sets $\P_i$, $i\in I$.
\end{rem}


The following is essentially a corollary of Proposition \ref{stepprop}.

\begin{prop}\label{idea} Let $V\in \Ob(\cc_{\P})$, with $\rho:\uqlg\to \End(V)$ the representation homomorphism.  Let $M=2\max_{\bs\mu}\dim(V_{\bs\mu})$. Then there exist linear maps $E^\pm_{i,a,m}$ and $H_{i,a,m}$ in $\End(V)$, for each $i\in I$, $a\in \P_i$ and $0\leq m\leq M$, such that
\begin{align} \rho(x^\pm_i(z))  &= \sum_{a\in \P_i}\sum_{m=0}^M E^\pm_{i,a,m} \frac{a^m}{m!} \left(\pd a\right)^m \delta\left(\frac a z\right)  , \label{Emap}\\
 \rho(\Phi_{i}(1/z))  &= \sum_{a\in \P_i}\sum_{m=0}^M H_{i,a,m}\frac{a^m}{m!} \left(\pd a\right)^m \delta\left(\frac a z\right)  .\label{Hmap}\end{align}
Moreover, $E_{i,a,m}^\pm(V_{\bs\mu}) \subseteq V_{\bs\mu A_{i,a}^{\pm 1}}$ (and 
$H_{i,a,m}(V_{\bs\mu}) \subseteq V_{\bs\mu}$) for each $\ell$-weight $\bs\mu$ of $V$. 
\end{prop}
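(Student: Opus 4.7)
The plan is to extract both decompositions from the block structure of $\rho$ with respect to the $\ell$-weight grading $V=\bigoplus_{\bs\mu}V_{\bs\mu}$. For the raising and lowering currents, let $\iota_{\bs\mu}:V_{\bs\mu}\hookrightarrow V$ and $\pi_{\bs\nu}:V\twoheadrightarrow V_{\bs\nu}$ denote the inclusion and projection. Proposition \ref{stepprop}(i) tells us that the block $\pi_{\bs\nu}\rho(x^\pm_i(z))\iota_{\bs\mu}$ is nonzero only when $\bs\nu=\bs\mu A_{i,a}^{\pm1}$ for some $a\in\Cx$, and part (ii) tells us that in suitable bases $(v_k)$ of $V_{\bs\mu}$ and $(w_\ell)$ of $V_{\bs\nu}$ this block equals $\sum_{k,\ell}(w_\ell\otimes v_k^\vee)\,P^{\pm}_{k,\ell}(\pd a)\delta(a/z)$ with $\deg P^{\pm}_{k,\ell}\le k+\ell-2\le M-2$. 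Since $a\in\Cx$ is invertible, the rewriting $(\pd a)^n=(n!/a^n)(a^n/n!)(\pd a)^n$ expresses each $P^{\pm}_{k,\ell}(\pd a)\delta(a/z)$ as a $\C$-linear combination $\sum_{m=0}^{M}e^\pm_{k,\ell,m}\cdot(a^m/m!)(\pd a)^m\delta(a/z)$.

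Next I would verify that the $a$ appearing above lies in $\P_i$. Comparing $(A_{i,a})_i(u)=q_i^{2}(1-q_i^{-2}au)/(1-q_i^{2}au)$ with $(Y_{i,b})_i(u)=q_i(1-q_i^{-2}ub)/(1-ub)$ produces a factorisation $A_{i,a}=Y_{i,a}\,Y_{i,aq_i^{2}}\cdot\prod_{j\neq i}Y_{j,\cdot}^{\cdot}$ (the last factor depending on $B_{ji}$) in which the factor $Y_{i,a}^{\pm1}$ always appears. Since $\bs\mu$ and $\bs\nu=\bs\mu A_{i,a}^{\pm1}$ both belong to $\Z[Y_{j,b}^{\pm1}]_{j\in I,\,b\in\P_j}$ by the $\cc_\P$ hypothesis, so does their ratio $A_{i,a}^{\pm1}$, which forces $a\in\P_i$. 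Assembling the block contributions by setting $E^\pm_{i,a,m}:=\sum_{\bs\mu}\sum_{k,\ell}e^\pm_{k,\ell,m}(w_\ell\otimes v_k^\vee)$, where the outer sum runs over those $\bs\mu$ for which $\bs\mu A_{i,a}^{\pm1}$ is an $\ell$-weight of $V$, then yields (\ref{Emap}) together with the grading $E^\pm_{i,a,m}(V_{\bs\mu})\subseteq V_{\bs\mu A_{i,a}^{\pm1}}$.

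For $\rho(\Phi_i(1/z))$ I would first observe that $\Phi_{i,k}\in U^0$ preserves each $V_{\bs\mu}$, so the claim $H_{i,a,m}(V_{\bs\mu})\subseteq V_{\bs\mu}$ is immediate. On $V_{\bs\mu}$ the operators $\rho(\phi_i^\pm(u))$ act by (Jordan-)triangular matrices whose diagonal entries are the Laurent expansions at $0,\infty$ of the rational function $\gamma_i(u)$ attached to $\bs\mu$; one expects, and needs, the off-diagonal (nilpotent) entries also to be expansions of rational functions in $u$ with poles only in $\{1/b:b\in\P_i\}$ and orders at most $\dim V_{\bs\mu}\le M/2$. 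Granted this, partial-fraction decomposition of each matrix entry combined with the identity $\tfrac{1}{(1-bu)^{m+1}}|_{0}-\tfrac{1}{(1-bu)^{m+1}}|_{\infty}\in\Span\bigl\{(b^{m'}/m'!)(\pd b)^{m'}\delta(bu)\bigr\}_{m'=0}^{m}$ (readily checked by matching Laurent coefficients) produces, upon setting $u=1/z$ and dividing by $q_i-q_i^{-1}$, endomorphisms $H_{i,a,m}$ realising (\ref{Hmap}). The main obstacle I anticipate is precisely the bracketed pole-control statement: the $\cc_\P$ hypothesis directly constrains only the generalised eigenvalue $\gamma_i(u)$, not the entire matrix; propagating the pole restriction to the off-diagonal entries will likely require exploiting the commutation relation (\ref{Phirel}) between $\Phi_i$ and the $x^\pm_j$ together with the rationality of the matrix coefficients of $\phi_i^\pm$ on finite-dimensional modules as used in \cite{MY12}.
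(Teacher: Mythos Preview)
Your treatment of the $E^\pm_{i,a,m}$ is essentially the paper's: block-decompose with respect to the $\ell$-weight grading, invoke Proposition~\ref{stepprop}, and read off the coefficients of the derivatives of $\delta$. Your argument that $a\in\P_i$ is a bit more explicit than the paper's one-line ``since $V\in\Ob(\cc_\P)$, it must be that $a\in\P_i$'', but amounts to the same thing.

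Where you diverge is in the construction of the $H_{i,a,m}$. You propose to analyse the full (Jordan-triangular) matrix of $\rho(\phi_i^\pm(u))$ on each $V_{\bs\mu}$, establish that every entry --- not just the diagonal one --- is the expansion of a rational function with poles only at $\{1/b:b\in\P_i\}$ of controlled order, and then partial-fraction. As you yourself flag, this pole-control on the off-diagonal entries is not immediate from the $\cc_\P$ hypothesis and would require additional work. The paper sidesteps this entirely by a short algebraic trick: from relation~(\ref{xpxmPhirel}) one has $\Phi_i(1/u)=[x_i^+(u),x_{i,0}^-]$, so
\[
\rho(\Phi_i(1/u))=\sum_{a,b\in\P_i}\sum_{m=0}^{M}\frac{a^m}{m!}\Bigl(\pd a\Bigr)^m\delta\Bigl(\frac a u\Bigr)\,[E^+_{i,a,m},E^-_{i,b,0}],
\]
and since $[E^+_{i,a,m},E^-_{i,b,0}]$ shifts $\ell$-weight by $A_{i,a}A_{i,b}^{-1}$ while $\Phi_i$ preserves $\ell$-weight spaces, the $a\neq b$ terms must cancel. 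One simply sets $H_{i,a,m}:=[E^+_{i,a,m},E^-_{i,a,0}]$. This derives the $H$-decomposition directly from the already-established $E$-decomposition, with no separate rationality or pole analysis needed. Your route could in principle be completed, but the commutator argument is both shorter and avoids the obstacle you identified.
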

\begin{proof}

Pick an $i\in I$.
 
We define the required maps $E^\pm_{i,a,m}$ by giving their restrictions to $\Hom(V_{\bs\mu,},V_{\bs\nu})$ for each pair of $\ell$-weights $\bs\mu$ and $\bs\nu$ of $V$. If the restriction of $\rho(x^\pm_{i}(z))$ to $\Hom(V_{\bs\mu,},V_{\bs\nu})[[z,z^{-1}]]$ is zero then we define the restriction of $E^\pm_{i,a,m}$ to $\Hom(V_{\bs\mu,},V_{\bs\nu})$ to be zero too, for each $a\in \P_i$. Otherwise, we use Proposition \ref{stepprop}. Part (i) asserts that there is an $a\in \Cx$ such that $\bs\nu=\bs\mu A_{i,a}^{\pm 1}$. By Lemma \ref{ainP}, $a\in \P_i$.
Suppose, in part (ii) of Proposition \ref{stepprop}, that $P^\pm_{k,\ell}(x) = \sum_{m=0}^{k+\ell-2} P^\pm_{k,\ell,m} \frac{a^m}{m!}x^m$ and set $P^\pm_{k,\ell,m}=0$ for all $m> k+\ell-2$. Then the restriction of $E^\pm_{i,a,m}$ to $\Hom(V_{\bs\mu},V_{\bs\nu})$ is given by  
\be (E_{i,a,m}^\pm\on v_k)_{\bs\nu} = \sum_{\ell=1}^{\dim(V_{\bs\nu})} P^\pm_{k,\ell,m} w_\ell\nn,\ee
while for all $b\in \P_i\setminus\{a\}$ the restriction of $E^\pm_{i,b,m}$ to $\Hom(V_{\bs\mu},V_{\bs\nu})$ is zero. 

Note that $E_{i,a,m}^\pm(V_{\bs\mu}) \subseteq V_{\bs\mu A_{i,a}^{\pm 1}}$ by construction.

Next we show that the maps $H_{i,a,m}$ exist too. 
From (\ref{Emap}) we have in particular that $\rho(x_{i,0}^-) = \sum_{a\in\P_i} E_{i,a,0}^-$. It follows from relation (\ref{xpxmPhirel}) that $[x_i^+(u),x_{i,0}^-] = \Phi_i(1/u)$. Thus, given (\ref{Emap}), we have
\be \rho(\Phi_i(1/u)) = \sum_{a,b\in \P_i} \sum_{m=0}^M  \left(\frac{a^m}{m!} \left(\pd a\right)^m \delta\left(\frac a u\right) \right) [E^+_{i,a,m},E^-_{i,b,0}].\nn\ee
Note that $[E^+_{i,a,m},E^-_{i,b,n}](V_{\bs\mu}) \subset V_{\bs\mu A_{i,a} A_{i,b}^{-1}}$ for all $\ell$-weights $\bs\mu$ of $V$, whereas by definition of $\ell$-weight, $\rho(\Phi_i(1/u))(V_{\bs\mu})\subseteq V_{\bs\mu}$.  So the terms with $a\neq b$ on the right must sum to zero and can be dropped. Hence indeed
\be \rho(\Phi_i(1/u)) = \sum_{a,b\in \P_i} \sum_{m=0}^M  \left(\frac{a^m}{m!} \left(\pd a\right)^m \delta\left(\frac a u\right) \right) H_{i,a,m}\nn\ee
as required, where we define $H_{i,a,m} := [E^+_{i,a,m},E^-_{i,a,0}]\in \End(V)$. \end{proof}

\section{The algebra $\alg$ and statement of main result}\label{mainsec}
We first define the algebra $\alg$ that will appear in the statement of Theorem \ref{mainthm} below. The defining relations are given in \S\ref{algdef}, and we then need to take a certain completion, as discussed in \S\ref{completion}.

\subsection{Defining relations of $\alg$}
\label{algdef} 
Let $\P=(\P_i)_{i\in I}$ be a tuple as in Definition \ref{cpdef}. We define an algebra $\alg$, depending on this choice of $\P$, and also on $\g$ and $q$, as follows. 
Let $\alg$ be the associative unital algebra over $\C$ generated by \be \K_i^{\pm 1},\quad \E^\pm_{i,a,m}, \quad \H_{i,a,m}, \quad i\in I,\, a\in \P_i,\, m\in \Z_{\geq 0},\nn \ee
subject to the following relations. 
For all $i,j\in I$, $a\in \P_i$, $b\in \P_j$ and $m,n\in \Z_{\geq 0}$,
\be \K_i\K_i^{-1} = 1, \quad \K_i \E^\pm_{j,b,m} = \E^\pm_{j,b,m} q^{\pm B_{ij}} \K_i, \quad \K_i \K_j = \K_j \K_i\ee\be  \K_i \H_{j,b,m} = \H_{j,b,m} \K_i,\quad \H_{i,a,m} \H_{j,b,n} = \H_{j,b,n} \H_{i,a,m} \label{Krels}\ee
\be \left[ \E^+_{i,a,m}, \E^-_{j,b,n} \right] = \delta_{ij} \delta_{a,b} \H_{i,a,m+n}\label{EpEmrel} \ee
\bea
&&(a-b q^{\pm B_{ij}}) \E^\pm_{i,a,m} \E^\pm_{j,b,n} + a\E^\pm_{i,a,m+1} \E^\pm_{j,b,n} - bq^{\pm B_{ij}} \E^\pm_{i,a,m} \E^\pm_{j,b,n+1} \nn\\
&=& (aq^{\pm B_{ij}}-b ) \E^\pm_{j,b,n} \E^\pm_{i,a,m}  + aq^{\pm B_{ij}} \E^\pm_{j,b,n} \E^\pm_{i,a,m+1}  -  b\E^\pm_{j,b,n+1} \E^\pm_{i,a,m} \label{EErel}
\eea
\bea
&&(a-b q^{\pm B_{ij}}) \H_{i,a,m} \E^\pm_{j,b,n} + a\H_{i,a,m+1} \E^\pm_{j,b,n} - bq^{\pm B_{ij}} \H_{i,a,m} \E^\pm_{j,b,n+1} \nn\\
&=& (aq^{\pm B_{ij}}-b ) \E^\pm_{j,b,n} \H_{i,a,m}  + aq^{\pm B_{ij}} \E^\pm_{j,b,n} \H_{i,a,m+1}  -  b\E^\pm_{j,b,n+1} \H_{i,a,m} \label{EHrel}
\eea
\be \sum_{a\in \P_i} \H_{i,a,0} = \frac{\K_i-\K_i^{-1}}{q_i-q_i^{-1}}\label{HKrel}.\ee
In addition, for each $i\neq j$, 
set $s=1-C_{ij}$; then for all $m_1,\dots,m_s,n\in \Z_{\geq 0}$ and all $a\in \P_j$ such that $\{aq^{\mp B_{ij} \mp (t-1) B_{ii}}: 1\leq t\leq s\}\subset \P_i$, we impose the relation
\be\begin{split} \sum_{r=0}^{s}(-1)^r
{\binom s r}_{\!q_i} 
\E^\pm_{i,aq^{\mp B_{ij} \mp r B_{ii}},m_{r+1}}\ldots
\E^\pm_{i,aq^{\mp B_{ij} \mp (s-1) B_{ii}},m_s}  \E^\pm_{j,a,n} \E^\pm_{i,aq^{\mp B_{ij}},m_1} \ldots \E^\pm_{i,aq^{\mp B_{ij}  \mp (r-1) B_{ii} },m_{r}} =0.\label{cycleser}\end{split}\ee

\begin{rem}\label{serrerem} The relations (\ref{cycleser}) are analogs of the Serre relations. In contrast to  (\ref{Serre}), they do not involve a sum over the symmetric group $\Sigma_s$. It is interesting to note that they take the same form as the Serre relations for the finite-type quantum group, but ``decorated'' with shifts in ``special position'', cf. \S\ref{sticksec} below.
For clarity, let us list their explicit form for $\E^-$, case-by-case: 
\bea
\begin{tikzpicture}[baseline =-5,scale=.8] 
\filldraw[fill=white] (1,0) circle (1mm) node [below] {$i$};    
\filldraw[fill=white] (2,0) circle (1mm) node [below] {$j$};    
\end{tikzpicture}
&:& 0= \E^-_{j,b,n} \E^-_{i,b,m} - \E^-_{i,b,m} \E^-_{j,b,n}    \nn\\\nn\\
\begin{tikzpicture}[baseline =-5,scale=.8] 
\draw[thick] (1,0) -- (2,0);
\filldraw[fill=white] (1,0) circle (1mm) node [below] {$i$};    
\filldraw[fill=white] (2,0) circle (1mm)  node [below] {$j$};    
\end{tikzpicture}
&:& 0= \E^-_{j,b,n} \E^-_{i,bq^{-1},m_1} \E^-_{i,bq,m_2} - [2]_q
\E^-_{i,bq,m_2} \E^-_{j,b,n} \E^-_{i,bq^{-1},m_1}+ 
 \E^-_{i,bq^{-1},m_1} \E^-_{i,bq,m_2} \E^-_{j,b,n}     \nn\\\nn\\
\begin{tikzpicture}[baseline =-5,scale=.8] 
\draw[double,thick] (1,0) -- (2,0);
\draw[thick] (1.6,.2) -- (1.4,0) -- (1.6,-.2);
\filldraw[fill=white] (1,0) circle (1mm) node [below] {$i$};    
\filldraw[fill=white] (2,0) circle (1mm)  node [below] {$j$};    
\end{tikzpicture}
&:& 0= \E^-_{j,b,n} \E^-_{i,bq^{-2},m_1} \E^-_{i,b,m_2} \E^-_{i,bq^2,m_3} 
    -[3]_q \E^-_{i,bq^2,m_3}  \E^-_{j,b,n} \E^-_{i,bq^{-2},m_1} \E^-_{i,b,m_2} \nn\\
&& \qquad    +[3]_q \E^-_{i,b,m_2} \E^-_{i,bq^2,m_3}  \E^-_{j,b,n} \E^-_{i,bq^{-2},m_1}
    -  \E^-_{i,bq^{-2},m_1} \E^-_{i,b,m_2} \E^-_{i,bq^2,m_3}  \E^-_{j,b,n} 
\nn\\\nn\\
&& 0= \E^-_{i,b,n} \E^-_{j,bq^{-2},m_1} \E^-_{j,bq^2,m_2} - [2]_{q^2}
\E^-_{j,bq^2,m_2} \E^-_{i,b,n} \E^-_{j,bq^{-2},m_1}+ 
 \E^-_{i,bq^{-2},m_1} \E^-_{i,bq^2,m_2} \E^-_{j,b,n} \nn   \eea\bea
\begin{tikzpicture}[baseline =-5,scale=.8] 
\draw[thick] (1,0.075) -- (2,0.075);\draw[thick] (1,-0.075) -- (2,-0.075);\draw[thick] (1,0) -- (2,0);
\draw[thick] (1.6,.2) -- (1.4,0) -- (1.6,-.2);
\filldraw[fill=white] (1,0) circle (1mm) node [below] {$i$};    
\filldraw[fill=white] (2,0) circle (1mm)  node [below] {$j$};    
\end{tikzpicture}
&:& 0= \E^-_{j,b,n} \E^-_{i,bq^{-3},m_1} \E^-_{i,bq^{-1},m_2} \E^-_{i,bq^1,m_3} \E^-_{i,bq^3,m_4}
   - [4]_q \E^-_{i,bq^3,m_4} \E^-_{j,b,n} \E^-_{i,bq^{-3},m_1} \E^-_{i,bq^{-1},m_2} \E^-_{i,bq^1,m_3} \nn\\ &&\qquad
 \!\!\!\!\!\!+ {\binom 4 2}_q \E^-_{i,bq^1,m_3} \E^-_{i,bq^3,m_4} \E^-_{j,b,n} \E^-_{i,bq^{-3},m_1} \E^-_{i,bq^{-1},m_2}  
  - [4]_q  \E^-_{i,bq^{-1},m_2} \E^-_{i,bq^1,m_3} \E^-_{i,bq^3,m_4} \E^-_{j,b,n} \E^-_{i,bq^{-3},m_1}
  \nn\\ &&\qquad\quad +  \E^-_{i,bq^{-3},m_1} \E^-_{i,bq^{-1},m_2} \E^-_{i,bq^1,m_3} \E^-_{i,bq^3,m_4} \E^-_{j,b,n} 
\nn\\\nn\\
&&
0= \E^-_{i,b,n} \E^-_{j,bq^{-3},m_1} \E^-_{j,bq^3,m_2} - [2]_{q^3}
\E^-_{j,bq^3,m_2} \E^-_{i,b,n} \E^-_{j,bq^{-3},m_1}+ 
 \E^-_{i,bq^{-3},m_1} \E^-_{i,bq^3,m_2} \E^-_{j,b,n}. \nn   
\eea
\end{rem}

\subsection{Completion of $\alg$}\label{completion} 
Let $\mc I_0 := \alg$. For each $m\in \Z_{>0}$, let $\mc I_m\subset \alg$ be the two-sided ideal generated by $\E^+_{i,a,n}$, $\E^-_{i,a,n}$ and $\H_{i,a,n}$, $i\in I, a\in \P_i$ and $n\geq m$. These ideals form a strictly decreasing sequence
\be \alg = \mc I_0  \supset \mc I_1  \supset \mc I_2 \supset \dots .\nn\ee

\begin{exmp}\label{spes1} Suppose $a\in \P_i$, $b\in \P_j$.  By relation (\ref{EErel}), 
\bea \E^-_{i,a,0} \E^-_{j,aq^{-B_{ij}},0}  &=& \frac{1}{1-q^{- 2B_{ij}}} \Bigl(- \E^-_{i,a,1} \E^-_{j,aq^{-B_{ij}},0} + q^{-2B_{ij}} \E^-_{i,a,0} \E^-_{j,aq^{-B_{ij}},1} \nn\\
&&\quad\qquad\qquad + q^{- B_{ij}} \E^-_{j,aq^{-B_{ij}},0} \E^-_{i,a,1}  - q^{-B_{ij}} \E^-_{j,aq^{-B_{ij}},1} \E^-_{i,a,0}\Bigr). \nn\eea 
Thus $\E^-_{i,a,0} \E^-_{j,b,0}\in \mc I_1$ if $aq^{-B_{ij}} = b$.
Note that $\E^-_{i,a,0} \E^-_{j,aq^{-B_{ij}},0}\notin \mc I_2$, because the third and final terms of the right-hand side cannot be further re-written in this way (although the first two terms can).
We also have (when $i=j$) that $\E^-_{i,a,0} \E^-_{i,a,0}\in\mc I_1$, because relation (\ref{EErel}) yields
\be \E^-_{i,a,0} \E^-_{i,a,0} = \frac{1}{1-q^{-B_{ii}}} \left( q^{-B_{ii}} \E^-_{i,a,0} \E^-_{i,a,1} - \E^-_{i,a,1} \E^-_{i,a,0} \right) .\ee
\end{exmp}
More generally, inspecting  (\ref{EErel}) and (\ref{EHrel}) one sees that for any finite linear combination $S$ of monomials in the generators of $\alg$, if $S\neq 0$ then there is an $n\in \Z_{\geq 1}$ such that $S\notin \mc I_n$. That is, we have 
\begin{lem} $\bigcap_{n=1}^\8 \mc I_n = \{0\}$. 
\qed \end{lem}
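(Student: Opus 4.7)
The plan is to formalize the author's ``inspection'' remark by tracking total $m$-degree. Write $\alg = \mathcal F / \mathcal I$ where $\mathcal F$ is the free unital associative $\C$-algebra on the generators $\K_i^{\pm 1}$, $\E^\pm_{i,a,m}$, $\H_{i,a,m}$ and $\mathcal I$ is the two-sided ideal of defining relations. Grade $\mathcal F$ by total $m$-degree, with $\deg(\K_i^{\pm 1}) = 0$ and $\deg(\E^\pm_{i,a,m}) = \deg(\H_{i,a,m}) = m$, extended multiplicatively to monomials. For $y \in \mathcal F$ let $d(y)$ denote the minimum total $m$-degree of monomials of $y$, and set $\mathcal F_{\geq n} := \{y \in \mathcal F : d(y) \geq n\}$. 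By construction $F_n(\alg)$ is the image of $\mathcal F_{\geq n}$, so the lemma is equivalent to $\bigcap_{n \geq 1}(\mathcal I + \mathcal F_{\geq n}) = \mathcal I$.

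The first step is the inspection: all defining relations other than (\ref{EErel}) and (\ref{EHrel}) are homogeneous in total $m$-degree, while (\ref{EErel}) and (\ref{EHrel}) split as a minimum-degree $q$-commutator in degree $m+n$ plus a subleading piece one degree higher. Consequently each defining relation $\rho$ has a well-defined, nonzero leading part $\mathrm{lt}(\rho)$. The central goal is then to establish the isomorphism $\mathrm{gr}_{F_\bullet}\alg \cong \mathcal F / \langle \mathrm{lt}(\rho) \rangle_\rho$ of graded algebras. One clean route is to introduce a $\C[t]$-algebra $\alg_t$ whose relations are obtained from those of $\alg$ by multiplying each minimum-degree term of (\ref{EErel}) and (\ref{EHrel}) by $t$; assigning $\deg(t) = 1$ makes every relation of $\alg_t$ homogeneous, so $\alg_t$ is naturally graded, with specializations $\alg_t|_{t=1} \cong \alg$ and $\alg_t|_{t=0} \cong \mathcal F / \langle \mathrm{lt}(\rho) \rangle$.

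Given the identification $\mathrm{gr}_{F_\bullet}\alg \cong \mathcal F / \langle \mathrm{lt}(\rho) \rangle$, the lemma is immediate: for nonzero $x \in \alg$, pick a lift $y \in \mathcal F$ with minimal $d(y)$; then $\mathrm{lt}(y) \neq 0$ in $\mathcal F / \langle \mathrm{lt}(\rho)\rangle$ by minimality (otherwise one could subtract a combination of defining relations and obtain a lift of strictly larger $d$-value), so the image of $x$ in $F_{d(y)}(\alg)/F_{d(y)+1}(\alg)$ is nonzero, giving $x \notin F_{d(y)+1}(\alg)$. The main obstacle is proving the $\C[t]$-flatness (equivalently, $(t{-}1)$-torsion-freeness) of $\alg_t$, which is precisely what justifies the specialization-based computation of the associated graded. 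This reduces to a PBW-type bound on $\mathcal F / \langle \mathrm{lt}(\rho) \rangle$, showing that no ``accidental'' relations arise beyond those imposed. The leading $q$-commutator forms of (\ref{EErel}) and (\ref{EHrel}), together with the already homogeneous $\K$, $\H\E$, and Serre-type relations, should render this bound accessible via a standard diamond-lemma or normal-form argument, though the non-simply-laced cases will require careful combinatorial bookkeeping.
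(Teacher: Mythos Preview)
The paper's own ``proof'' is a bare assertion (the sentence before the \qed), so there is little to compare against; your attempt to actually supply an argument is commendable. However, the deduction you give from the associated-graded identification is circular. You choose a lift $y$ of $x$ with \emph{minimal} $d(y)$ and then argue that if $\mathrm{lt}(y)\in\langle\mathrm{lt}(\rho)\rangle$ one could produce a lift of strictly \emph{larger} $d$-value --- but that in no way contradicts minimality. What you need is a lift of \emph{maximal} $d$-value, and the existence of such a lift is exactly the Hausdorff property you are trying to prove. More structurally: knowing $\mathrm{gr}_{F_\bullet}\alg$ (or equivalently $\mathrm{lt}(\mathcal I)$) does not by itself bound $\bigcap_n F_n$; the associated graded is blind to that intersection. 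Flatness of your $\alg_t$ over $\C[t]$ is likewise insufficient on its own (consider the trivial filtration $F_n=\alg$ for all $n$: the Rees algebra is $\alg[t]$, flat, yet $\bigcap F_n=\alg$). Incidentally, your description of $\alg_t$ has the $t$-scaling on the wrong terms: to get $\alg_t|_{t=0}\cong\mathcal F/\langle\mathrm{lt}(\rho)\rangle$ you must multiply the \emph{higher}-degree terms of (\ref{EErel}) and (\ref{EHrel}) by $t$, not the leading ones.

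What would close the gap is precisely what you flag as the ``main obstacle'': a normal-form/PBW result. Concretely, one wants a set $B\subset\mathcal F$ of monomials whose images form a basis of $\mathcal F/\langle\mathrm{lt}(\rho)\rangle$ \emph{and} of $\alg$. Given that, for $0\neq x=\sum_b c_b\bar b\in\alg$ let $n=\min\{d(b):c_b\neq 0\}$; if $x\in F_{n+1}$ then $\sum_{d(b)=n}c_b b$ would lie in $\mathrm{lt}(\mathcal I)=\langle\mathrm{lt}(\rho)\rangle$, contradicting linear independence of $B$ there. In type $A_1$ the paper essentially supplies this via Proposition~\ref{sl2basisprop}; in higher rank, as you note, the Serre-type relations (\ref{cycleser}) make the diamond-lemma verification genuinely nontrivial, and the paper does not carry it out. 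So your overall plan is the right one, but as written it is a sketch with a misstatement in the final step and the central PBW bound left open.
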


By virtue of this we can, in what follows, work with infinite linear combinations of the form
\be S=\sum_{m=0}^\8 u_m, \qquad u_m\in \mc I_m.\nn\ee
Such a sum is to be interpreted as the sequence of partial sums 
\be S=\left(S_0,S_1,S_2,\dots\right),\qquad S_M = \sum_{m=0}^M u_m,\nn \ee
and we declare that $S=S'$ if and only if $S_M \equiv S'_M \mod \mc I_{M+1}$ for all $M\in \Z_{\geq 0}$.

More formally, the decreasing sequence of ideals $(\mc I_m)_{m\geq 0}$ induces a topology on $\alg$: namely the topology in which for each $x\in \alg$, the sets $\{ x+ \mc I_m :m\in \Z_{\geq 0}\}$ form a base of the open sets containing $x$. This topology is Hausdorff by the above lemma. And henceforth we work in the completion $\lim\limits_{\longleftarrow} \alg/\mc I_n$ of $\alg$ with respect to this topology. By a slight abuse, we continue to write $\alg$ for this completion.

\subsection{Main result}
We now state the main result of the paper. 
\begin{thm}\label{mainthm}
There is a homomorphism of algebras $\theta_{\P}:\uqlg\to \alg$ defined by
$k_i\mapsto \K_i$ and
\begin{align} x^\pm_{i}(z) &\mapsto \sum_{a\in \P_i} \sum_{m\in \Z_{\geq 0}} \E^\pm_{i,a,m} \frac{a^m}{m!}  \left(\pd a\right)^m \delta\left(\frac{a}{z}\right) ,\nn\\
\Phi_i(1/z) &\mapsto \sum_{a\in \P_i}\sum_{m\in \Z_{\geq 0}} \H_{i,a,m}\frac{a^m}{m!} \left(\pd a\right)^m \delta\left(\frac{a}{z}\right)\nn .\end{align}
Moreover, every $V\in \Ob(\cc_\P)$ is the pull-back by $\theta_\P$ of a finite-dimensional representation of $\alg$. 
\end{thm}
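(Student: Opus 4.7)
The plan is to handle both parts in parallel, since each is essentially a direction of the same coefficient-matching computation between power-series relations in $\uqlg$ and relations among the generators/operators of $\alg$. I would work with the alternate presentation of $\uqlg$ from the Proposition in \S\ref{uqlgdef}. Expanding $\delta(a/z) = \sum_{r \in \Z} a^r z^{-r}$ and reading off the coefficient of $z^{-n}$, the displayed formulas give $\theta_\P(x^\pm_{i,n}) = \sum_{a \in \P_i} \sum_{m \geq 0} \binom{n}{m} a^n \E^\pm_{i,a,m}$ and similarly for $\Phi_{i,n}$; since $\E^\pm_{i,a,m}, \H_{i,a,m} \in F_m(\alg)$, these sums converge in the completion, so $\theta_\P$ is well-defined on generators. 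For part (a) one must verify that each defining relation of $\uqlg$ holds after $\theta_\P$; for part (b), Proposition \ref{idea} supplies operators $E^\pm_{i,a,m}, H_{i,a,m} \in \End(V)$, and one must check that sending the abstract generators of $\alg$ to these operators (with $\K_i \mapsto \rho(k_i)$) respects the defining relations of $\alg$. The pull-back assertion is then automatic, since both verifications come from the same coefficient match.

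Both tasks reduce to formal-distribution calculations. Substituting the delta-function decomposition and using the identities
\[ u\, (\partial_a)^m \delta(a/u) = a\, (\partial_a)^m \delta(a/u) + m\, (\partial_a)^{m-1}\delta(a/u), \qquad \delta(u/v)\, f(u) = \delta(u/v)\, f(v), \]
one rewrites each side of a $\uqlg$ relation as a linear combination of the family $\{(\partial_a)^m \delta(a/u)\,(\partial_b)^n \delta(b/v)\}_{a \in \P_i,\, b \in \P_j,\, m,n \geq 0}$; this family is linearly independent, so equating coefficients yields exactly the defining relations of $\alg$. The Cartan relations translate directly into (\ref{Krels})--(\ref{HKrel}). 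The bracket (\ref{xpxmPhirel}) matches (\ref{EpEmrel}), with the Kronecker $\delta_{a,b}$ on the $\alg$ side arising because any $\delta(u/v)$-factor forces $a = b$ after combining $\delta(a/u)\delta(b/v)$ via the second identity above. The relations (\ref{Phirel}) and (\ref{xpxmrel}), both of the form $(u - q^{\pm B_{ij}} v)\, A(u) B(v) = (q^{\pm B_{ij}} u - v)\, B(v) A(u)$, translate into (\ref{EHrel}) and (\ref{EErel}) after using the first identity to absorb the polynomial prefactor into shifts $m \mapsto m+1$ at the generator level.

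The main obstacle is the Serre relations: the $\uqlg$ relation (\ref{Serre}) is fully symmetrized over $\Sigma_s$, while its analog (\ref{cycleser}) in $\alg$ is only a cyclic sum of $s+1$ terms. I would exploit the $\ell$-weight grading here: expanding (\ref{Serre}) in delta functions, the only nontrivial coefficient equations arise at the configurations $(w_k, z) = (aq^{\mp B_{ij} \mp (k-1) B_{ii}}, a)$ demanded by the $\ell$-weight shifts $A_{j,a} \prod_t A_{i, a q^{\mp B_{ij} \mp (t-1) B_{ii}}}$ produced by the monomials in the Serre sum. Permutations $\pi \in \Sigma_s$ that do not respect the natural ordering of these points contribute terms that pair-cancel via the $i = j$ case of (\ref{EErel}), viewed as a quadratic exchange relation between the $\E^\pm_{i,\cdot,\cdot}$'s, and the surviving contributions assemble into the cyclic sum (\ref{cycleser}). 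I expect this symmetric-to-cyclic reduction to be the most calculation-heavy part of the argument; once completed, parts (a) and (b) both follow from the same coefficient match, and the pull-back claim is immediate.
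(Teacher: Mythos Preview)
Your overall architecture is correct and matches the paper's: verify the non-Serre relations by the delta-function identities you wrote down, and for part (b) use Proposition \ref{idea} together with linear independence of the family $\{(\partial_a)^m\delta(a/u)\}_{a\in\P_i,\,m\ge0}$ to read back the $\alg$-relations. (The paper actually proves that linear independence via a Vandermonde-type determinant, Lemma \ref{finlem}; you should not leave it as an assertion, since it is the crux of the ``moreover'' direction.)

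The genuine gap is in your treatment of the Serre relations. Your claim that ``the only nontrivial coefficient equations arise at the configurations $(w_k,z)=(aq^{\mp B_{ij}\mp(k-1)B_{ii}},a)$ demanded by the $\ell$-weight shifts'' is not a valid argument at the level of the abstract algebra $\alg$: when you expand $\theta_\P$ applied to (\ref{Serre}) in delta functions, you get a coefficient equation for \emph{every} tuple $(a_1,\dots,a_s,b)\in\P_i^s\times\P_j$, and each must vanish in $\alg$ itself, not merely in representations where $\ell$-weight bookkeeping is available. Moreover, the mechanism by which the non-cyclic configurations vanish is not a pairwise cancellation from the $i=j$ case of (\ref{EErel}) as you suggest. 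What the paper does (Propositions \ref{mostlyautoprop} and \ref{cyclicserresufficeprop}) is: introduce a ``sticking graph'' on $\{(i,a_1),\dots,(i,a_s),(j,b)\}$; when this graph is acyclic, every monomial in the symmetrized Serre sum can be straightened, via Proposition \ref{reEprop}, to a single reference monomial, and the resulting scalar coefficient is shown to vanish by a nontrivial polynomial identity due to Jing (not by pair-cancellation); only when the graph is the full directed cycle does one need to impose an extra relation, and there the $\Sigma_s$-symmetrized sum reduces to the cyclic sum (\ref{cycleser}) because the rational ``straightening factors'' $\mathscr D_\pi$ have zero constant term for $\pi\ne\id$. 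Your sketch does not identify Jing's identity, and without it the ``symmetric-to-cyclic'' reduction you describe does not go through.
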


\section{Proof of Theorem \ref{mainthm}}\label{proofsec}
This section is devoted to the proof of Theorem \ref{mainthm}. We first re-express the defining relations of $\alg$ in terms of formal generating series in \S\ref{gser}. Then, much of the section is devoted to showing that the Serre relations (\ref{cycleser}) are equivalent to the, a priori stronger, relations (\ref{autser2}) below. This is done in Propositions \ref{mostlyautoprop} and  \ref{cyclicserresufficeprop}, and relies on the use of an identity due to Jing \cite{Jing}. With these preparations complete, Theorem \ref{mainthm} is proved in \S\ref{mainproof}: the existence of the homomorphism in Proposition \ref{thetadef}, and the ``moreover'' part in Proposition \ref{pullbackprop}.   

\subsection{Defining relations expressed through formal series}\label{gser}
For each $\G\in \{\E^+,\E^-,\H\}$, $i\in I$ and $a\in \P_i$, let us introduce a generating series 
\be \G_{i,a}(z) := \sum_{m=0}^\8 \frac{z^m}{m!} \G_{i,a,m} \in \alg[[z]].\label{Gseriesdef}\ee
Given any formal series $c\left(\pd z\right) = \sum_{k=0}^\8 c_k \left(\pd z\right)^k\in \C[[\pd z]]$, the product $c\left(\pd z\right) \G_{i,a}(z)$ is well-defined because for each $m\in \Z$ the coefficient of $z^m$ is a well-defined infinite sum, cf \S\ref{completion}:
\be c\left(\pd z\right) \G_{i,a}(z) = 
 \sum_{m=0}^\8 \frac {z^m}{m!} \sum_{n=0}^\8 c_n \G_{i,a,m+n}.\nn\ee  
More generally, for any formal series $c(\pd{z_1},\dots,\pd{z_K}) = \sum_{p,r\in \Z} c_{p_1,\dots,p_K}\prod_{k=1}^K \left(\pd{z_k}\right)^{p_k} \in \C[[\pd{z_1},\dots,\pd{z_K}]]$, products of the form $c(\pd{z_1},\dots,\pd{z_K}) \G^{(1)}_{i_1,a_1}(z_1)\dots \G^{(K)}_{i_K,a_K}(z_K)$ with each $\G^{(k)}\in \{\E^+,\E^-,\H\}$ are well-defined.

The defining relations (\ref{EpEmrel}--\ref{EHrel}) are equivalent to\footnote{It is perhaps interesting to note that if instead of (\ref{Gseriesdef}) one chose to work with geometric  generating series $\G_{i,a}(z) := \sum_{m=0}^\8 z^m  \G_{i,a,m}$ then the first of these relations would read 
\be \left[\E^+_{i,a}(z), \E^-_{j,b}(w) \right] = 
 \delta_{ij} \delta_{a,b} \frac{z H_{i,a}(z) - wH_{i,a}(w)}{z-w}.\nn\ee
}
\be \left[ \E^+_{i,a}(z), \E^-_{j,b}(w) \right] = \delta_{ij} \delta_{a,b} \H_{i,a}(z+w).\nn\ee
\bea \left( a \left(1 + \pd z\right) - bq^{\pm B_{ij}} \left( 1 + \pd w\right) \right) \E^\pm_{i,a}(z) \E^\pm_{j,b}(w) &=&
\left( a q^{\pm B_{ij}} \left(1 + \pd z\right) - b \left( 1 + \pd w\right) \right) \E^\pm_{j,b}(w) \E^\pm_{i,a}(z).\nn\\
\left( a \left(1 + \pd z\right) - bq^{\pm B_{ij}} \left( 1 + \pd w\right) \right) \H_{i,a}(z) \E^\pm_{j,b}(w) &=&
\left( a q^{\pm B_{ij}} \left(1 + \pd z\right) - b \left( 1 + \pd w\right) \right) \E^\pm_{j,b}(w) \H_{i,a}(z).\nn\eea
Moreover we have the following.
\begin{prop}\label{reEprop}
Whenever $a\neq bq^{\pm B_{ij}}$,
\be  \E^\pm_{i,a}(z) \E^\pm_{j,b}(w)
=
\frac{ a q^{\pm B_{ij}} \left(1 + \pd z\right) - b \left( 1 + \pd w\right) }{ a \left(1 + \pd z\right) - bq^{\pm B_{ij}} \left( 1 + \pd w\right) } 
\E^\pm_{j,b}(w) \E^\pm_{i,a}(z) \nn
\ee
\be  \H_{i,a}(z) \E^\pm_{j,b}(w)
= \frac{ a q^{\pm B_{ij}} \left(1 + \pd z\right) - b \left( 1 + \pd w\right) }{ a \left(1 + \pd z\right) - bq^{\pm B_{ij}} \left( 1 + \pd w\right) } \E^\pm_{j,b}(w) \H_{i,a}(z) 
\nn\ee
(where $\frac{ a q^{\pm B_{ij}} \left(1 + \pd z\right) - b \left( 1 + \pd w\right) }{ a \left(1 + \pd z\right) - bq^{\pm B_{ij}} \left( 1 + \pd w\right) }$ is to be interpreted as a formal series in $\pd z$ and $\pd w$ by regarding them as small and expanding).
\end{prop}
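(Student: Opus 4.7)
The plan is to simply ``divide through'' by the differential operator $L := a(1+\pd z) - bq^{\pm B_{ij}}(1+\pd w)$ in the relations immediately preceding the proposition. The substance of the argument is to check that the resulting expressions are well-defined in the completed algebra.

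First I observe that $L$, viewed as an element of $\C[[\pd z, \pd w]]$, has constant term $a - bq^{\pm B_{ij}}$, which is nonzero by assumption. Hence $L$ is invertible in $\C[[\pd z, \pd w]]$, with inverse given by the geometric series
\be L^{-1} = \frac{1}{a-bq^{\pm B_{ij}}} \sum_{k=0}^\8 \left( - \frac{a \pd z - bq^{\pm B_{ij}} \pd w}{a-bq^{\pm B_{ij}}} \right)^k. \nn\ee

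Next I need to check that $L^{-1}$ acts in a well-defined way on products of the generating series $\G_{i,a}(z), \G'_{j,b}(w) \in \alg[[z,w]]$ appearing in the two relations, where $\G, \G' \in \{\E^+,\E^-,\H\}$. By definition (\ref{Gseriesdef}), the coefficient of $z^m w^n$ in $\G_{i,a}(z) \G'_{j,b}(w)$ lies in $F_m(\alg) \cdot F_n(\alg) \subset F_{m+n}(\alg)$. Applying $\pd z$ sends the coefficient of $z^m$ in $\G_{i,a}(z)$ to (a multiple of) $\G_{i,a,m+1}\in F_{m+1}(\alg)$, and similarly for $\pd w$. Consequently, the $k$-th term in the geometric series for $L^{-1}$ applied to $\G_{i,a}(z)\G'_{j,b}(w)$ contributes to the coefficient of $z^m w^n$ an element of $F_{m+n+k}(\alg)$. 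Thus, modulo $F_N(\alg)$ only finitely many terms contribute, and the infinite sum converges in the filtration topology of \S\ref{completion}. This justifies the parenthetical remark in the statement of the proposition.

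Finally I multiply both sides of the first preceding relation by $L^{-1}$ on the left to obtain
\be \E^\pm_{i,a}(z)\E^\pm_{j,b}(w) = L^{-1} \bigl( aq^{\pm B_{ij}}(1+\pd z) - b(1+\pd w) \bigr) \E^\pm_{j,b}(w) \E^\pm_{i,a}(z), \nn\ee
and similarly for $\H_{i,a}(z) \E^\pm_{j,b}(w)$, which are the stated identities. I expect no genuine obstacle: the only point requiring care is the convergence check above, which follows cleanly from the filtration structure already established in \S\ref{completion}.
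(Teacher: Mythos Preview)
Your argument is correct and is essentially the same as the paper's, just packaged at the level of generating series rather than mode-by-mode. The paper proceeds by an explicit induction on a truncation parameter $P$: it repeatedly applies (\ref{EErel}) to $\E^\pm_{i,a,m}\E^\pm_{j,b,n}$, producing a finite expansion in $\E^\pm_{j,b,n+r}\E^\pm_{i,a,m+p-r}$ with closed-form coefficients $C(p,r)=(-1)^{r+1}\binom{p}{r}\frac{(bq^{\pm B_{ij}})^r a^{p-r}}{(bq^{\pm B_{ij}}-a)^p}$ plus a remainder in $F_{m+n+P+1}(\alg)$, and then lets $P\to\infty$. Your geometric-series inversion of $L$ is exactly the same recursion viewed abstractly; the paper's version has the minor advantage of recording the coefficients explicitly, while yours makes the convergence argument cleaner and avoids the bookkeeping.
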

\begin{proof}
Let $C(p,r) = (-1)^{r+1}\binom p r\frac{(bq^{\pm B_{ij}})^r a^{(p-r)}  }{(bq^{\pm B_{ij}}-a)^{p}}$.
By induction, we find that for all $P\in \Z_{\geq 0}$,
\bea \E^\pm_{i,a,m} \E^\pm_{j,b,n}  &=&  \E^\pm_{j,b,n}\E^\pm_{i,a,m}\frac{b-a q^{\pm B_{ij}}}{bq^{\pm B_{ij}}-a }\nn\\
&&{} + \sum_{p=1}^P \sum_{r=0}^p \E^\pm_{j,b,n+r} \E^\pm_{i,a,m+p-r}   \frac{q^{B_{ij}}-q^{-B_{ij}}}{bq^{\pm B_{ij}}-a} \frac{ (p-r)bq^{\pm B_{ij}}+ra}{p}  C(p,r) \nn\\ 
  &&{}    +\sum_{r=0}^{P+1} \E^\pm_{j,b,n+r} \E^\pm_{i,a,m+P+1-r}  \frac{(P+1-r)q^{B_{ij}} + rq^{-B_{ij}}}{P+1}  C(P+1,r) \nn\\ 
&&{} - \sum_{r=0}^{P+1} \E^\pm_{i,a,m+P+1-r} \E^\pm_{j,b,n+r}   C(P+1,r) .\nn\eea
The final two lines belong to $\mc I_{P+1}$. Therefore
\bea \E^\pm_{i,a,m} \E^\pm_{j,b,n} &=& \E^\pm_{j,b,n}\E^\pm_{i,a,m}\frac{b-a q^{\pm B_{ij}}}{bq^{\pm B_{ij}}-a }\nn\\
&&{} + \sum_{p=1}^\8 \sum_{r=0}^p \E^\pm_{j,b,n+r} \E^\pm_{i,a,m+p-r} \frac{q^{B_{ij}}-q^{-B_{ij}}}{bq^{\pm B_{ij}}-a} \frac{ (p-r)bq^{\pm B_{ij}}+ra}{p}  C(p,r) \nn\eea
and hence the result.
\end{proof}
\Roff
\subsection{Sticking graphs and the Serre relations}\label{sticksec}
The ``whenever'' condition in Proposition \ref{reEprop} motivates the following definition.
We say $(i,a)$ \emph{sticks to the left of} $(j,b)$ if $a=bq^{-B_{ij}}$. 
Given a multiset $V$ of elements of $I\times \Cx$, define the \emph{sticking graph} of $V$ to be the directed graph $(V,E)$ with vertex set $V$ and directed edges 
\be  E=\{ (i,a) \to (j,b): (i,a) \text{ sticks to the left of } (j,b) \}.\ee 

Now, and for the remainder of this subsection, pick and fix $i,j\in I$ such that $i\neq j$.
We shall treat the Serre relations involving the nodes $i$ and $j$ of the Dynkin diagram.
Let $\Sigma_s$ be the symmetric group on $s:=1-C_{ij}$ letters, and let $a_1,a_2,\dots,a_s\in \Cx$ and $b\in \Cx$. 

\begin{lem}\label{cyclem}
If there is a $\sigma\in \Sigma_s$ such that for each $t\in \{1,\dots,s\}$,
\be a_{\sigma(t)} = bq^{B_{ij}+ (t-1) B_{ii}},\nn\ee
then the sticking graph of $\left\{ (i,a_1), (i,a_2), \dots, (i,a_s), (j,b)\right\}$ is the directed cycle graph
\be\begin{tikzpicture} \matrix (m) [matrix of math nodes, row sep=3em,    
column sep=4em, text height=2ex, text depth=1ex]    
{  &  (j,b) &    \\    
 (i,a_{\sigma(1)})  & \dots & (i,a_{\sigma(s)}). \\  };    
\path[->,font=\scriptsize]    
(m-1-2) edge (m-2-1)    
(m-2-1) edge (m-2-2)    
(m-2-2) edge (m-2-3)
(m-2-3) edge (m-1-2);
\end{tikzpicture}\nn\ee
Otherwise, it has no directed cycles. 
\end{lem}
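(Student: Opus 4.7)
The plan is to analyze exhaustively which edges can occur among the vertices $\{(i,a_1),\dots,(i,a_s),(j,b)\}$ and then classify all directed cycles.

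First I would enumerate the types of candidate edges. Since $B_{jj}=2r_j>0$ and $q$ is not a root of unity, there is no self-loop $(j,b)\to(j,b)$; similarly $a_t=a_t q^{-B_{ii}}$ is impossible, so no self-loops on the $(i,a_t)$ vertices either. The remaining edges are:
\begin{itemize}
\item $(j,b)\to(i,a_t)$, which requires $a_t=bq^{-B_{ji}}=bq^{B_{ij}}$;
\item $(i,a_t)\to(j,b)$, which requires $a_t=bq^{-B_{ij}}$;
\item $(i,a_t)\to(i,a_{t'})$ (with $t\neq t'$), which requires $a_{t'}=a_t q^{B_{ii}}$.
\end{itemize}
The next step is to observe that since $q$ is transcendental and $B_{ii}=2r_i>0$, the values $\{bq^{B_{ij}+(\ell-1)B_{ii}}:\ell\in\Z\}$ are pairwise distinct, so whenever one of these equalities is realised by an index $t$, that index is unique.

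For the "if" direction, given the permutation $\sigma$ with $a_{\sigma(t)}=bq^{B_{ij}+(t-1)B_{ii}}$, I would verify edge-by-edge that the claimed cycle arises: the edge $(j,b)\to(i,a_{\sigma(1)})$ from $t=1$; the edges $(i,a_{\sigma(t)})\to(i,a_{\sigma(t+1)})$ since $a_{\sigma(t+1)}/a_{\sigma(t)}=q^{B_{ii}}$; and the closing edge $(i,a_{\sigma(s)})\to(j,b)$ because $B_{ij}+(s-1)B_{ii}=B_{ij}-C_{ij}B_{ii}=-B_{ij}$, using $s-1=-C_{ij}=-2B_{ij}/B_{ii}$. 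By the uniqueness observation above, no other edges exist among these vertices, so the sticking graph is exactly the displayed cycle.

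For the "otherwise" direction, I would argue that any directed cycle is forced to have this very form, contradicting the non-existence of $\sigma$. A cycle that used only $(i,\cdot)$ vertices would chain together edges $a_{t_{\ell+1}}=a_{t_\ell}q^{B_{ii}}$ and return, giving $a=aq^{kB_{ii}}$ with $k>0$, impossible by transcendence of $q$. Hence any cycle passes through $(j,b)$. Such a cycle must have the form
\[(j,b)\to(i,a_{t_1})\to(i,a_{t_2})\to\cdots\to(i,a_{t_k})\to(j,b),\]
with $a_{t_1}=bq^{B_{ij}}$, $a_{t_k}=bq^{-B_{ij}}$ and $a_{t_{\ell+1}}=a_{t_\ell}q^{B_{ii}}$. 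Iterating the recursion gives $a_{t_\ell}=bq^{B_{ij}+(\ell-1)B_{ii}}$, and imposing $a_{t_k}=bq^{-B_{ij}}$ forces $(k-1)B_{ii}=-2B_{ij}$, i.e.\ $k=1-C_{ij}=s$. The indices $t_1,\dots,t_s$ are distinct (a cycle is simple), hence define a permutation $\sigma\in\Sigma_s$ satisfying the required identity.

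I expect no genuine obstacle here; the only point to be careful about is the transcendence of $q$, which underlies both the absence of self-loops/short cycles and the uniqueness of the index realising each required value $bq^{B_{ij}+(\ell-1)B_{ii}}$.
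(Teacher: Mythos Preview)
Your argument is correct and in fact cleaner than the paper's. For the ``if'' direction you do essentially what the paper does: verify the closing edge via $B_{ij}+(s-1)B_{ii}=-B_{ij}$, which is the identity $B_{ii}C_{ij}=2B_{ij}$ the paper quotes, and then check that no spurious edges appear because the values $bq^{B_{ij}+(t-1)B_{ii}}$ are pairwise distinct. (One small slip: in your first bullet you wrote $a_t=bq^{-B_{ji}}$; the edge condition is $b=a_tq^{-B_{ji}}$, hence $a_t=bq^{B_{ji}}=bq^{B_{ij}}$. Your conclusion is right.)

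The genuine difference is in the ``otherwise'' direction. The paper simply says this ``can be seen by inspecting the type A, B/C and G Cartan matrices of rank 2'', i.e.\ a case-by-case check of the three possible neighbouring pairs in a simple Lie algebra. Your argument is uniform: any cycle must pass through $(j,b)$ (else one gets $q^{kB_{ii}}=1$), and then the chain $a_{t_\ell}=bq^{B_{ij}+(\ell-1)B_{ii}}$ together with the closing condition $a_{t_k}=bq^{-B_{ij}}$ forces $k=1-C_{ij}=s$ and manufactures the permutation $\sigma$. This uses only $B_{ii}>0$, $B_{ij}\neq 0$, the symmetry of $B$, and that $q$ is not a root of unity, so it would go through for any symmetrizable Cartan datum---a point the paper itself flags as desirable in its list of open questions. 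The trade-off is that the paper's check is instantaneous once one writes down the three $2\times 2$ matrices, whereas your argument requires tracking the chain carefully; but yours explains \emph{why} the cycle length must equal $s$ rather than merely confirming it.
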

\begin{proof}
For the first part observe that if the given condition holds then $a_{\sigma(s)}=q^{-B_{ij}}$; to see this note that $B_{ii}C_{ij}=2B_{ij}$. So the sticking graph is indeed the cycle shown. The ``otherwise'' part can be seen by inspecting the Cartan matrices of $A_1\times A_1$, $A_2$, $B_2$, $C_2$ and $G_2$.
\end{proof}

\begin{prop}\label{mostlyautoprop}
Suppose the sticking graph of $\left\{ (i,a_1), (i,a_2), \dots, (i,a_s), (j,b)\right\}$ is not a directed cycle graph. Then the relation (\ref{EErel}) implies the relation 
\be \sum_{\pi\in\Sigma_s}\sum_{r=0}^s(-1)^r
{\binom s r}_{q_i} 
\E^\pm_{i,a_{\pi(1)}}(z_{\pi(1)})\ldots
  \E^\pm_{i,a_{\pi(r)}}(z_{\pi(r)})  \E^\pm_{j,b}(w) \E^\pm_{i,a_{\pi(r+1)}}(z_{\pi(r+1)})\ldots \E^\pm_{i,a_{\pi(s)}}(z_{\pi(s)}) =0.\label{autser}\ee
\end{prop}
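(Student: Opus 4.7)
The plan is to derive (\ref{autser}) directly from the quasi-commutation relations of Proposition \ref{reEprop} (equivalently, relation (\ref{EErel})) together with an identity due to Jing. By symmetry it suffices to treat the $\pm = -$ case; the $+$ case follows by the analogous argument applied to the ``opposite'' sticking graph. I would begin by extracting, via Lemma \ref{cyclem}, the combinatorial consequence of acyclicity: the multiset $\{a_1,\dots,a_s\}$ must miss at least one element of the chain $\bigl\{bq^{B_{ij}+tB_{ii}}:0\leq t\leq s-1\bigr\}$. The two cleanest sub-cases are (A) no $a_l$ equals $bq^{B_{ij}}$ and (B) no $a_l$ equals $bq^{-B_{ij}}$: in each the denominator in the relevant swap of Proposition \ref{reEprop} has invertible leading term for every $a_l$, so $\E^-_{j,b}(w)$ can be moved all the way to the right (resp.\ left) past every $\E^-_{i,a_l}(z_l)$. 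The remaining intermediate case (both endpoints present but a middle link missing, possible only for $s\geq 3$) can be treated by induction on $s$, or by inserting $\E^-_{j,b}(w)$ at the break point, where a local, obstruction-free move becomes available.

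Next, in sub-case (A), I would move $\E^-_{j,b}(w)$ to the rightmost position in every term of (\ref{autser}), collecting rational differential operators $\rho_l(\partial_w,\partial_{z_{\pi(l)}})$ from the swap relations of Proposition \ref{reEprop}. Since each $\rho_l$ depends only on the variables $w$ and $z_{\pi(l)}$, it commutes with the other $\E^-_i$ factors and can be pulled outside the whole product. Using the $i=j$ instance of Proposition \ref{reEprop}, I would then rebracket the $\E^-_i$ factors into a fixed reference ordering, which introduces further rational differential operators. After this rewriting, the sum in (\ref{autser}) takes the form
\[
\sum_{\pi\in\Sigma_s} C_\pi(\partial_w,\partial_{z_1},\dots,\partial_{z_s})\,\E^-_{i,a_1}(z_1)\cdots\E^-_{i,a_s}(z_s)\,\E^-_{j,b}(w),
\]
where each $C_\pi$ is a concrete rational expression in the $\partial$'s assembled from the $R$-matrix factors. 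The claim then reduces to the scalar identity $\sum_\pi C_\pi=0$, a multi-variable $q$-Serre relation for formal series; this is precisely the identity of Jing \cite{Jing}, and its vanishing completes the proof.

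The main obstacle is the bookkeeping in the second paragraph: tracking which rational factors accrue from the $\E^-_{j,b}$-past-$\E^-_i$ swaps and from the subsequent $\E^-_i$-past-$\E^-_i$ rebracketing, and assembling them into exactly the form that matches the hypothesis of Jing's identity, is intricate. A secondary difficulty is the intermediate-gap case, where one must identify a position at which $\E^-_{j,b}(w)$ can be inserted without any local sticking obstruction, in order to split the sum into pieces that are each amenable to the endpoint argument. Once these combinatorial steps are in place, the claim reduces to a single application of a known formal-series identity, and in particular no use of the Serre-type relation (\ref{cycleser}) is required---which is exactly the content of the proposition.
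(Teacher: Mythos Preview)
Your overall strategy---use Proposition \ref{reEprop} to rewrite every monomial in terms of a single reference monomial, then recognize the resulting scalar coefficient as an instance of Jing's identity---is exactly the paper's approach, and your bookkeeping remarks about pulling the differential operators outside are correct. The difference is in how the reference monomial is chosen. You break into sub-cases (A), (B), and an ``intermediate gap'' case according to which endpoint of the chain $\{bq^{B_{ij}+tB_{ii}}\}$ is missing, and propose to handle the intermediate case separately. The paper instead uses the full force of Lemma \ref{cyclem}: since the sticking graph is not the directed cycle, it has \emph{no} directed cycles at all, hence admits a topological ordering. This ordering places $(j,b)$ at some intermediate slot $t$, and the reference monomial is simply the product in that order, with $\E^-_{j,b}(w)$ sandwiched in position $t+1$. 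Every monomial in the Serre sum can then be rewritten to this one form in a single uniform step, with no case distinction.

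Your case split is not wrong, but the handling of the intermediate case is underspecified. The suggestion ``induction on $s$'' does not work: $s=1-C_{ij}$ is fixed by the Cartan data, so there is no smaller instance to induct from. Your alternative suggestion, ``inserting $\E^-_{j,b}(w)$ at the break point,'' is on the right track and is effectively what the topological sort accomplishes---but you should also note that the $\E^-_i$ factors themselves may need to be reordered (not just $\E^-_{j,b}$), and the reference ordering among them must also be a topological one for Proposition \ref{reEprop} to apply at every swap. Once you adopt the global topological sort, the three sub-cases collapse into one and the argument goes through exactly as you outline.
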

\begin{proof}
Let us consider, for definiteness, $\E^-$; the argument for $\E^+$ is similar. By the preceding lemma, the sticking graph of $\left\{ (i,a_1), (i,a_2), \dots, (i,a_s), (j,b)\right\}$ has no directed cycles. Consequently, there must exist a permutation $\tau\in \Sigma_s$ and a $t\in \{1,\dots,s\}$ such that in the  tuple
\be \left( (i,a_{\tau(1)}), \dots, (i,a_{\tau(t)}), (j,b), (i,a_{\tau(t+1)}), \dots, (i, k_{\tau(s)})\right)\nn\ee
no element sticks to the left of any element preceding it; without loss of generality, suppose that $\tau=\id$. 
It follows that we may use Proposition \ref{reEprop} to express every monomial appearing on the left of (\ref{autser}) in terms of the monomial  
\be\nn \scr M :=\E^-_{i,a_{1}}(z_{1}) \dots \E^-_{i,a_{t}}(z_{t}) \E^-_{j,b}(w) \E^-_{i,a_{t+1}}(z_{t+1}) \dots \E^-_{i,a_{s}}(z_{s}).\ee 
Indeed, the left-hand side of (\ref{autser}) is
\bea &=& \sum_{\pi\in\Sigma_s}\sum_{r=0}^s (-1)^r {\binom s r}_{q_i} 
\prod_{\substack{n<m\\ \pi^{-1}(n) > \pi^{-1}(m)}} 
\frac{ a_m q^{ - B_{ii}} \left(1 + \pd{z_m}\right) - a_n  \left( 1 + \pd{z_n}\right) }{ a_m  \left(1 + \pd{z_m}\right) - a_n q^{- B_{ii}} \left( 1 + \pd{z_n}\right) }\nn\\
&&\times\prod_{\substack{n>t\\ \pi^{-1}(n)\leq r}}
\frac{ a_n q^{- B_{ij}} \left(1 + \pd{z_n}\right) - b \left( 1 + \pd w\right) }{ a_n \left(1 + \pd {z_n}\right) - bq^{- B_{ij}} \left( 1 + \pd w\right) } 
\prod_{\substack{n\leq t\\ \pi^{-1}(n)> r}}
\frac{ bq^{- B_{ij}} \left(1 + \pd w\right) - a_n  \left( 1 + \pd{z_n}\right) }{ b \left(1 + \pd w\right) - a_n q^{- B_{ij}} \left( 1 + \pd{z_n}\right) }\,\, \scr M
\nn\\
&=&
\frac 1 \Delta \sum_{\pi\in\Sigma_s}\sum_{r=0}^s (-1)^r {\binom s r}_{q_i}  \nn\\
&&\times\!\!\!\!\!\!
\prod_{\substack{n<m\\ \pi^{-1}(n) < \pi^{-1}(m)}} 
\left(a_m  \left(1 + \tpd{z_m}\right) - a_n q^{- B_{ii}} \left( 1 + \tpd{z_n}\right) \right)
\!\!\!\!\prod_{\substack{n<m\\ \pi^{-1}(n) > \pi^{-1}(m)}} 
\left(a_m q^{- B_{ii}} \left(1 + \tpd{z_m}\right) - a_n  \left( 1 + \tpd{z_n}\right) \right) \nn\\
&&\times 
\prod_{\substack{n>t\\ \pi^{-1}(n)> r}}
\left( a_n  \left(1 + \tpd {z_n}\right) - bq^{- B_{ij}} \left( 1 + \tpd w\right) \right) 
\prod_{\substack{n>t\\ \pi^{-1}(n)\leq r}}
\left( a_n q^{- B_{ij}} \left(1 + \tpd{z_n}\right) - b \left( 1 + \tpd w\right)\right) \nn\\
&&\times 
\prod_{\substack{n\leq t\\ \pi^{-1}(n)\leq r}}
\left( b \left(1 + \tpd w\right) - a_n q^{- B_{ij}} \left( 1 + \tpd{z_n}\right) \right)
\prod_{\substack{n\leq t\\ \pi^{-1}(n)> r}}
\left( bq^{- B_{ij}} \left(1 + \tpd w\right) - a_n  \left( 1 + \tpd{z_n}\right)\right)\,\,\scr M,
\label{autser1}\eea
where
\bea\Delta &:=& \prod_{n<m} \left(a_m \left(1 + \tpd{z_m}\right) - a_n q^{- B_{ii}} \left( 1 + \tpd{z_n}\right)\right)\nn\\
 &&  \times \prod_{n>t} \left(a_n \left(1 + \tpd {z_n}\right) - bq^{- B_{ij}} \left( 1 + \tpd w\right) \right)
     \prod_{n\leq t}\left( b \left(1 + \tpd w\right) - a_n q^{- B_{ij}} \left( 1 + \tpd{z_n}\right)\right) \nn.\eea
Now, given commuting indeterminates $F_n$, $1\leq n\leq s$, and $G$, define for each $\pi\in \Sigma_s$ and each $r\in\{1,\dots,s\}$,
\bea \nn A_{\pi,r}(F_1,\dots,F_s;G) &:=& 
\prod_{\substack{n<m\\ \pi^{-1}(n) < \pi^{-1}(m)}} 
\left(F_m - q^{- B_{ii}} F_n \right)
\prod_{\substack{n<m\\ \pi^{-1}(n) > \pi^{-1}(m)}} 
\left(q^{- B_{ii}} F_m  - F_n  \right) \nn\\
&&\times 
\prod_{\substack{n\\\pi^{-1}(n)> r}}
\left( F_n - q^{- B_{ij}} G  \right) 
\prod_{\substack{n\\\pi^{-1}(n)\leq r}}
\left( q^{- B_{ij}} F_n  - G \right).\nn\eea
We then have the identity
\be\nn \sum_{\pi\in\Sigma_s}\sum_{r=0}^s (-1)^r {\binom s r}_{q_i} A_{\pi,r}(F_1,\dots,F_s;G) = 0,\ee
which can be verified by direct calculation with the aid of a computer algebra system case-by-case (i.e. for the Cartan matrices of $A_1\times A_1$, $A_2$, $B_2$, $C_2$ and $G_2$). It is actually a special instance of an identity due to Jing which holds for arbitrary symmetric generalized Cartan matrices, \cite{Jing}, cf. also \cite{DJ} and \cite{HernandezFusion}. 
One may check that (\ref{autser1}) is equal to 
\be 
\frac 1 \Delta \left(\sum_{\pi\in\Sigma_s}\sum_{r=0}^s (-1)^r {\binom s r}_{q_i} 
A_{\pi,r}\left( a_1 \left(1+\pd{z_1}\right), \dots, a_s\left(1+\pd{z_s}\right); b\left(1+\pd{w}\right) \right) \right) \scr M.\nn
\ee
It therefore vanishes, as required.\end{proof}

\begin{prop}\label{cyclicserresufficeprop}
Given the relations (\ref{EErel}), imposing the relation 
\be \sum_{\pi\in\Sigma_s}\sum_{r=0}^s(-1)^r
{\binom s r}_{q_i} 
\E^\pm_{i,a_{\pi(1)}}(z_{\pi(1)})\ldots
  \E^\pm_{i,a_{\pi(r)}}(z_{\pi(r)})  \E^\pm_{j,b}(w) \E^\pm_{i,a_{\pi(r+1)}}(z_{\pi(r+1)})\ldots \E^\pm_{i,a_{\pi(s)}}(z_{\pi(s)}) =0\label{autser2}\ee
is equivalent to imposing the relation
\be \sum_{r=0}^s(-1)^r
{\binom s r}_{q_i} 
\E^\pm_{i,bq^{ \mp B_{ij} \mp r B_{ii}}}(z_{r+1})\ldots
  \E^\pm_{i,bq^{\mp B_{ij} \mp (s-1) B_{ii}}}(z_s)  \E^\pm_{j,b}(w) \E^\pm_{i,bq^{\mp B_{ij}}}(z_1) \ldots \E^\pm_{i,bq^{\mp B_{ij}  \mp (r-1) B_{ii} }}(z_{r}) =0.\label{imposeser}\ee
\end{prop}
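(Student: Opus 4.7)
The plan is to establish both implications, with the non-trivial content lying in $(\ref{imposeser}) \Rightarrow (\ref{autser2})$. By Proposition \ref{mostlyautoprop}, the relation (\ref{autser2}) already follows from (\ref{EErel}) alone whenever the sticking graph of $\{(i,a_1), \ldots, (i,a_s), (j,b)\}$ is not a directed cycle. So by Lemma \ref{cyclem}, it suffices to treat the case where the $a_k$'s form a cyclic set. Moreover, since (\ref{autser2}) is invariant under the simultaneous diagonal action of $\Sigma_s$ on the pairs $(a_k, z_k)$, we may without loss of generality assume $a_k = bq^{\mp B_{ij} \mp (k-1)B_{ii}}$, which is precisely the configuration in (\ref{imposeser}).

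Under this cyclic specialization, the sticking graph becomes the directed cycle
$$(j,b) \to (i,a_1) \to (i,a_2) \to \cdots \to (i,a_s) \to (j,b).$$
The strategy is to reduce each of the $s!(s+1)$ monomials $X(\pi,r)$ appearing in (\ref{autser2}) to a linear combination of the $s+1$ ``canonical'' monomials corresponding to the $s+1$ possible cut-points of this cycle. These canonical monomials are exactly those entering (\ref{imposeser}). The reduction uses Proposition \ref{reEprop}, which applies to every pair $\E^\pm_{i,a_k},\E^\pm_{i,a_l}$ with $|k-l|\geq 2$ (non-adjacent in the cycle) and to every pair $\E^\pm_{i,a_k},\E^\pm_{j,b}$ with $k\notin\{1,s\}$. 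Only adjacent-in-cycle transpositions are forbidden by the ``whenever'' hypothesis, and these are exactly the ones preserved by the cyclic structure.

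After executing the reordering, the LHS of (\ref{autser2}) acquires the shape $\frac{1}{\Delta'}\,P(\partial_{z_1},\ldots,\partial_{z_s},\partial_w)\cdot \scr M$ for some canonical monomial $\scr M$ and a polynomial $P$ in the derivatives, in direct analogy with (\ref{autser1}) in the proof of Proposition \ref{mostlyautoprop}. The key claim is that, thanks to a suitable specialization of Jing's identity, $P$ equals a nonzero scalar multiple of the polynomial coefficient whose vanishing is (\ref{imposeser}). Granted this, (\ref{imposeser}) forces (\ref{autser2}) to vanish. The reverse implication follows by running the same reduction in reverse: (\ref{autser2}) in its cyclic specialization collapses to $\frac{1}{\Delta'}P\cdot\scr M=0$, which then yields (\ref{imposeser}).

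The main obstacle will be the combinatorial verification of this cyclic specialization of Jing's identity. In Proposition \ref{mostlyautoprop} the identity was a polynomial identity in free commuting variables $F_1,\ldots,F_s,G$, and the cancellation happened because the indeterminates were algebraically independent. In the cyclic case the constraint $a_sq^{B_{ii}} = bq^{-B_{ij}}$ imposes a relation among the variables $F_s$ and $G$, so one must check that Jing's identity either survives this specialization or acquires a correction term that matches precisely the one monomial $\scr M$ supports. Handling this specialization carefully, and tracking the coefficients introduced by the forbidden adjacent swaps (which must be rewritten using the generating-series form of (\ref{EErel}) to absorb the derivative contributions), is the heart of the argument.
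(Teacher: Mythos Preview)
Your opening is right: by Proposition \ref{mostlyautoprop} only the cyclic configuration needs work, and after relabelling you may take $a_t = bq^{\mp B_{ij}\mp(t-1)B_{ii}}$. But from there your proposal goes off course in two ways.

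First, there is an internal inconsistency. You correctly note that the irreducible target monomials are the $s+1$ cyclic rotations (the cut-points of the cycle), yet you then assert that the left side of (\ref{autser2}) collapses to $\frac{1}{\Delta'}P\cdot\scr M$ for \emph{one} canonical monomial $\scr M$. It cannot: the $s+1$ canonical monomials are precisely the words in which every adjacent pair is in sticking order, so Proposition \ref{reEprop} gives no relation among them. The expression (\ref{imposeser}) is a genuine linear combination of all $s+1$ of them, not an operator applied to one.

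Second, and more seriously, invoking a ``cyclic specialization of Jing's identity'' is the wrong mechanism. Jing's identity is what makes the \emph{non}-cyclic case vanish outright in Proposition \ref{mostlyautoprop}. In the cyclic case the Serre expression does not vanish identically; it is supposed to reduce to (\ref{imposeser}). The paper's argument avoids Jing entirely here. After the trivial rewriting of (\ref{autser2}) that puts $\E^\pm_{j,b}(w)$ between the $(r+1)$-st and $r$-th blocks, the $(\pi,r)$-term for $\pi=\id$ is already the $r$-th summand of (\ref{imposeser}), while for $\pi\neq\id$ Proposition \ref{reEprop} lets you write the term as $\mathscr D_\pi$ times the canonical one, with $\mathscr D_\pi\in\C[[\partial_{z_1},\dots,\partial_{z_s},\partial_w]]$ having \emph{zero constant term}. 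Thus the left side of (\ref{autser2}) equals $\bigl(\sum_\pi\mathscr D_\pi\bigr)$ applied to the left side of (\ref{imposeser}), and $\sum_\pi\mathscr D_\pi$ has constant term $1$, hence is invertible in the completion. That invertibility is the whole point: it gives both implications at once, so there is no need to ``run the reduction in reverse'' (which in any case you cannot do when coefficients have vanishing constant term). Your proposal never isolates this invertible-operator step, and the Jing route you sketch would not produce it.
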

\begin{proof}
By the preceding proposition, it is enough to consider the case in which the sticking graph of $\left\{ (i,a_1), (i,a_2), \dots, (i,a_s), (j,b)\right\}$ is a directed cycle graph. That is, cf Lemma \ref{cyclem}, we can suppose, relabelling the $a$'s as necessary, that 
\be a_t=bq^{\mp B_{ij} \mp (t-1) B_{ii}}=bq^{ \pm B_{ij} \pm (s-t) B_{ii}}.\nn\ee 
Obviously (\ref{autser2}) is equivalent to 
\be \sum_{r=0}^s(-1)^r
{\binom s r}_{q_i} \sum_{\pi\in\Sigma_s}
\E^\pm_{i,a_{\pi(r+1)}}(z_{\pi(r+1)})\ldots
  \E^\pm_{i,a_{\pi(s)}}(z_{\pi(s)})  \E^\pm_{j,b}(w) \E^\pm_{i,a_{\pi(1)}}(z_{\pi(1)})\ldots \E^\pm_{i,a_{\pi(r)}}(z_{\pi(r)}) =0.\nn\ee
The left-hand side here is equal to (by Proposition \ref{reEprop})
\be \sum_{r=0}^s(-1)^r
{\binom s r}_{q_i} \left(\sum_{\pi\in\Sigma_s} \mathscr D_\pi\right)
\E^\pm_{i,a_{r+1}}(z_{r+1})\ldots
  \E^\pm_{i,a_{s}}(z_{s})  \E^\pm_{j,b}(w) \E^\pm_{i,a_{1}}(z_{1})\ldots \E^\pm_{i,a_{r}}(z_{r})\nn\ee
where for each $\pi\in \Sigma_s$, $\mathscr D_\pi\in \C[[\pd {z_1},\dots \pd {z_s},\pd w]]$, and moreover  $\scr D_\pi$ has zero constant term for each $\pi\neq \id$, while $\scr D_\id= 1$. The result follows. 
\end{proof}


\subsection{Proof of Theorem \ref{mainthm}}\label{mainproof}

\begin{lem} \label{dellem}Let $u$ be an indeterminate. For all $n\in \Z_{\geq 0}$. 
\be u \left[\frac{a^{n+1}}{(n+1)!}\left(\pd a\right)^{n+1} \delta\left(\frac{a}{u}\right) \right]
=   a\left[\frac{a^{n+1}}{(n+1)!} \left(\pd a\right)^{n+1} \delta\left(\frac{a}{u}\right)   
+  \frac{a^n}{n!}\left(\pd a\right)^{n} \delta\left(\frac{a}{u}\right) \right] \nn \ee
\end{lem}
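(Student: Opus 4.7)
The plan is to derive the stated identity from the well-known formal-distribution relation $(u-a)\delta(a/u)=0$, which is the only ``input'' one really needs, plus the Leibniz rule.

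First I would rewrite the claim. Moving the $u\left[\cdot\right]$ term to the right, the identity is equivalent to
\begin{equation*}
(u-a)\,\frac{a^{n+1}}{(n+1)!}\left(\pd a\right)^{n+1}\delta\!\left(\frac a u\right) \;=\; a\cdot\frac{a^n}{n!}\left(\pd a\right)^n \delta\!\left(\frac a u\right).
\end{equation*}
So it suffices to prove
\begin{equation*}
(u-a)\left(\pd a\right)^{n+1}\delta\!\left(\frac a u\right) \;=\; (n+1)\left(\pd a\right)^n\delta\!\left(\frac a u\right),
\end{equation*}
since multiplying both sides by $a^{n+1}/(n+1)!$ and dividing by nothing produces the desired form.

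Next, I would start from the base identity $(u-a)\delta(a/u)=0$, which holds in $\C[[a,a^{-1},u,u^{-1}]]$ (it follows directly from $\delta(a/u)=\sum_{r\in\Z}(a/u)^r$ and the fact that shifting the summation index $r\mapsto r-1$ is a bijection). Applying $(\pd a)^{n+1}$ to both sides and using the Leibniz rule, the only nonzero contributions come from differentiating $(u-a)$ zero times or once, giving
\begin{equation*}
0=\left(\pd a\right)^{n+1}\!\bigl[(u-a)\,\delta(a/u)\bigr] = (u-a)\left(\pd a\right)^{n+1}\delta(a/u)-(n+1)\left(\pd a\right)^{n}\delta(a/u).
\end{equation*}
This is exactly the rearranged identity above, so the lemma follows immediately.

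There is essentially no obstacle; the only thing to check is that the Leibniz-rule manipulation is legal in $\C[[a,a^{-1},u,u^{-1}]]$, which it is because multiplication by the polynomial $(u-a)$ and differentiation in $a$ are both well-defined on this space of formal series. As a sanity check one may also verify the claim coefficient-by-coefficient: writing $\frac{a^k}{k!}(\pd a)^k\delta(a/u) = \sum_{r\in\Z}\binom{r}{k}(a/u)^r u$ (with $\binom{r}{k}$ the usual polynomial in $r$), the identity reduces to Pascal's rule $\binom{r+1}{n+1}=\binom{r}{n+1}+\binom{r}{n}$.
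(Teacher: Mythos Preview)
Your proof is correct and essentially the same as the paper's: both reduce to the identity $(u-a)(\partial/\partial a)^{n+1}\delta(a/u)=(n+1)(\partial/\partial a)^n\delta(a/u)$, obtained from $(u-a)\delta(a/u)=0$ via the Leibniz rule, and then multiply through by $a^{n+1}/(n+1)!$. (There is a stray factor of $u$ in your coefficient-by-coefficient sanity check---it should read $\sum_{r\in\Z}\binom{r}{k}(a/u)^r$---but this does not affect the main argument.)
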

\begin{proof}
We have
\begin{align} u \left(\pd a\right)^{n+1} \delta\left(\frac{a}{u}\right) 
&= \left(\pd a\right)^{n+1} u\, \delta\left(\frac{a}{u}\right)  
= \left(\pd a\right)^{n+1} a\, \delta\left(\frac{a}{u}\right)   \nn\\
&= a \left(\pd a\right)^{n+1} \delta\left(\frac{a}{u}\right)  
+ (n+1) \left(\pd a\right)^{n} \delta\left(\frac{a}{u}\right)  \nn
\end{align}
and hence the result.
\end{proof}
\begin{prop}\label{thetadef}
The assignment
$k_i\mapsto \K_i$ and
\begin{align} x^\pm_{i}(z) &\mapsto \sum_{a\in \P_i} \sum_{m\in \Z_{\geq 0}} \E^\pm_{i,a,m} \frac{a^m}{m!}  \left(\pd a\right)^m \delta\left(\frac{a}{z}\right) ,\nn\\
\Phi_i(1/z) &\mapsto \sum_{a\in \P_i}\sum_{m\in \Z_{\geq 0}} \H_{i,a,m}\frac{a^m}{m!} \left(\pd a\right)^m \delta\left(\frac{a}{z}\right) \nn\end{align}
extends to a homomorphism of algebras $\theta_{\P}:\uqlg\to \alg$.
\end{prop}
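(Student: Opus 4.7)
The plan is to verify that each of the defining relations of $\uqlg$ in the Drinfeld-style presentation stated above holds when its generators are replaced by their images under $\theta_\P$. The verification groups naturally into three classes. The purely ``Cartan-type'' relations $\K_i\K_i^{-1}=1$, (\ref{Phikrel}) and (\ref{kxpxmrel}) follow verbatim from (\ref{Krels}); the relation (\ref{Phidef3}) reduces to (\ref{HKrel}) after observing that only the $m=0$ summand of $\theta_\P(\Phi_i(1/z))$ contributes to the constant-in-$z$ coefficient, since $\partial_a^m$ with $m\geq 1$ annihilates the $z^0$ component of $\delta(a/z)$.

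For the commutator (\ref{xpxmPhirel}), substituting the images of $x_i^+(u)$ and $x_j^-(v)$ and applying (\ref{EpEmrel}) produces $\delta_{ij}\sum_{a\in\P_i}\sum_p \H_{i,a,p}\sum_{m+n=p}\frac{a^m}{m!}\partial_a^m\delta(a/u)\cdot\frac{a^n}{n!}\partial_a^n\delta(a/v)$; the inner sum equals $\delta(u/v)\frac{a^p}{p!}\partial_a^p\delta(a/u)$ by an elementary application of the Leibniz rule to the standard identity $\delta(a/u)\delta(a/v)=\delta(u/v)\delta(a/u)$. For the bilinear relations (\ref{Phirel}) and (\ref{xpxmrel}), the central device is Lemma \ref{dellem}: it lets one convert, at the level of the generating-series summation, each factor of $u$ (respectively $v$) multiplying a distribution $\frac{a^m}{m!}\partial_a^m\delta(a/u)$ into an application of $a(1+\partial_z)$ (respectively $b(1+\partial_w)$) to the series $\E^\pm_{i,a}(z)$ or $\H_{i,a}(z)$. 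After this conversion, each side of the relevant $\uqlg$ relation, summed termwise over $a\in\P_i$ and $b\in\P_j$, matches the corresponding generating-series identity proved in Proposition \ref{reEprop}.

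The Serre relations (\ref{Serre}) are the main obstacle, and where most of the preparatory work of \S\ref{proofsec} has been directed. Applying $\theta_\P$ and performing the same Lemma \ref{dellem} conversion, (\ref{Serre}) decomposes, for each tuple $(a_1,\dots,a_s,b)\in\P_i^s\times\P_j$ appearing in the resulting sum, into the generating-series identity (\ref{autser2}). Proposition \ref{mostlyautoprop} disposes of every tuple whose sticking graph is not a directed cycle, since in those cases (\ref{autser2}) is automatic from (\ref{EErel}) alone. Proposition \ref{cyclicserresufficeprop} then shows that the remaining cyclic case is equivalent to (\ref{imposeser}), which is precisely the Serre relation (\ref{cycleser}) imposed in the definition of $\alg$. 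Throughout, the infinite formal sums produced by these manipulations are well-defined thanks to the completion of $\alg$ constructed in \S\ref{completion}.
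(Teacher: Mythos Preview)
Your proposal is correct and follows essentially the same route as the paper: verify the Cartan-type relations from (\ref{Krels}) and (\ref{HKrel}), use Lemma \ref{dellem} to reduce (\ref{Phirel}) and (\ref{xpxmrel}) to (\ref{EHrel}) and (\ref{EErel}), use the Leibniz rule and $\delta(a/u)\delta(a/v)=\delta(u/v)\delta(a/u)$ for (\ref{xpxmPhirel}), and invoke Propositions \ref{mostlyautoprop} and \ref{cyclicserresufficeprop} for the Serre relations.

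Two small corrections. First, for the bilinear relations you should not cite Proposition \ref{reEprop}: that proposition carries the hypothesis $a\neq bq^{\pm B_{ij}}$ and so does not cover all pairs $(a,b)\in\P_i\times\P_j$. What you actually need are the generating-series reformulations of (\ref{EErel}) and (\ref{EHrel}) displayed in \S\ref{gser} just \emph{before} Proposition \ref{reEprop}, which are stated to be equivalent to (\ref{EErel})--(\ref{EHrel}) and hold without restriction on $a,b$. Second, the Serre relations (\ref{Serre}) carry no polynomial prefactors in the formal variables, so Lemma \ref{dellem} plays no role there; the image under $\theta_\P$ decomposes directly into the relations (\ref{autser2}) after relabelling the summation via the $\Sigma_s$-action, exactly as in the paper's proof.
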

\begin{proof}
In fact the defining relations of $\alg$ are constructed by demanding that this be true. Consider the relation (\ref{x+x+1}). On applying $\theta_\P$ to the left-hand side we have
\be \sum_{\substack{a\in \P_i,\\ m\in \Z_{\geq 0}}} \sum_{\substack{b\in \P_j ,\\ n\in \Z_{\geq 0}}} 
\left(\frac{a^m}{m!} \left(\pd a\right)^m \delta\left(\frac{a}{u}\right)\right)
\left(\frac{b^n}{n!} \left(\pd b\right)^n \delta\left(\frac{b}{v}\right) \right)
(u-q^{\pm B_{ij}}v)\E^\pm_{i,a,m}\E^\pm_{j,b,n}.\nn\ee
By Lemma \ref{dellem}, and the identity $\delta(u/a)u = \delta(u/a)a$, this is equal to 
\be\begin{split} \sum_{\substack{a\in \P_i,\\ m\in \Z_{\geq 0}}} \sum_{\substack{b\in \P_j,\\ n\in \Z_{\geq 0}}} 
\left(\frac{a^m}{m!} \left(\pd a\right)^m \delta\left(\frac{a}{u}\right)\right)
\left(\frac{b^n}{n!} \left(\pd b\right)^n \delta\left(\frac{b}{v}\right) \right)
\\\times\left[(a-bq^{\pm B_{ij}})\E^\pm_{i,a,m}\E^\pm_{j,b,n}
 +  a\E^\pm_{i,a,m+1}\E^\pm_{j,b,n} - bq^{\pm B_{ij}} \E^\pm_{i,a,m}\E^\pm_{j,b,n+1}\right]
\end{split}\label{eerelcheck}\ee
and we see that for the images $\theta_\P(x^\pm_{i,r})\in\alg$ to obey the relation (\ref{x+x+1}) it suffices to impose (\ref{EErel}). The relation (\ref{Phirel}) works in the same way. 

On applying $\theta_\P$ to (\ref{xpxmPhirel}) we find that the left-hand side is
\be \sum_{\substack{a\in \P_i,\\ m\in \Z_{\geq 0}}} 
\sum_{\substack{b\in \P_j,\\ n\in \Z_{\geq 0}}} 
\left(\frac{a^m}{m!} \left(\pd a\right)^m \delta\left(\frac{a}{u}\right)\right)
\left(\frac{b^n}{n!} \left(\pd b\right)^n \delta\left(\frac{b}{v}\right) \right) 
\left[ \E^+_{i,a,m}, \E^-_{j,b,n}\right]\nn\ee
while the right-hand side is
\bea &&\delta_{ij} \delta\left(\frac u v\right)  
\sum_{\substack{a\in \P_i,\\ M\in \Z_{\geq 0}}} 
\left(\frac{a^M}{M!} \left(\pd a\right)^M \delta\left(\frac{a}{u}\right)\right) 
\H_{i,a,M}\nn\\
&=& \delta_{ij}   
\sum_{\substack{a\in \P_i,\\ M\in \Z_{\geq 0}}} 
\left(\frac{a^M}{M!} \left(\pd a\right)^M \delta\left(\frac{a}{u}\right)
\delta\left(\frac{a}{v}\right)
\right) 
\H_{i,a,M}\nn\\
&=& \delta_{ij}   
\sum_{\substack{a\in \P_i,\\ m\in \Z_{\geq 0}}} 
\sum_{\substack{b\in \P_j,\\ n\in \Z_{\geq 0}}} 
\delta_{a,b}
\left(\frac{a^m}{m!} \left(\pd a\right)^m \delta\left(\frac{a}{u}\right)
\right) 
\left(\frac{b^n}{n!} \left(\pd b\right)^n \delta\left(\frac{b}{v}\right)
\right) 
\H_{i,a,m+n}\nn\eea
using the Leibniz rule. Therefore (\ref{EHrel}) is a sufficient condition for the images $\theta_\P(x^\pm_{i,r})$ and $\theta_\P(\Phi_{i,r})$ obey the relation (\ref{xpxmPhirel}). 

To ensure that the relation (\ref{Phidef3}) holds for the images $\K_i=\theta_\P(k_i)$ and $\theta_\P(\Phi_{i,r})$ it suffices to impose the linear relation (\ref{HKrel}). 

Finally, consider the Serre relations, (\ref{Serre}). The image under $\theta_\P$ of the left side of (\ref{Serre}) is 
\begin{align} 
 &\sum_{\pi\in\Sigma_s}\sum_{\substack{a_1,\dots,a_s\in \P_i,\, b\in \P_j\\ m_1,\dots,m_s,n\in \Z_{\geq 0}}} 
\left(\frac{b^{n}}{n!}  \left(\pd b\right)^{n} \delta\left(\frac{b}{z}\right)\right) \prod_{t=1}^s \left(\frac{a_t^{m_t}}{m_t!}  \left(\pd{a_t}\right)^{m_t} \delta\left(\frac{a_t}{w_{\pi(t)}}\right)\right) \nn\\ &\times\sum_{r=0}^s(-1)^r
{\binom s r}_{q_i} 
\E^\pm_{i,a_1,m_{1}}\ldots
  \E^\pm_{i,a_r,m_{r}}  \E^\pm_{j,b,n} \E^\pm_{i,a_{r+1},m_{r+1}}\ldots \E^\pm_{i,a_{s},m_{s}} ,\nn\\
=&\sum_{\pi\in\Sigma_s}\sum_{\substack{a_1,\dots,a_s\in \P_i,\, b\in \P_j\\ m_1,\dots,m_s,n\in \Z_{\geq 0}}} 
\left(\frac{b^{n}}{n!}  \left(\pd b\right)^{n} \delta\left(\frac{b}{z}\right)\right) \prod_{t=1}^s \left(\frac{a_{\pi(t)}^{m_{\pi(t)}}}{m_{\pi(t)}!}  \left(\pd{a_{\pi(t)}}\right)^{m_{\pi(t)}} \delta\left(\frac{a_{\pi(t)}}{w_t}\right)\right) \nn\\ &\times\sum_{r=0}^s(-1)^r
{\binom s r}_{q_i} 
\E^\pm_{i,a_{\pi(1)},m_{\pi(1)}}\ldots
  \E^\pm_{i,a_{\pi(r)},m_{\pi(r)}}  \E^\pm_{j,b,n} \E^\pm_{i,a_{\pi(r+1)},m_{\pi(r+1)}}\ldots \E^\pm_{i,a_{\pi s},m_{\pi s}} ,\nn\end{align} 
and for this to vanish it is sufficient to impose the relations (\ref{autser2}). But then, as Proposition \ref{cyclicserresufficeprop} states, given (\ref{EErel}), the relations (\ref{autser2}) are equivalent to  (\ref{cycleser}).
\end{proof}

It remains to prove the ``moreover'' part of Theorem \ref{mainthm}. We shall need the following lemma. 
\begin{lem}\label{finlem}
Suppose $V$ is a complex vector space. Let $F_{a,m}\in \End(V)$ be linear maps $V\to V$, where the label $a$ is drawn from a finite set of distinct points $\P \subset \Cx$ and where $m\in \{0,1,\dots M\}$ for some fixed $M\in \Z_{\geq 0}$. Let $z$ be an indeterminate. Suppose $\sum_{a\in \P}\sum_{m=0}^M \left(\frac{a^m}{m!} \left(\pd a\right)^m \delta\left(\frac a z\right) \right) F_{a,m}=0$. Then $F_{a,m}=0$ for all $a\in \P$ and $m\in \Z_{\geq 0}$.

More generally, let $F_{a_1,m_1;a_2,m_2;\dots; a_s,m_s}\in \End(V)$ and let $z_1,\dots, z_s$ be indeterminates. If $$\sum_{a_1,a_2,\dots, a_s\in \P}\sum_{m_1,m_2,\dots,m_s=0}^M F_{a_1,m_1;a_2,m_2;\dots; a_s,m_s} \prod_{t=1}^s\left(\frac{a_t^{m_t}}{m_t!} \left(\pd{a_t}\right)^{m_t} \delta\left(\frac{a_t}{z_t}\right) \right)=0$$ then the $F_{a_1,m_1;a_2,m_2;\dots; a_s,m_s}$ are all zero.
\end{lem}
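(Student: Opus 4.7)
The plan is to expand every formal delta function as a two-sided Laurent series in $z$ and reduce the statement to the classical linear independence of ``quasi-polynomial'' sequences $r \mapsto r^k a^r$. The starting point is $\delta(a/z) = \sum_{r \in \Z} a^r z^{-r}$, from which one computes
$$\frac{a^m}{m!}\left(\pd a\right)^m \delta\left(\frac a z\right) = \sum_{r \in \Z} \binom{r}{m} a^r z^{-r}.$$
Extracting the coefficient of $z^{-r}$ in the hypothesis then gives, for every $r \in \Z$,
$$\sum_{a\in \P} a^r P_a(r) = 0, \qquad \text{where}\qquad P_a(r) := \sum_{m=0}^M \binom{r}{m}F_{a,m} \in \End(V),$$
so each $P_a$ is an $\End(V)$-valued polynomial in $r$ of degree at most $M$.

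The core step is to deduce $P_a\equiv 0$ for every $a \in \P$. I would argue as follows: let $T$ denote the shift operator on sequences $\Z \to \End(V)$, $(Tf)(r) := f(r+1)$. For $a \in \P$, the sequence $r \mapsto a^r P_a(r)$ is annihilated by $(T-a)^{M+1}$. For $b \in \P\setminus\{a\}$, however, $(T-b)$ acts on $\{a^r Q(r) : Q \in \End(V)[r]\}$ by $a^r Q(r) \mapsto a^r(aQ(r+1) - bQ(r))$, which sends a polynomial of degree $d$ to a polynomial of the same degree with leading coefficient multiplied by the nonzero scalar $a-b$. Consequently $\prod_{b\in \P\setminus\{a\}}(T-b)^{M+1}$ restricts to an invertible operator on $\{a^r Q(r) : \deg Q \leq M\}$. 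Applying this product operator to $\sum_{a'\in\P} (a')^r P_{a'}(r) = 0$ annihilates every summand except the one indexed by $a$, and invertibility of the restriction yields $P_a\equiv 0$. Finally, since $\{\binom{r}{m}\}_{0\leq m\leq M}$ is a basis of $\C[r]_{\leq M}$, the vanishing of each $P_a$ forces $F_{a,m} = 0$ for all $a \in \P$ and $0\leq m\leq M$.

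The multivariable case I would prove by iteration. Expanding every factor $\frac{a_t^{m_t}}{m_t!}(\pd {a_t})^{m_t}\delta(a_t/z_t)$ as above and equating coefficients of $z_1^{-r_1}\cdots z_s^{-r_s}$ reduces the hypothesis to the vanishing, for every $(r_1,\dots,r_s)\in\Z^s$, of a sum of the form $\sum_{a_1,\dots,a_s\in \P} a_1^{r_1}\cdots a_s^{r_s}\, P_{a_1,\dots,a_s}(r_1,\dots,r_s)$, with $P$ an $\End(V)$-valued polynomial of bounded degree in each $r_t$. Fixing $a_2,\dots,a_s$ and $r_2,\dots,r_s$ and applying the single-variable conclusion to the $(a_1,r_1)$-data (treating the remaining indices as parameters) gives $P_{a_1,\dots,a_s}(r_1,r_2,\dots,r_s)=0$; peeling one variable at a time yields $P\equiv 0$, and the same binomial-basis argument as before then forces the vanishing of every $F_{a_1,m_1;\dots;a_s,m_s}$.

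The only delicate point is the quasi-polynomial linear independence, which is a standard fact from the theory of linear recurrences; everything else is a routine manipulation of formal series and coefficient-wise matching, so I do not anticipate any serious obstacle.
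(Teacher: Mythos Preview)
Your proof is correct and takes a genuinely different route from the paper's. The paper proceeds by extracting finitely many coefficients of $z$ to obtain a square linear system, and then proves an explicit confluent Vandermonde determinant identity
\[
\det\begin{pmatrix} X(a_1)& X(a_2)& \dots &X(a_{|\P|}) \end{pmatrix}
   = \bigl(a_1a_2\dots a_{|\P|}\bigr)^{\frac{M(M+1)} 2}
     \Bigl( \prod_{i<j} (a_j-a_i) \Bigr)^{M^2},
\]
with $X(a)$ a rectangular block built from entries $a^{n}\binom{n+m-2}{m-1}$; nonvanishing of this determinant (for distinct $a_i\in\Cx$) gives the conclusion, and the multivariable case is handled by iterating. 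Your argument instead extracts \emph{all} coefficients of $z^{-r}$ and reduces the statement to the linear independence of the quasi-polynomial sequences $r\mapsto a^r Q(r)$ (distinct $a$, bounded $\deg Q$), which you establish cleanly via the shift operator $(Tf)(r)=f(r+1)$ and the observation that $\prod_{b\neq a}(T-b)^{M+1}$ kills all but the $a$-summand while acting invertibly on it. Your approach is more conceptual and self-contained, and avoids any determinant computation; the paper's approach, on the other hand, yields an explicit formula (used later in Remark~\ref{truncrem} to note that the homomorphism $\uqlg\to\alg/F_{M+1}(\alg)$ has right-inverses), so it carries a bit more information even if it is heavier machinery for the bare lemma.
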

\begin{proof} 
Let $$X(a) := \left( a^{n} \binom{n+m-2}{m-1} \right)_{\substack{1\leq n\leq M|\P|\\ 1\leq m\leq M}}.$$ 
We shall establish the identity
\be \det\bmx X(a_1)& X(a_2)& \dots &X(a_{|\P|}) \emx
   = \left(a_1a_2\dots a_{|\P|}\right)^{\frac{M(M+1)} 2} 
     \left( \prod_{i<j} (a_j-a_i) \right)^{M^2}.\label{detid} \ee  
The first part of the lemma follows from the fact that this determinant is not zero (consider equating coefficients of $z^{k}$ for $1\leq k\leq M|\P|$ in the equation given, and letting $\P = \{a_i^{-1}\}_{1\leq i \leq |\P|}$). 
The identity (\ref{detid})  follows by symmetry arguments analogous to those used in proving the Vandermonde determinant formula. 
Namely, let $D = \det\bmx X(a_1)& X(a_2)& \dots &X(a_{|\P|}) \emx$. This must be equal to a polynomial in the $a_i$ of order $1+2+\dots+M|\P| = \frac{M|\P|(M|\P|+1)} 2$. On symmetry grounds it must have a zero of order $M^2$ at $a_i=a_j$, for every pair $i\neq j$, and a zero of order $M+\binom M 2 =\frac{M(M+1)}2$ at $a_i=0$ for each $i$. Since $\frac{M|\P|(M|\P|+1)} 2 = M^2 \frac{|\P| (|\P|-1)} 2 + |\P| \frac{M(M+1)}2$, these are all the zeros of $D$, and (\ref{detid}) must hold up to a constant of proportionality. To see that this constant is unity, consider the coefficient of $a_1^{1+2+\dots +M} a_2^{(M+1) + \dots + 2M} \dots a_{|\P|}^{((|\P|-1)M+1) + \dots + |\P|M}$. This term comes only from the block on-diagonal part of the determinant sum. Hence the coefficient is 1 by virtue of the identity 
\be \det\left( \binom{n+m+k-2}{m-1} \right)_{\substack{1\leq n,m\leq M}} = 1, \label{detbin}\ee
valid for all non-negative integers $k$ (actually for all $k\in \C$; see for example \cite{ChudiClaudio}, Proposition 3.6, of which (\ref{detbin}) is a special instance).  

The ``more generally'' part follows by applying the first part $s$ times.
\end{proof}

We can now complete the proof of Theorem \ref{mainthm}.
\begin{prop}\label{pullbackprop}
Let $V\in \Ob(\cc_{\P})$. Then $V$ is the pull-back via $\theta_\P$ of a finite-dimensional representation of $\alg$.
\end{prop}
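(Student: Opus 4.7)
The strategy is to produce an algebra homomorphism $\rho_\alg:\alg\to\End(V)$ by setting, on generators,
\[
\K_i^{\pm 1}\mapsto \rho(k_i^{\pm 1}),\qquad \E^\pm_{i,a,m}\mapsto E^\pm_{i,a,m},\qquad \H_{i,a,m}\mapsto H_{i,a,m},
\]
with the maps $E^\pm_{i,a,m}$, $H_{i,a,m}$ furnished by Proposition \ref{idea}, extended by zero for $m>M$. Once the defining relations of $\alg$ are verified for these images, one has a representation of $\alg$ on $V$; its pull-back via $\theta_\P$ then recovers $\rho$ by direct substitution of (\ref{Emap})--(\ref{Hmap}) into the formulas defining $\theta_\P$.

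Verification of each $\alg$-relation proceeds by reversing the computations from the proof of Proposition \ref{thetadef}, supplemented by Lemma \ref{finlem}. Concretely, each defining relation of $\uqlg$ holds on $V$ by assumption; substituting (\ref{Emap})--(\ref{Hmap}) into it yields an identity of formal series in $\End(V)$ of the shape
\[
\sum_{\substack{a_1,\dots,a_s\in \P_\ast\\ m_1,\dots,m_s\in \Z_{\geq 0}}} \Big( \prod_{t=1}^s \tfrac{a_t^{m_t}}{m_t!}\left(\pd{a_t}\right)^{m_t} \delta(a_t/z_t) \Big) \cdot X_{(a_t,m_t)} \,=\, 0,
\]
where, up to a rearrangement carried out exactly as in equation (\ref{eerelcheck}), the inner coefficient $X_{(a_t,m_t)}\in \End(V)$ is the difference between the two sides of the corresponding $\alg$-relation evaluated on the specific endomorphisms. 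Lemma \ref{finlem}, applied with the uniform cutoff $M$ of Proposition \ref{idea}, forces each $X_{(a_t,m_t)}$ to vanish. This handles (\ref{Krels}), (\ref{EpEmrel}), (\ref{EErel}), and (\ref{EHrel}), extracted respectively from (\ref{phiphirel})--(\ref{kxpxmrel}), (\ref{xpxmPhirel}), (\ref{xpxmrel}), and (\ref{Phirel}). Relation (\ref{HKrel}) follows directly from the $z^0$-component of (\ref{Hmap}) combined with (\ref{Phidef3}).

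For the cyclic Serre relations (\ref{cycleser}), apply the same extraction procedure to the $\uqlg$-Serre relations (\ref{Serre}); Lemma \ref{finlem} then yields (\ref{autser2}) with the abstract generators replaced by the endomorphisms $E^\pm_{i,a,m}$, for every tuple $(a_1,\dots,a_s)\in \P_i^s$ and every $b\in \P_j$. Since (\ref{EErel}) has already been verified on $V$, Proposition \ref{cyclicserresufficeprop} --- whose derivation uses only (\ref{EErel}) via Proposition \ref{reEprop}, and hence applies verbatim to the concrete endomorphisms in $\End(V)$ --- converts (\ref{autser2}) into (\ref{cycleser}) on $V$. This completes the verification of all defining relations of $\alg$.

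The main obstacle is this Serre step, which crucially requires coordinating the extraction from (\ref{Serre}) with the cycle-versus-non-cycle analysis of Propositions \ref{mostlyautoprop} and \ref{cyclicserresufficeprop}. A secondary technical point concerns compatibility with the completion of \S\ref{completion}: because the images of all generators $\E^\pm_{i,a,m}$ and $\H_{i,a,m}$ vanish for $m>M$, the induced map on any element of the completed algebra collapses to an effectively finite sum in $\End(V)$, so $\rho_\alg$ descends to the completion and $\rho_\alg\circ\theta_\P=\rho$ as required, proving that $V$ is indeed a pull-back.
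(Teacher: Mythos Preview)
Your proposal is correct and follows essentially the same approach as the paper: use Proposition \ref{idea} to obtain the endomorphisms, then verify each defining relation of $\alg$ by substituting (\ref{Emap})--(\ref{Hmap}) into the corresponding $\uqlg$-relation and applying Lemma \ref{finlem} to extract the coefficient. Your treatment is in fact more explicit than the paper's, which handles (\ref{EErel}) in detail and then dismisses the remaining relations (including Serre) with a sentence; your spelling out of the Serre step via (\ref{autser2}) and Proposition \ref{cyclicserresufficeprop}, and your remark on compatibility with the completion, are both sound and add clarity.
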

\begin{proof}
Proposition \ref{idea} guarantees that the actions of the generators of $x_{i}^\pm(z)$ and $\Phi_i(z)$ on $V$ are of the form (\ref{Emap}--\ref{Hmap}) for some linear maps $E^\pm_{i,a,m}$ and $H_{i,a,m}$. It is enough to show that these maps (together with the representatives of the $k_i^{\pm 1}$) obey the defining relations of $\alg$. 

Consider the relation (\ref{EErel}). Since the representatives of $x_i^\pm(z)$ by assumption obey relation (\ref{x+x+1}), we have
\be\begin{split} 0= \sum_{a,b\in \P_i} \sum_{m,n=0}^M 
\left(\frac{a^m}{m!} \left(\pd a\right)^m \delta\left(\frac{a}{u}\right)\right)
\left(\frac{b^n}{n!} \left(\pd b\right)^n \delta\left(\frac{b}{v}\right) \right)
\\\times\bigg[ (a-bq^{\pm B_{ij}})E^\pm_{i,a,m}E^\pm_{j,b,n}
 +  aE^\pm_{i,a,m+1}E^\pm_{j,b,n} - bq^{\pm B_{ij}} E^\pm_{i,a,m}E^\pm_{j,b,n+1}\\
- (aq^{\pm B_{ij}} -b)E^\pm_{i,a,m}E^\pm_{j,b,n}
   + aq^{\pm B_{ij}}E^\pm_{i,a,m+1}E^\pm_{j,b,n} - bE^\pm_{i,a,m}E^\pm_{j,b,n+1}\bigg]
\end{split},\nn\ee
where now the sum is over finitely many values of $m,n$, in contrast to (\ref{eerelcheck}). Therefore Lemma \ref{finlem} applies. Consequently the maps $E^\pm_{i,a,m}$ must indeed  satisfy (\ref{EErel}). The remaining relations work in the same way: for brevity we omit the details, which amount to introducing upper limits $M$ on the sums in the proof of Proposition \ref{thetadef} and replacing each ``sufficient'' statement with the corresponding ``necessary'' statement.
\end{proof}

\begin{rem}\label{truncrem} In fact we have that, given any $V\in \Ob(\cc_\P)$, there is an $M$ (the $M$ of Proposition \ref{idea}) such that $V$ is the pull-back not merely of a representation of $\alg$, but of the ``truncated'' algebra $\alg/\mc I_{M+1}$. And the proof above uses the fact that, for any given $M$, the homomorphism $\uqlg\to \alg/\mc I_{M+1}$ to the truncated algebra has a right-inverse, as the identity (\ref{detid}) ensures. (This right-inverse is far from unique. For each $i\in I$, its codomain is spanned by the modes numbered $-1,-2,\dots,-M|\P_i|$. But this was a choice: any $M|\P_i|$ distinct modes would have done.)  
However, 
the inverse of the matrix in (\ref{detid}) is not stable under changes in $M$, in the sense that the inverse of the restriction is not the restriction of the inverse, so we cannot take the inverse limit $M\to \8$ of these matrix inverses, c.f. \S\ref{completion}. (Nor is it stable under the addition of more points to the set $\P$.) 
\end{rem}

\section{Properties and first applications of $\alg$}\label{appssec}
\subsection{Triangular decomposition: weak form}
Let $\alg^\pm$ be the subalgebra of $\alg$ generated by $(\E^\pm_{i,a,m})_{i\in I,a\in \P_i,m\in \Z_{\geq 0}}$, and $\alg^0$ the subalgebra generated by $(\H_{i,a,m})_{i \in I, a\in \P_i,m\in \Z_{\geq 0}}$ and $(\K^{\pm 1}_i)_{i\in I}$. 
\begin{prop}\label{triangprop} $\alg = \alg^- \cdot \alg^0 \cdot \alg^+$.
\end{prop}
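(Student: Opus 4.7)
The approach I would take is to bootstrap from the classical triangular decomposition of $\uqlg$ via the homomorphism $\theta_\P$ of Proposition~\ref{thetadef}, combined with the surjectivity of the induced map to each truncation $\alg/F_N(\alg)$ noted in Remark~\ref{truncrem}. This avoids a direct reordering argument in $\alg$, which would be delicate: in certain sticking configurations (see Example~\ref{spes1}) the commutation relations of $\alg$ do not directly permit the naive swap of two generators, so a rewriting argument would require a careful multi-layered induction on filtration and number of inversions.

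First I would verify that $\theta_\P$ respects triangular decompositions on both sides: $\theta_\P(U^\pm)\subseteq \alg^\pm$ and $\theta_\P(U^0)\subseteq \alg^0$. This is immediate by inspection of the formulas defining $\theta_\P$: each $\theta_\P(x_{i,r}^\pm)$ is a (convergent) $\C$-linear combination of the $\E^\pm_{i,a,m}$, which generate $\alg^\pm$; each $\theta_\P(\Phi_{i,r})$ is a convergent $\C$-linear combination of the $\H_{i,a,m}$; and $\theta_\P(k_i)=\K_i$. Convergence in the completion topology is ensured because, in every such expansion, the $m$-th summand lies in $F_m(\alg)$.

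Given any $x\in \alg$ and any $N\in \Z_{\geq 0}$, surjectivity of $\uqlg\to \alg/F_N(\alg)$ furnishes $u\in \uqlg$ with $\theta_\P(u)\equiv x \pmod{F_N(\alg)}$. By Drinfeld's triangular decomposition $\uqlg = U^-\cdot U^0\cdot U^+$ one can write $u$ as a finite sum $u = \sum_k u_k^- u_k^0 u_k^+$ with $u_k^\pm\in U^\pm$ and $u_k^0\in U^0$, whence
\beu x \equiv \sum_k \theta_\P(u_k^-)\,\theta_\P(u_k^0)\,\theta_\P(u_k^+) \pmod{F_N(\alg)},\eeu
a finite $\C$-linear combination of triple products from $\alg^-\cdot\alg^0\cdot\alg^+$.

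To conclude, I would iterate: set $x_0:=x$ and, having chosen finite sums of triple products $y_0,\ldots,y_{k-1}$, apply the previous step (with $N=k+1$) to the remainder $x - (y_0+\cdots+y_{k-1}) \in F_k(\alg)$ to produce $y_k$, necessarily lying in $F_k(\alg)$, such that $x - (y_0+\cdots+y_k)\in F_{k+1}(\alg)$. The series $\sum_{k\geq 0} y_k$ then converges to $x$ in the completion topology, exhibiting $x$ as a convergent $\C$-linear combination of triple products from $\alg^-\cdot\alg^0\cdot\alg^+$. The only subtle point -- and the one I would expect to be the main obstacle if it had to be argued from scratch -- is justifying that $\alg^-\cdot\alg^0\cdot\alg^+$ in the completed algebra should be read as the set of such convergent sums; but this is the natural interpretation forced by the topology on $\alg$ itself, and once admitted the argument invokes nothing beyond Theorem~\ref{mainthm} and the standard triangular decomposition of $\uqlg$.
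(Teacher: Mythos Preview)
Your approach is correct and takes a genuinely different route. The paper argues by direct reordering in $\alg$: since $\E^+_{i,a,m}\E^-_{j,b,n} \in \alg^-\cdot\alg^+ + \alg^0$ by (\ref{EpEmrel}), it suffices to show $\H_{i,a,m}\E^-_{j,b,n}\in \alg^-\cdot\alg^0$ (and dually for $\E^+\H$). In the non-sticky case $b\neq aq^{B_{ij}}$ this follows from Proposition~\ref{reEprop}; in the sticky case the paper uses (\ref{EHrel}) to ``solve downwards'' in $m$ until $m=0$, and then invokes the linear relation (\ref{HKrel}) to replace $\H_{i,a,0}$ by $\frac{\K_i-\K_i^{-1}}{q_i-q_i^{-1}} - \sum_{c\neq a}\H_{i,c,0}$, every term of which commutes past $\E^-_{j,aq^{B_{ij}},n}$ by the cases already handled. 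Your route instead transports the problem to $\uqlg$, where triangular decomposition is known, via surjectivity of $\theta_\P$ onto each truncation. This is more conceptual and sidesteps the sticky-case analysis entirely, at the cost of invoking Theorem~\ref{mainthm} and the determinant identity (\ref{detid}) behind Remark~\ref{truncrem}. The paper's argument, by contrast, is self-contained within $\alg$ and makes visible \emph{how} the sticky commutation is resolved --- information your approach hides but which becomes relevant later, when the failure of triangular decomposition in the truncations $\alg/F_N(\alg)$ is analysed.
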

\begin{proof}
Given that $\E^+_{i,a,m} \E^-_{j,b,n} \in \alg^- \cdot \alg^+ + \alg^0$ by relation (\ref{EpEmrel}), it suffices to check that \be\nn \E^+_{i,a,m} \H_{j,b,n}\in \alg^0\cdot \alg^+\quad\text{ and }\quad \H_{i,a,m}\E^-_{j,b,n} = \alg^-\cdot\alg^0.\ee Consider the second of these (the first is similar).   
Whenever $a\neq bq^{-B_{ij}}$, it follows from Proposition \ref{reEprop}.
It remains to consider $\H_{i,a,m} \E^-_{\smash{j,aq^{B_{ij}},n}}$. Whenever $m>0$, (\ref{EHrel}) can be used, in the form
\begin{align} \H_{i,a,m} \E^-_{\smash{j,aq^{B_{ij}},n}} &=  \H_{i,a,m-1} \E^-_{\smash{ j,aq^{B_{ij}},n+1}} \nn\\\nn &+ (q^{- B_{ij}}-q^{B_{ij}}) \E^-_{\smash{j,aq^{B_{ij}},n}} \H_{i,a,m-1}  + q^{- B_{ij}} \E^-_{\smash{j,aq^{B_{ij}},n}} \H_{i,a,m}  - q^{B_{ij}} \E^-_{\smash{j,aq^{B_{ij}},n+1}} \H_{i,a,m-1} .\end{align}
Note that here we are ``solving downwards'', in the sense that  $\H_{i,a,m} \E^-_{\smash{j,aq^{B_{ij}},n}}$ is in $\mc I_{\max(m,n)}$ and yet we are re-writing it in a form that is not manifestly so (if $m\leq n$). Here it is useful to do so because by recursive application of this relation one has 
\be \H_{i,a,m} \E^-_{\smash{j,aq^{B_{ij}},n}} - \H_{i,a,0} \E^-_{\smash{j,aq^{B_{ij}},n+m}} \in \alg^-\cdot \alg^0,\nn\ee
and it remains only to consider $\H_{i,a,0} \E^-_{\smash{j,aq^{B_{ij}},n}}$, $n\in \Z_{\geq 0}$. For such terms, (\ref{EHrel}) yields no relation. 
Instead, one substitutes for $\H_{i,a,0}$ using relation (\ref{HKrel}):
\be \H_{i,a,0} \E^-_{\smash{j,aq^{B_{ij}},n}} = \left(\frac{\K_i-\K_i^{-1}}{q_i-q_i^{-1}} - \sum_{b\in\P_i\setminus\{a\}} \H_{i,b,0}\right) \E^-_{\smash{j,aq^{B_{ij}},n}}.\nn\ee
We have $\sum_{b\in\P_i\setminus\{a\}} \H_{i,b,0} \E^-_{\smash{j,aq^{B_{ij}},n}} \in \alg^-\cdot\alg^0$ by the arguments above; and  $\frac{\K_i-\K_i^{-1}}{q_i-q_i^{-1}} \E^-_{\smash{j,aq^{B_{ij}},n}}\in \alg^-\cdot \alg^0$ by  (\ref{Krels}).  
\end{proof}
However, since the tuple $\P=(\P_i)_{i \in I}$ consists of finite sets, we also have the following.
\begin{prop}\label{nottriang}
$\alg\not\cong_\C \alg^-\otimes \alg^0\otimes \alg^+$.
\end{prop}
\begin{proof} Pick an $i\in I$. We have
\begin{align} \K_i\E^+_{i,a,m} = q_i^2 \E^+_{i,a,m} \K_i &= q_i^2 \E^+_{i,a,m} \K_i^{-1} + q_i^2(q_i-q_i^{-1}) \E^+_{i,a,m} \sum_{b\in \P_i} \H_{i,b,0} \nn\\
&= q_i^4 \K_i^{-1} \E^+_{i,a,m} + q_i^2(q_i-q_i^{-1}) \E^+_{i,a,m} \sum_{b\in \P_i} \H_{i,b,0} \nn\\
&= q_i^4 \K_i \E^+_{i,a,m} - q_i^4(q_i-q_i^{-1}) \sum_{b\in \P_i} \H_{i,b,0} \E^+_{i,a,m} 
+ q_i^2(q_i-q_i^{-1}) \E^+_{i,a,m} \sum_{b\in \P_i} \H_{i,b,0}\nn  \end{align}
Since $\P_i$ is finite, there exists an $a\in \P_i$ such that $aq_i^{-2}\notin \P_i$. Pick any such $a$. Then Proposition \ref{reEprop} allows $\E^+_{i,a,m} \H_{i,b,0}$ to be reexpressed as a sum of monomials $\H_{i,b,n} \E^+_{i,a,m+k}$ with $n,k\in \Z_{\geq 0}$. On doing so, and re-arranging, one has the relation 
\be 0 = \frac{1}{q_i - q_i^{-1}} \K_i \E_{i,a,m}^+
    + \sum_{b\in \P_i} \sum_{n,k=0}^\8  \H_{i,b,n} \E^+_{i,a,m+k} \frac{a^k}{k!}\left(\pd a\right)^k \frac{b^n}{n!}\left(\pd b\right)^n \frac{b}{aq_i^{-2} - b}.\label{nottrel}\ee
\end{proof}
\begin{rem}
Another way to derive (\ref{nottrel}) is to consider the image of the relation \be\label{relll}x^+_i(z) \phi^+_i(1/w) =  \, q_i^{-2} \frac{1 - q_i^2 z/w}{1 - q_i^{-2}z/w} \, \phi^+_i(1/w) x^+_{i}(z)\ee under the homomorphism $\theta_\P$. We have \be\nn\theta_\P(\phi^+_i(1/w)) = \K_i + (q_i-q_i^{-1}) \sum_{b\in \P_i} \sum_{n=0}^\8 \H_{i,b,n}  \frac{b^n}{n!} \left(\pd b\right)^n \frac{b/w}{1-b/w}.\ee Comparing the formal partial fraction decompositions of the left- and right-hand sides of (\ref{relll}), one recovers  (\ref{nottrel}) by considering the coefficients of the poles at $w= aq_i^{-2}$ (which occur only on the right-hand side, provided $aq_i^{-2}\notin \P_i$). 
\end{rem}

\subsection{Results and examples in rank 1}\label{sl2sec} For this subsection, set $\g=\mf{sl}_2$. The Dynkin  diagram $A_1$ has one node only and we omit its label $i\in I=\{1\}$ throughout. 
We consider the case $\P = \{a,aq^2,aq^4,\dots, aq^K\}$ and write, by abuse of notation,
\be \E^+_{k,m} := \E^+_{aq^k,m},\quad \E^-_{k,m} := \E^-_{aq^k,m},\quad \H_{k,m} := \H_{aq^k,m},\quad 
Y_k := Y_{aq^k},\quad A_k := A_{aq^k}. \nn\ee 
We have the principal gradation of $\alg$,
\be \alg^\pm = \bigoplus_{r\in \Z_{\geq 1}} \alg^{(\pm r)}, \quad\text{where}\quad 
\alg^{(\pm r)} := \left\{ x\in \alg^\pm : \K x\K^{-1} = q^{\pm 2r} x \right\}. \nn\ee

Let $B_r$ be the set of all monomials of the form
$\E^-_{k_1,m_1} \E^-_{k_2,m_2} \dots \E^-_{k_r,m_r}$
such that, for each $t\in \{1,2,\dots,r-1\}$, $k_t\leq k_{t-1}$ and if $k_t=k_{t+1}$ then $m_t < m_{t+1}$.  
\begin{prop} $B_r$ is a $\C$-basis of $\alg^{(-r)}$.\label{sl2basisprop} 
\end{prop}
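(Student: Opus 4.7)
The plan is to prove spanning and linear independence separately, in both cases exploiting the filtration $F_\bullet(\alg)$ of \S\ref{completion} restricted to $\alg^{(-r)}$.

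For spanning, I would apply Proposition \ref{reEprop} iteratively as a rewriting rule. Suppose $\E^-_{k_1,m_1}\cdots\E^-_{k_r,m_r}$ is a degree-$(-r)$ monomial not in $B_r$, so some adjacent pair $\E^-_{k_t,m_t}\E^-_{k_{t+1},m_{t+1}}$ violates the ordering. In every such bad case the sticking hypothesis $k_t = k_{t+1}-2$ of Proposition \ref{reEprop} fails automatically: if $k_t>k_{t+1}$ then $k_t > k_{t+1}-2$, while if $k_t=k_{t+1}$ then $k_t\neq k_t-2$. Hence Proposition \ref{reEprop} applies and re-expresses the pair as a formal series of $\E^-_{k_{t+1},n_{t+1}}\E^-_{k_t,n_t}$-products, with all contributions beyond the leading term lying strictly deeper in $F_\bullet$. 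Iterating in every offending position, the original monomial becomes congruent modulo $F_N(\alg)$ to a finite $\C$-linear combination of elements of $B_r$, for every $N$; since $\alg$ is complete, these partial normalisations assemble into a convergent equality in $\alg^{(-r)}$.

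For linear independence I would pass to the associated graded $\gr^F\alg^{(-r)}$. Since terms of the form $\E^-_{k,m+1}\E^-_{k',n}$ already lie in $F_{m+n+1}(\alg)$, only the ``unshifted'' part of the defining relation (\ref{EErel}) survives in $\gr_{m+n}^F$, and this yields three monomial rules: (a)~for $k\notin\{k',k'\pm 2\}$, a scalar commutation $\E^-_{k,m}\E^-_{k',n} = \frac{q^{k-2}-q^{k'}}{q^k-q^{k'-2}}\,\E^-_{k',n}\E^-_{k,m}$ with nonzero coefficient (by transcendence of $q$); (b)~in the sticking configuration $k=k'+2$, the degenerate relation $\E^-_{k,m}\E^-_{k',n}=0$, so in $\gr^F$ a larger $k$ immediately to the left of an adjacent smaller $k$ vanishes; (c)~for $k=k'$, the anti-commutation $\E^-_{k,m}\E^-_{k,n}+\E^-_{k,n}\E^-_{k,m}=0$, whence $(\E^-_{k,m})^2=0$. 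Under these rules the $B_r$-ordering is the only normal form that survives reordering: each $k$-block contributes the exterior algebra on its $\E^-_{k,m}$'s, distinct blocks are stacked in the unique non-decreasing order of $k$, and any wrong-order adjacency is killed by (b). Confirming that $B_r$ is a basis of $\gr^F\alg^{(-r)}$ then promotes to a basis of $\alg^{(-r)}$: any finite linear dependence among elements of $B_r$ would specialise to a non-trivial relation at the highest $F$-level appearing, contradicting independence in the graded piece.

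The main difficulty will be verifying that $B_r$ is actually a basis (not merely spanning) of $\gr^F\alg^{(-r)}$, i.e.\ that the three rules (a)--(c) conceal no further hidden relation. A Composition-Diamond confluence check on length-three overlaps such as $\E^-_{k'+2,m_1}\E^-_{k',m_2}\E^-_{k',m_3}$ or $\E^-_{k,m_1}\E^-_{k,m_2}\E^-_{k-2,m_3}$ should suffice, but is delicate precisely where (b) and (c) interact. A cleaner alternative that sidesteps the graded analysis is to prove linear independence directly: pull back, via the homomorphism $\theta_\P$ of Theorem \ref{mainthm}, an irreducible $\uqlg$-module $L(\bs\gamma)\in\Ob(\cc_\P)$ with $\bs\gamma$ a sufficiently generic monomial in the fundamental $\ell$-weights $Y_{aq^k}$, and check by explicit computation on a highest-$\ell$-weight vector that distinct normal-form $B_r$-monomials act as linearly independent operators.
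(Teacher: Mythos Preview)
Your spanning argument matches the paper's: both rewrite out-of-order adjacent pairs using (\ref{EErel}), and your key observation---that the obstructive case $k_t=k_{t+1}-2$ of Proposition~\ref{reEprop} never occurs when the pair is out of order---is precisely what makes the scheme terminate. The paper writes the rewriting more concretely as two finite rules read off directly from (\ref{EErel}) (one for $\E^-_{k,m}\E^-_{k,m}$, one for $k>\ell$ or $k=\ell,\,m>n$) rather than going through the infinite series of Proposition~\ref{reEprop}; your version is equivalent, but you should say explicitly that when $k_t=k_{t+1}$ and $m_t=m_{t+1}$ the leading swapped term is $-1$ times the original monomial, so one brings it across and divides by $2$ to conclude it lies strictly deeper in $F_\bullet$.

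For linear independence the paper is far terser than you: it simply remarks that elements of $B_r$ admit no further rewriting and that ``the defining relation (\ref{EErel}) does not give any linear relations between elements of $B_r$''---in effect observing that each instance of (\ref{EErel}) involves monomials in both orderings and so cannot be supported entirely inside $B_r$. Your proposed route through $\gr^F\alg^{(-r)}$ with a confluence check, or alternatively via explicit modules pulled back along $\theta_\P$, is a more careful justification of the same claim; it goes beyond what the paper supplies, but it does close a gap the paper leaves implicit (namely, that the two-sided ideal generated by (\ref{EErel}) meets the span of $B_r$ only in zero, which is not immediate from the degree-two observation alone).
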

\begin{proof} 
$\alg^{(-r)}$ is spanned by the monomials of the form $\E^-_{k_1,m_1} \E^-_{k_2,m_2} \dots \E^-_{k_r,m_r}$. By re-writing neighbouring factors according to the rules below, which follow from the defining relation (\ref{EErel}), any such monomial can be expressed as a linear combination of elements of $B_r$.
\begin{enumerate}[Rule i):]
\item\be\E^-_{k,m} \E^-_{k,m}\,\, \underset\mapsto= \,\,\frac{1}{1-q^{-2}}
                \bigl( - \E^-_{k,m+1} \E^-_{k,m} + q^{-2} \E^-_{k,m} \E^-_{k,m+1}\bigr).\nn\ee
\item If $k>l$, of if $k=l$ and $m>n$, then
\begin{align}
\E^-_{k,m} \E^-_{\ell,n}\,\, &\underset\mapsto= \,\,\frac{1}{q^k-q^{\ell-2}}
                \bigl( -q^k \E^-_{k,m+1} \E^-_{\ell,n} + q^{\ell-2} \E^-_{k,m} \E^-_{\ell,n+1}\nn\\
       &  \qquad\qquad\qquad +q^{k-2} \E^-_{\ell,n} \E^-_{k,m+1} - q^\ell \E^-_{\ell,n+1}\E^-_{k,m} 
                      + (q^{k-2} - q^\ell) \E^-_{\ell,n} \E^-_{k,m} \bigr).\nn
\end{align}
\end{enumerate} 
For elements of $B_r$ no further re-writing is possible, and it is moreover clear that the defining relation (\ref{EErel}) does not give any linear relations between elements of $B_r$.
\end{proof}

Suppose now that $\bs\gamma=(\gamma(u))$ is a dominant $\ell$-weight, cf. \S\ref{lweightssec}. The irreducible $\L(\bs\gamma)$ is isomorphic to a tensor product of evaluation modules \cite{CPsl2}. This provides an explicit basis for $\L(\bs\gamma)$, but one which is not generally compatible with its decomposition into $\ell$-weight spaces. In cases when all $\ell$-weight spaces have dimension one (the module $\L(\bs\gamma)$ is then called thin/quasi-minuscule, and in type $A_1$ this happens precisely when all poles of $\gamma(u)$ are simple) explicit $\ell$-weight bases can be found in \cite{Thoren,YoungZegers}. However, when $\gamma(u)$ has poles of higher order, it is a nontrivial task to find $\ell$-weight bases of $\L(\bs\gamma)$ and thence, in particular, to determine the Jordan block structure of the generators $\phi^\pm_{\pm r}$. 
\label{basessec}

Figure \ref{basesfig} shows three examples illustrating the use of the algebra $\alg$ in constructing such $\ell$-weight bases. In each case, the graph of the $q$-character is shown together with a basis of the representation consisting of vectors in $B\on v$, where
\be B:= \bigcup_r B_r \nn\ee 
is a basis of $\alg^-$ by the above proposition, and where $v$ is defined to be a simultaneous eigenvector of $\K$ and $\H_{k,m}$ such that $\E^+_{k,m}\on v=0$ for all $k,m$. The eigenvalues $\lambda$ of $\K$ and $\lambda_{k,m}$ of $\H_{k,m}$ are read off from the partial fraction decomposition of $\gamma(u)$ according to  -- cf.  \S\ref{lweightssec} and (\ref{Hmap}) --
\be \gamma(u) = \lambda + \sum_{k,m}  \lambda_{k,m} \frac{a^m}{m!} \left(\pd a\right)^m \frac{aq^ku}{1-aq^k u}. \label{pfrac}\ee

In each case, the action of the generators $\H_{k,m}$ and $\E^+_{k,m}$ in this basis can be computed 
using the defining relations and their known action on the highest weight vector $v$. For example, in the module $L(Y_0^2 Y_2)$, one has
\be \E^+_{0,0} \on (\E^-_{0,1} \E^-_{2,0}\on v) = \H_{0,1} \E^-_{2,0} \on v
                                                = -(q^{2} - q^{-2}) \E^-_{2,0} \H_{0,0} \on v
                                                = (q^2-q^{-2}) \E^-_{2,0} \on v.\nn\ee

The action of the generators $\E^-_{k,m}$ is found by direct calculation, working in the induced $\alg$-module $\alg \otimes_{\alg^0\otimes \alg^+} \C v$ whose irreducible quotient is $\L(\bs\gamma)$. 
In the first two examples in Figure \ref{basesfig}, namely $L(Y_0Y_2^2)$ and $L(Y_0^2Y_2)$, one finds that the set of all vectors in $B\on v$ that are not zero in $L(\bs\gamma)$ form a basis of $\L(\bs\gamma)$. It is natural to ask whether this property might be true in general (cf. Lusztig's canonical bases \cite{Lusztig} -- see e.g. Theorem 14.2.5 in \cite{CPbook} or Theorem 11.16 in \cite{Jantzen}). 
The third example, $L(Y_0^3Y_0^2)$, shows, however, that this is not so.
The $\ell$-weight space of $\ell$-weight $Y_0^2Y_4^{-1}$ has dimension 4, but one finds that 5 vectors of $B.v\subset \alg \otimes_{\alg^0\otimes \alg^+} \C v$  are not singular, namely
\be   \E^-_{0,0} \E^-_{2,0} \on v,\quad  \E^-_{0,0} \E^-_{2,1} \on v,\quad 
\E^-_{0,1} \E^-_{2,0} \on v ,\quad  \E^-_{0,1} \E^-_{2,1} \on v,\quad  \E^-_{0,2} \E^-_{2,0} \on v .\nn\ee
Some linear combination must therefore be singular. And indeed, 
$\E^+_{0,0} \E^-_{0,2} \E^-_{2,0} \on v = \H_{0,2} \E^-_{2,0} \on v = \H_{0,1} \E^-_{2,1} \on v$, while also 
$\E^+_{0,0} \E^-_{0,1} \E^-_{2,1} \on v = \H_{0,1} \E^-_{2,1} \on v$. In this way one checks  that $\left(\E^-_{0,2} \E^-_{2,0}- \E^-_{0,1} \E^-_{2,1} \right) \on v\notin B\on v$ is a singular vector. Thus $\E^-_{0,2}\on (\E^-_{2,0} \on v) = \E^-_{0,1} \E^-_{2,1} \on v$ in $L(Y_0^3 Y_2^2)$. 


\begin{figure}
\begin{itemize} 
\item[$L(Y_0Y_2^2)$:]  $\K\on v= q^3 v,\quad  \H_{2,0} \on v = (q^2 + 1+q^{-2})v,\quad \H_{2,1}\on v= (q^2-q^{-2})v $.
\be
\begin{tikzpicture}    
\matrix (m) [matrix of math nodes, row sep=2em, column sep=1em, text height=1.5ex, text depth=1ex]    
{  Y_0 Y^2_2  &     & \\    
              &   2Y_0 Y_2 Y_4^{-1}  &   \\
                   &   Y_4^{-1}      &  Y_0 Y_4^{-2}   \\
                   &   &   Y_2^{-1}Y_4^{-2} \\ };   
\path[->,thick,font=\scriptsize]    
(m-1-1) edge node[fill=white,inner sep=1pt] {$ A_{2}^{-1}  $} (m-2-2)    
(m-2-2) edge node[fill=white,inner sep=1pt] {$ A_{2}^{-1}  $} (m-3-3)    
(m-3-2) edge node[fill=white,inner sep=1pt] {$ A_{2}^{-1}  $} (m-4-3)    
(m-2-2) edge node[fill=white,inner sep=1pt] {$ A_{0}^{-1}  $} (m-3-2)    
(m-3-3) edge node[fill=white,inner sep=1pt] {$ A_{0}^{-1}  $} (m-4-3)    
;
\end{tikzpicture}\begin{tikzpicture}    
\matrix (m) [matrix of math nodes, row sep=2em, column sep=1em, text height=1.5ex, text depth=1ex]    
{  v  &     & \\    
              &  \E^-_{2,0}\on v,\E^-_{2,1}\on v  &   \\
                   &  \E^-_{0,0} \E^-_{2,0}\on v   & \E^-_{2,0} \E^-_{2,1}\on v  \\
                   &   &  \E^-_{0,0} \E^-_{2,0} \E^-_{2,1}\on v   \\ };   
\path[->,thick,font=\scriptsize]    
(m-1-1) edge (m-2-2)    
(m-2-2) edge (m-3-3)    
(m-3-2) edge (m-4-3)    
(m-2-2) edge (m-3-2)    
(m-3-3) edge (m-4-3)    
;
\end{tikzpicture}
\nn\ee
\item[$L(Y_0^2Y_2)$:]  $\K\on v= q^3 v,\quad \H_{0,0} \on v = -v,\quad \H_{2,0}\on v= (q^2+2+q^{-2})v$.
\be
\begin{tikzpicture}    
\matrix (m) [matrix of math nodes, row sep=2em, column sep=1em, text height=1.5ex, text depth=1ex]    
{  Y_0^2 Y_2  &      \\    
  Y_0      &   Y_0^2Y_4^{-1}   \\
                   &   2Y_0 Y_2^{-1} Y_4^{-1}    \\
                   &    Y_2^{-2}Y_4^{-1}   \\    };   
\path[->,thick,font=\scriptsize]    
(m-1-1) edge node[fill=white,inner sep=1pt] {$ A_{2}^{-1}  $} (m-2-2)    
(m-2-1) edge node[fill=white,inner sep=1pt] {$ A_{2}^{-1}  $} (m-3-2)    
(m-1-1) edge node[fill=white,inner sep=1pt] {$ A_{0}^{-1}  $} (m-2-1)    
(m-2-2) edge node[fill=white,inner sep=1pt] {$ A_{0}^{-1}  $} (m-3-2)    
(m-3-2) edge node[fill=white,inner sep=1pt] {$ A_{0}^{-1}  $} (m-4-2)    
;
\end{tikzpicture}
\begin{tikzpicture}    
\matrix (m) [matrix of math nodes, row sep=2em, column sep=1em, text height=1.5ex, text depth=1ex]    
{  v  &     & \\    
  \E^-_{0,0} \on v      &   \E^-_{2,0}\on v   \\
                   &   \E^-_{0,0} \E^-_{2,0} \on v,\E^-_{0,1} \E^-_{2,0} \on v    \\
                   &    \E^-_{0,0}\E^-_{0,1} \E^-_{2,0} \on v   \\   };   
\path[->,thick,font=\scriptsize]    
(m-1-1) edge  (m-2-2)    
(m-2-1) edge  (m-3-2)    
(m-1-1) edge  (m-2-1)    
(m-2-2) edge  (m-3-2)    
(m-3-2) edge  (m-4-2)    
;
\end{tikzpicture}
\nn\ee
\item[$L(Y_0^3Y_2^2)$:]  $\K\on v= q^5 v,\quad \H_{0,0} \on v = v,\quad \H_{2,0}\on v = (q^4+q^2+q^{-2} + q^{-4}) v,\quad \H_{2,1}\on v= (q^4+2q^2-2q^{-2}-q^{-4})v$.
\be
\begin{tikzpicture}    
\matrix (m) [font=\scriptsize,matrix of math nodes, row sep=2em, column sep=0em, text height=1.5ex, text depth=1ex]    
{  Y_0^3 Y_2^2  &     & \\    
  Y_0^2 Y_2     &   2Y_0^3Y_2Y_4^{-1}  &   \\
                   &   4Y_0^2 Y_4^{-1}     &  Y_0^3Y_4^{-2} \\
                   &   2Y_0Y_2^{-1}Y_4^{-1}&3Y_0^2Y_2^{-1}Y_4^{-2}\\    
                   &                          &3Y_0  Y_2^{-2}Y_4^{-2}\\
                   &                          & Y_2^{-3}Y_4^{-2}. \\ };   
\path[->,thick,font=\scriptsize]    
(m-1-1) edge node[fill=white,inner sep=1pt] {$ A_{2}^{-1}  $} (m-2-2)    
(m-2-1) edge node[fill=white,inner sep=1pt] {$ A_{2}^{-1}  $} (m-3-2)    
(m-2-2) edge node[fill=white,inner sep=1pt] {$ A_{2}^{-1}  $} (m-3-3)    
(m-3-2) edge node[fill=white,inner sep=1pt] {$ A_{2}^{-1}  $} (m-4-3)       
(m-4-2) edge node[fill=white,inner sep=1pt] {$ A_{2}^{-1}  $} (m-5-3)       
(m-1-1) edge node[fill=white,inner sep=1pt] {$ A_{0}^{-1}  $} (m-2-1)    
(m-2-2) edge node[fill=white,inner sep=1pt] {$ A_{0}^{-1}  $} (m-3-2)    
(m-3-2) edge node[fill=white,inner sep=1pt] {$ A_{0}^{-1}  $} (m-4-2)    
(m-3-3) edge node[fill=white,inner sep=1pt] {$ A_{0}^{-1}  $} (m-4-3)    
(m-4-3) edge node[fill=white,inner sep=1pt] {$ A_{0}^{-1}  $} (m-5-3)    
(m-5-3) edge node[fill=white,inner sep=1pt] {$ A_{0}^{-1}  $} (m-6-3)       
;
\end{tikzpicture}
\begin{tikzpicture}    
\matrix (m) [font=\scriptsize,matrix of math nodes, row sep=2em, column sep=-4em, text height=1.5ex, text depth=1ex]    
{  v  &     & \\    
  \E^-_{0,0}\on v     &   \E^-_{2,0}\on v, \E^-_{2,1}\on v  &   \\
           &   \E^-_{0,0} \E^-_{2,0} \on v,  \E^-_{0,0} \E^-_{2,1} \on v, 
  \E^-_{0,1} \E^-_{2,0} \on v ,  \E^-_{0,1} \E^-_{2,1} \on v  &  \E^-_{2,0}\E^-_{2,1}\on v \\
                 &   \E^-_{0,0}\E^-_{0,1}\E^-_{2,0} \on v,\E^-_{0,0}\E^-_{0,1}\E^-_{2,1} \on v 
     &\E^-_{0,0} \E^-_{2,0} \E^-_{2,1} \on v,\E^-_{0,1} \E^-_{2,0} \E^-_{2,1} \on v,\E^-_{0,2} \E^-_{2,0} \E^-_{2,1} \on v\\    
                   &       &\E^-_{0,0} \E^-_{0,2} \E^-_{2,0} \E^-_{2,1} \on v,\E^-_{0,1} \E^-_{0,2} \E^-_{2,0} \E^-_{2,1} \on v,\E^-_{0,0} \E^-_{0,1} \E^-_{2,0} \E^-_{2,1} \on v\\
                   &                          & \E^-_{0,0} \E^-_{0,1} \E^-_{0,2}\E^-_{2,0} \E^-_{2,1} \on v. \\ };   
\path[->,thick,font=\scriptsize]    
(m-1-1) edge  (m-2-2)    
(m-2-1) edge (m-3-2)    
(m-2-2) edge  (m-3-3)    
(m-3-2) edge  (m-4-3)       
(m-4-2) edge  (m-5-3)       
(m-1-1) edge  (m-2-1)    
(m-2-2) edge  (m-3-2)    
(m-3-2) edge  (m-4-2)    
(m-3-3) edge  (m-4-3)    
(m-4-3) edge  (m-5-3)    
(m-5-3) edge  (m-6-3)       
;
\end{tikzpicture}
\nn\ee
\end{itemize}
\begin{caption}{Examples of $\ell$-weight bases of simple $U_q(\mc L \mf{sl}_2)$-modules. See \S\ref{basessec}.\label{basesfig}} 
\end{caption}
\end{figure}

\subsection{Comment on truncations and triangular decompositions.} 
Recall -- see Remark \ref{truncrem} -- that for every $V\in \Ob(\cc_\P)$ there is an $M$ such that $V$ is the pull-back of a representation of the truncated algebra $\alg/\mc I_{M+1}$. Here we consider how highest weight $\uqlg$-modules can fail to be pull-backs of highest weight $\alg/\mc I_N$-modules if $N$ is too small. 

Let $\bs\gamma=(\gamma_i(u))_{i\in I}$ be the highest $\ell$-weight. If $\gamma_i(u)$ has any pole of order $>N$ then the problem is obvious, cf. (\ref{pfrac}). For instance, the first example in Figure \ref{basesfig} is not a pull-back of a representation of $\alg/\mc I_1$.   
But there are more subtle possibilities, as the following pair of examples illustrate.

We set $\P = \{aq^k:0\leq k\leq K\}$ for some $a\in \Cx$ and $K\in \Z_{\geq 0}$. 
\begin{itemize}
\item Consider $\L(Y_{1,a} Y_{2,aq})$ in type $A_2$. All poles of the functions $(Y_{1,a}Y_{2,aq})_i(u)$, $i=1,2$, are simple. Yet $\L(Y_{1,a} Y_{2,aq})$ is not thin (indeed, $\chi_q(\L(Y_{1,a}Y_{2,aq})) = Y_{1,a} Y_{2,aq} + Y_{2,aq}^2 Y_{1,aq^2}^{-1} + 2Y_{2,aq} Y_{2,aq^3}^{-1} + \dots$) so it should not be a pull-back of a representation of $\alg/\mc I_1$. 
Suppose it were.
Then this representation would have to be highest weight with highest weight vector $v$ such that 
$\H_{1,a,0}\on v= v$, $\H_{2,aq,0}\on v=v$ and all others zero. Now, in $\alg/\mc I_1$, one has the relation $\E^-_{1,a,0} \H_{2,aq,0} = 0$.  Hence $\E^-_{1,a,0} \H_{2,aq,0}\on v=0$ and therefore $\E^-_{1,a,0}\on v=0$. But then $\H_{1,a,0}\on v = [\E^+_{1,a,0},\E^-_{1,a,0}] \on v = 0$,  which is a contradiction unless  $v=0$.   
\item Similarly, consider $\L(Y_{1,a}Y_{3,aq^2})$ in type $A_3$. It is generated by a highest weight vector $v$ such that $\H_{1,a,0} \on v= \H_{3,aq^2,0}\on v =v$. Hence $\E^-_{2,aq,0} \E^-_{1,a,0} \on v  = \E^-_{2,aq,0} \E^-_{1,a,0} \H_{3,aq^2,0}\on v = 
 \E^-_{2,aq,0} \H_{3,aq^2,0} \E^-_{1,a,0}\on v = 0$ since $\E^-_{2,aq,0} \H_{3,aq^2,0}=0$ in $\alg/\mc I_1$. But, once we compute $\H_{2,aq,0} \E^-_{1,a,0}\on v =   \E^-_{1,a,0} \on v$, we find that $\E^+_{1,a,0} \E^+_{2,aq,0}  \E^-_{2,aq,0} \E^-_{1,a,0}  \on v
   =  \E^+_{1,a,0} \E^-_{1,a,0} \on v = \H_{1,a,0} \on v = v$
which again is a contradiction unless $v=0$.
\end{itemize}
What underlies these examples is the fact that in the truncated algebras $\alg/\mc I_N$ one has relations like $\E^-_{i,aq^{B_{ii}},N-1} \H_{i,a,N-1} = 0$. This relation holds for all $a$ such that $\{a,aq^{B_{ii}}\}\subset \P_i$. (By contrast, in Proposition \ref{nottrel}, the failure of triangularity had to do with ``end'' points, i.e. to points $a\in \P_i$ such that for example $aq_i^{-2}\notin \P_i$.)

\subsection{A ``rational limit'' of $\alg$}
Just as Yangians are related to quantum loop algebras, \cite{GautamToledanoLaredo,GuayMa}, one might expect there to be a ``rational'' version of $\alg$, whose defining relations do not involve a deformation parameter. Here we merely note that a natural candidate can be defined.

For this subsection (only) let $q= e^h$ with $h$ an indeterminate. Pick finite sets $\overline \P_i\subset \C$ for each $i\in I$ and let $\P_i = \{q^k:k\in \overline \P_i\}$. We define new generators $\Er^\pm_{i,k,m}$, $\Hr^\pm_{i,k,m}$ and $\HHr_i$ by
\be \E^\pm_{i,k,m} := h^m \Er^\pm_{i,q^k,m} ,  \qquad
   \H^\pm_{i,k,m} := h^m \Hr^\pm_{i,q^k,m} , \qquad \K_i := e^{hr_i \HHr_i}.\nn\ee
Note that $\frac{\K_i-\K_i^{-1}}{q_i-q_i^{-1}} = \HHr_i + O(h)$. If one then keeps the leading order in $h$ of the defining relations in the definition, \S\ref{algdef}, of $\alg$, one obtains  the following definition.  
Let $\algr$ be the associative unital algebra over $\C$ generated by
\be \Er^\pm_{i,k,m} \quad \Hr_{i,k,m}, \quad \HHr_i, \quad i\in I,\, k\in \overline \P_i,\, m\in \Z_{\geq 0},\nn\ee
subject to the following relations for all $i,j\in I$, $k,\ell\in \overline \P_i$ and $m,n\in\Z_{\geq 0}$:
\be \left[\HHr_i, \Er^\pm_{j,k,m}\right] = \pm C_{ij} \Er^\pm_{j,k,m}, \nn\label{HHErrel}
\quad \left[ \HHr_i,\HHr_j\right] = 0 \ee\be  \left[ \HHr_i , \Hr_{j,k,m}\right] = 0,\quad \left[\Hr_{i,k,m} \Hr_{j,\ell,n} \right]= 0 \nn\ee
\be \left[ \Er^+_{i,k,m}, \Er^-_{j,\ell,n} \right] = \delta_{ij} \delta_{k\ell} \Hr_{i,k,m+n} \nn\ee
\bea
&&(k-\ell\mp B_{ij}) \Er^\pm_{i,k,m} \Er^\pm_{j,\ell,n} + \Er^\pm_{i,k,m+1} \Er^\pm_{j,\ell,n} - \Er^\pm_{i,k,m} \Er^\pm_{j,\ell,n+1} \nn\\
&=& (k\pm B_{ij}-\ell) \Er^\pm_{j,\ell,n} \Er^\pm_{i,k,m}  +  \Er^\pm_{j,\ell,n} \Er^\pm_{i,k,m+1}  - \Er^\pm_{j,\ell,n+1} \Er^\pm_{i,k,m} \nn\label{EErrel}
\eea
\bea
&&(k-\ell\mp B_{ij}) \Hr_{i,k,m} \Er^\pm_{j,\ell,n} + \Hr_{i,k,m+1} \Er^\pm_{j,\ell,n} -  \Hr_{i,k,m} \Er^\pm_{j,\ell,n+1} \nn\\
&=& (k\pm B_{ij}-\ell) \Er^\pm_{j,\ell,n} \Hr_{i,k,m}  + \Er^\pm_{j,\ell,n} \Hr_{i,k,m+1}  - \Er^\pm_{j,\ell,n+1} \Hr_{i,k,m} \nn\label{EHrrel}
\eea
\be \sum_{k\in \Z} \Hr_{i,k,0} = \HHr_i \nn\label{HKrrel},\ee
together with Serre relations 
\be\begin{split} \sum_{r=0}^{s}(-1)^r
{\binom s r} 
\Er^\pm_{i,k\mp B_{ij} \mp r B_{ii},m_{r+1}}\ldots
\Er^\pm_{i,k\mp B_{ij} \mp (s-1) B_{ii},m_s}  \Er^\pm_{j,k,n} \Er^\pm_{i,k\mp B_{ij},m_1} \ldots \Er^\pm_{i,k\mp B_{ij}  \mp (r-1) B_{ii} ,m_{r}} =0,\nn\label{ratcycleser}\end{split}\ee
where $s=1-C_{ij}$, for each $i\neq j$ such that $B_{ij}\neq 0$, for all $m_1,\dots,m_s,n\in \Z_{\geq 0}$, and for all $k\in \overline\P_j$ such that $\{k\mp B_{ij} \mp (t-1) B_{ii}: 1\leq t\leq s\}\subset \overline\P_i$.

\def\cprime{$'$}
\providecommand{\bysame}{\leavevmode\hbox to3em{\hrulefill}\thinspace}
\providecommand{\MR}{\relax\ifhmode\unskip\space\fi MR }
\providecommand{\MRhref}[2]{%
  \href{http://www.ams.org/mathscinet-getitem?mr=#1}{#2}
}
\providecommand{\href}[2]{#2}


\begin{thebibliography}{MY12b}





\bibitem[Bec94]{Beck}
J.~Beck,\emph{Braid group action and quantum affine algebras}, 
Comm. Math. Phys. \textbf{165} (1994), no. 3, 555–-568.







\bibitem[CH10]{CHbeyondKR}
V.~Chari, D.~Hernandez, \emph{{Beyond Kirillov-Reshetikhin modules}},
  Contemp. Math. \textbf{506} (2010), 49--81.

\bibitem[CM05]{CMblocks}
V.~Chari, A.~Moura, \emph{{Characters and blocks for finite-dimensional representations of quantum affine algebras}},
Int. Math. Res. Not. \textbf{2005} no. 5 (2005), 257--298.


\bibitem[CP91]{CPsl2}
V.~Chari, A.~Pressley, \emph{Quantum affine algebras}, Comm. Math. Phys.
  \textbf{142} (1991), no.~2, 261--283. 

\bibitem[CP94a]{CPbook}
V.~Chari, A.~Pressley, {A guide to quantum groups}, Cambridge, UK: Univ. Pr., 1994.

\bibitem[CP94b]{CP94}
V.~Chari, A.~Pressley, \emph{{Quantum affine algebras and their representations}}, CMS Conf.
  Proc. \textbf{16} (1994), 59--78.

 

 





\bibitem[CC05]{ChudiClaudio}
W.~ Chu, L.~V.~di~Claudio, \emph{Binomial Determinant Evaluations}
Annals of Combinatorics \textbf{9} (2005) 363-377.

 
\bibitem[DJ00]{DJ}
J.~Ding, N.~Jing,
\emph{On a combinatorial identity},
Int. Math. Res. Not. \textbf{6} (2000), 325--332.

\bibitem[Dri87]{Drinfeld1}
V.~G.~Drinfeld, \emph{{Quantum groups}}, Proc. Int. Cong. Math. (Berkeley,1986)
\textbf{1} (1987), 798--820.


\bibitem[Dri88]{Drinfeld}
V.~G. Drinfeld, \emph{{A new realization of Yangians and quantized affine algebras}},
  Sov. Math. Dokl. \textbf{36} (1988), 212--216.


\bibitem[FM01]{FM}
E.~Frenkel, E.~Mukhin, \emph{{Combinatorics of q-characters of
  finite-dimensional representations of quantum affine algebras}}, Commun.
  Math. Phys. \textbf{216} (2001), 23--57.

\bibitem[FM02]{FMgl8}
E.~Frenkel, E.~Mukhin, \emph{The {H}opf algebra {${\rm Rep}\,U_q\widehat{\mf{gl}}_\infty$}}, Selecta Math. (N.S.) \textbf{8} (2002), no.~4, 537--635. 


\bibitem[FR98]{FR}
E.~Frenkel, N.~Reshetikhin, \emph{{The q-characters of representations of
  quantum affine algebras and deformations of W-algebras}}, Contemp. Math.
  \textbf{248} ({1998}), 163--205.

\bibitem[GM10]{GuayMa}
N.~Guay, X.~Ma, \emph{From quantum loop algebras to Yangians}, 
J. London Math. Soc. (2012)
doi: 10.1112/jlms/jds021

\bibitem[GT10]{GautamToledanoLaredo}
S.~Gautam, V.~Toledano-Laredo, \emph{Yangians and quantum loop algebras}, Selecta Math. \textbf{19} (2013), no. 2, 271–-336,
doi: 10.1007/s00029-012-0114-2


\bibitem[GV93]{GV}
V.~Ginzburg and E.~Vasserot.
\emph{Langlands reciprocity for affine quantum groups of type $A_n$}.
Int. Math. Res. Not., \textbf{1993} (1993) no 3., 67--85.

\bibitem[Her05]{HernandezFusion}
D.~Hernandez, \emph{Representations of quantum affinizations and fusion product},
Transformation Groups \textbf{10} (2005), no. 2, 163--200.

\bibitem[Her06]{HernandezKR}
D.~Hernandez, \emph{{The Kirillov-Reshetikhin conjecture and solutions of
  T-systems}}, J. Reine Angew. Math. \textbf{2006} (2006), 63--87.

\bibitem[Her07a]{HernandezMinAff}
D.~Hernandez, \emph{{On minimal affinizations of representations of quantum
  groups}}, Comm. Math. Phys. \textbf{277} (2007), 221--259.

\bibitem[Her07b]{HernandezFusion2}
D.~Hernandez, \emph{Drinfeld coproduct, quantum fusion tensor category and applications},
Proc. London Math. Soc. (2007) \textbf{95} (3): 567--608


\bibitem[HL10]{HernandezLeclerc}
D.~Hernandez, B.~Leclerc, \emph{Cluster algebras and quantum affine
  algebras}, Duke Math. J. \textbf{154} (2010), no.~2, 265--341.


\bibitem[Jan96]{Jantzen}
J.~C.~Jantzen, {Lectures on Quantum Groups} AMS 1996.  

\bibitem[Jin98]{Jing}
N.~Jing,
\emph{Quantum Kac-Moody Algebras and Vertex Representations}
Lett. Math. Phys. \textbf{44} (1998), 261--271.

\bibitem[Jim85]{Jimbo}
M.~Jimbo, \emph{{A q difference analog of U(g) and the Yang-Baxter equation}},
  Lett. Math. Phys. \textbf{10} (1985), 63--69.

\bibitem[JS10]{SchillingE6}
B.~Jones and A.~Schilling, \emph{Affine structures and a tableau model for
  $E_6$ crystals}, Journal of Algebra \textbf{324} (2010), no.~9, 2512 -- 2542.

\bibitem[Kni95]{Knight}
H.~Knight.
\newblock {Spectra of tensor products of finite dimensional representations of Yangians}.
\newblock {\em Journal of Algebra}, 174(1):187 -- 196, 1995.

\bibitem[LSS10]{LSS}
T.~Lam, A.~Schilling, and M.~Shimozono, \emph{{$K$}-theory {S}chubert
  calculus of the affine {G}rassmannian}, Compos. Math. \textbf{146} (2010),
  no.~4, 811--852. \MR{2660675}






\bibitem[Lus90]{Lusztig}
G.~Lusztig, \emph{Canonical bases arising from quantized enveloping algebras}, J. Amer. Math. Soc.
\textbf{3} (1990) 447--498.


\bibitem[MY12a]{MY1}
E.~Mukhin, C.~A.~S. Young, \emph{{Path description of type B
  $q$-characters}},
Adv. Math. \textbf{231} (2012), no 2., 1119--1150. 

\bibitem[MY12b]{MY2}
E.~Mukhin, C.~A.~S.~Young, 
\emph{Extended T-systems},
Selecta Math. \textbf{18} (2012), no 3., 591--631, doi: 10.1007/s00029-011-0083-x

\bibitem[MY12c]{MY12}
E.~Mukhin, C.~A.~S.~Young, 
\emph{Affinization of category $\cat$ for quantum groups},
Trans. Amer. Math. Soc. \textbf{366} (2014) 
4815--4847. doi: 10.1090/S0002-9947-2014-06039-x


\bibitem[Nak01]{Nakajima1}
H.~Nakajima, \emph{{Quiver varieties and finite dimensional representations of
  quantum affine algebras }}, J. Amer. Math. Soc. \textbf{14} ({2001}),
  145--238.

\bibitem[Nak03]{NakajimaKR}
H.~Nakajima, \emph{{$t$-analogs of $q$-characters of Kirillov-Reshetikhin modules
  of quantum affine algebras}}, Represent. Theory \textbf{7} (2003), 259--274.

\bibitem[Nak04]{Nakajima3}
H.~Nakajima, \emph{{Quiver varieties and $t$-analogs of $q$-characters of quantum affine algebras}}, Annals of mathematics \textbf{160} ({2004}), no.~3,
  1057--1097.

\bibitem[Nak11]{NakajimaCluster}
H.~Nakajima, \emph{Quiver varieties and cluster algebras},
Kyoto J. Math. \textbf{51} no. 1 (2011), 71--126. 



 
\bibitem[Tho04]{Thoren}
J.~Thoren, \emph{Finite-dimensional modules of quantum affine algebras},
J. Algebra \textbf{272} (2004) 581--613.



\bibitem[VV02]{VV}
M.~Varagnolo and E.~Vasserot, \emph{{Standard modules of quantum affine
  algebras}}, Duke Math. J. \textbf{111} ({2002}), no.~3, 509--533.

\bibitem[YZ11]{YoungZegers}
C.~A.~S.~Young and R.~Zegers, \emph{On $q,t$-characters and the $\ell$-weight Jordan filtration of standard $\uqslt$-modules},
Int. Math. Res. Not. \textbf{2010} (2012) no.~12, 2179--2211.

\end{thebibliography}
\end{document}